\documentclass[a4paper,reqno, 12pt]{amsart}
\usepackage[left=2.8cm,right=2.8cm,top=2.86cm,bottom=2.86cm]{geometry}
\usepackage{amsmath,amssymb,latexsym,esint,cite,mathrsfs}
\usepackage{amsmath}
\usepackage{amssymb}
\usepackage{latexsym}
\usepackage{amsthm}
\usepackage{hyperref}
\usepackage{array}
\usepackage{color}
\hypersetup{linkcolor=blue, colorlinks=true ,citecolor = red}
\DeclareMathOperator*{\essinf}{ess\,inf}
\DeclareMathOperator*{\esssup}{ess\,sup}
\DeclareMathOperator*{\loc}{loc}
\newcolumntype{M}[1]{>{\centering\arraybackslash}m{#1}}
\newcolumntype{N}{@{}m{0pt}@{}}
\newcommand*\diff{\mathop{}\!\mathrm{d}}
\begin{document}
	\title[$p(x)$-Laplacian on the whole space $\mathbb{R}^N$]
	{Existence results for Schr\"odinger  $p(x)$-Laplace equations involving critical growth in  $\mathbb{R}^N$}
	
	\author
	{Ky Ho, Yun-Ho Kim and Inbo Sim}
	\address{Ky Ho \newline
		Department of Mathematics, Faculty of Sciences, Nong Lam University, Linh Trung Ward, Thu Duc District, Ho Chi Minh City, Vietnam}
	\email{hnky81@gmail.com}
	\address{Yun-Ho Kim\newline
		Department of Mathematics
		Education, Sangmyung University,
		Seoul, 110-743, Republic of Korea}
	\email{kyh1213@smu.ac.kr}
	\address{Inbo Sim \newline
		Department of Mathematics, University of Ulsan, Ulsan 44610, Republic of Korea}
	\email{ibsim@ulsan.ac.kr}
	
	\subjclass[2010]{35J62, 35B33, 35J20, 35J25, 35J70, 46E35, 47J10}
	\keywords{$p(x)$-Laplacian; weighted variable exponent
		Sobolev spaces; concentration-compactness principle}
	
	\begin{abstract}
		
		We establish some existence results for Schr\"odinger  $p(x)$-Laplace equations in 
		$\mathbb{R}^N$ with various potentials and critical growth of nonlinearity that may occur on some nonempty set, although not necessarily the whole space $\mathbb{R}^N$. The proofs are mainly based on concentration-compactness principles in a suitable weighted variable exponent Sobolev space and its imbeddings.

	\end{abstract}
	
	\maketitle
	\numberwithin{equation}{section}
	\newtheorem{theorem}{Theorem}[section]
	\newtheorem{lemma}[theorem]{Lemma}
	\newtheorem{proposition}[theorem]{Proposition}
	\newtheorem{corollary}[theorem]{Corollary}
	\newtheorem{definition}[theorem]{Definition}
	\newtheorem{example}[theorem]{Example}
	\newtheorem{remark}[theorem]{Remark}
	\allowdisplaybreaks
	
\section{Introduction}
In this paper, we study the existence and multiplicity of solutions to $p(x)$-Laplace equations involving critical growth in $\mathbb{R}^N$ of the form
\begin{equation}\label{eq}
-\operatorname{div}\left(| \nabla u| ^{p(x)-2}\nabla u\right)+V(x)|u|^{p(x)-2}u=\lambda a(x)|u|^{r(x)-2}u+\theta b(x)|u|^{q(x)-2}u, 
\end{equation}
where $p,r,q\in C_+(\mathbb{R}^N) :=\left\{ s\in C(\mathbb{R}^N) : 1<\underset{x\in\mathbb{R}^N}{\inf}s(x)\leq \underset{x\in\mathbb{R}^N}{\sup}s(x)<\infty \right\},$ such that  $p(x)<N,\ p(x)<q(x)\leq p^\ast(x):=\frac{Np(x)}{N-p(x)}$ for all $x\in \mathbb{R}^N;$ 
$V,a,b\in P_+(\mathbb{R}^N) := \{w : \mathbb{R}^N \to \mathbb{R}~ $ is measurable and positive a.e.$\};$ and $\lambda, \theta$ are positive real parameters.

The study of the $p(x)$-Laplacian,  $\operatorname{div}\left(| \nabla u| ^{p(x)-2}\nabla u\right),$ a natural extension of the $p$-Laplacian,  
$\operatorname{div}\left(| \nabla u| ^{p-2}\nabla u\right),$ which itself is also a natural extension of the Laplacian $(p=2),$
has been the center of attention recently since it has not only mathematical challenges (in particular, inhomogeneity) but also many applications, for examples, image processing \cite{Chen} and electrorheological fluids \cite{Ruzicka}. The Lebesgue-Sobolev spaces related to the $p(x)$-Laplacian are called variable exponent Lebesgue-Sobolev spaces and  were studied in \cite{Fan.MIA, Kovacik, Zhikov}. We also refer the reader to the book \cite{Diening} for basic knowledge in this area. 

Problem \eqref{eq} contains  the Schr\"odinger operator as a special case and can be reduced to  the time-independent modified Gross-Pitaevskii equation \cite{Gross, Pitaevskii } in the theory of Bose-Einstein condensates:
$$
-\operatorname{div}\left(\nabla u\right)+ V(x) u=\lambda a(x)|u|^{r-2}u+\theta b(x)|u|^{q-2}u\ \ ~\mbox{in}~\mathbb{R}^N,
$$
where 
$1<r <2^*, 2< q \le 2^*.$  We refer to Rabinowitz's seminal paper \cite{Ra}, which 
nicely explains nonlinear Schr\"odinger equations and has influenced many studies.
Since then there have been extensions of \cite{Ra} in many directions. Here we shall extend \cite{Ra} to Schr\"odinger  $p(x)$-Laplace equations with various potentials $V$. 

Fu-Zhang \cite{Fu-Zhang} considered 
\begin{equation}\label{fz}
-\operatorname{div}\left(| \nabla u| ^{p(x)-2}\nabla u\right)+ \lambda |u|^{p(x)-2}u= f(x,u) + h(x) |u|^{p^*(x)-2}u\ \  ~\mbox{in}~\mathbb{R}^N,
\end{equation}
where $f$ satisifies subcritical growth and 
 $h\in C(\mathbb{R}^N)\cap L^\infty(\mathbb{R}^N)$ is a nonnegative function. They obtained a concentration-compactness principle (for short, CCP) and applied this CCP to obtain the existence of solutions for \eqref{fz}. To avoid concentrations, they had to assume the radial symmetry of $f$ and $h$ with respect to the $x$ variable and that $h(0)=h(\infty)=0$. The CCP was originally provided by Lions \cite{Lions} for  critical problems  in a  bounded  domain  and  later  extended by  Chabrowski \cite{Chabrowski} for  critical problems in  an unbounded  domain. The variable exponents version for a bounded domain was independently obtained in \cite{Fu} and \cite{Bonder}.

Saintier-Silva \cite{Saintier} considered
\begin{equation}\label{ss}
-\operatorname{div}\left(| \nabla u| ^{p(x)-2}\nabla u\right)+ \lambda k(x) |u|^{p(x)-2}u= K(x) |u|^{q(x)-2}u\ \  ~\mbox{in}~\mathbb{R}^N,
\end{equation}
where $q(x)\leq p^*(x)$ for all $x\in\mathbb{R}^N$ with $p^+<q^-$ (see Section 2 for the definition) and assumed further that:
\begin{itemize}
	\item [$(SS1)$] $p$ and $q$ have a modulus of continuity $\rho$ in the sense that  for any $x, h \in \mathbb{R}^N,$ 
	$$p(x+h) =p(x) +\rho(h),~~\mbox{and}~~q(x+h) =q(x) +\rho(h)$$
	with $\lim_{h \to 0} \rho(h) \ln |h| =0;$
\end{itemize}
\begin{itemize}
	\item [$(SS2)$] there exist real numbers $p(\infty)$ and $q(\infty)$ with $q(\infty) = p^*(\infty)$ such that
	$$\lim_{|x| \to +\infty} |p(x) - p(\infty)| \ln |x| = 0~~\mbox{and}~~\lim_{|x| \to +\infty} |q(x) - q(\infty)| \ln |x| = 0;$$
\end{itemize}
\begin{itemize}
	\item [$(SS3)$] $k\in C(\mathbb{R}^N, [0,\infty))$ is bounded such that the imbedding from $\mathcal{D}^{1,p(x)}(\mathbb{R}^N)$ into $L^{p(x)}(k,\mathbb{R}^N)$ is compact, where $\mathcal{D}^{1,p(x)}(\mathbb{R}^N)$ is the completion of $C_c^\infty(\mathbb{R}^N)$ with respect to the norm $\|\nabla u\|_{L^{p(x)}(\mathbb{R}^N)}$;
\end{itemize}
\begin{itemize}
	\item [$(SS4)$] $K \in C(\mathbb{R}^N, [0,\infty))$ has a limit $K(\infty)$ (maynot necessarily 0) at infinity.
\end{itemize}
They first refined the CCP in \cite{Fu-Zhang} at infinity under the assumptions $(SS1)$ and $(SS2)$ and used this to provide a sufficient existence condition for problem \eqref{ss}. It is worth noting that they do not require that $K$ vanishes at  infinity, as $h$ does in  \cite{Fu-Zhang}, and also note that the critical set $\mathcal{A} := \{x \in \mathbb{R}^N : q(x)=p^\ast(x) \}$ may be empty or not. However, we believe that their existence result is incorrect, specifically with respect to the range of $\lambda$. This error stems from an incorrect assertion, which states that the compact imbedding  $\mathcal{D}^{1,p(x)}(\mathbb{R}^N)\hookrightarrow\hookrightarrow L^{p(x)}(k,\mathbb{R}^N)$ implies that the infimum $\lambda_*:=\underset{u\in\mathcal{D}^{1,p(x)}(\mathbb{R}^N)\setminus\{0\}}{\inf} \frac{\int_{\mathbb{R}^N}|\nabla u|^{p(x)}\diff x}{\int_{\mathbb{R}^N}|u|^{p(x)}k\diff x}$ is positive. It is known that in variable exponent Lebesgue-Sobolev spaces defined on a bounded main, such an infimum is zero if $p$ has a strict local mimimum (or maximum) (see \cite[Lemma 3.1 and Theorem 3.1]{Fan-Zhang-Zhao}). Similarly, we easily construct an example to show that $\lambda_*$ above can be zero. For instance, let $p\in C^1(\mathbb{R}^N)$ be such that $\underset{|x|\leq\frac{1}{4}}{\min}\ p(x)= p_0$ and $p(x)=p_1$ for $|x|\geq \frac{1}{2}$, where $p_0$ and $p_1$ are constants such that $1<p_0<p_1<N$ and let $k(x)=\frac{1}{\left(1+|x|\right)^\alpha}$ for some constant $\alpha\in (N,\infty).$ Then it is easy to verify that such $p$ and $k$ satisfy the conditions of \cite[Proposition 5.7]{Saintier} and hence, $\mathcal{D}^{1,p(x)}(\mathbb{R}^N)\hookrightarrow\hookrightarrow L^{p(x)}(k,\mathbb{R}^N)$ while $\lambda_*=0$, which can be easily shown as in the proof of \cite[Theorem 3.1]{Fan-Zhang-Zhao}.

We emphasize that the critical growth condition  $\mathcal{A}  \neq \emptyset $ distinguishs $p(x)$-Laplace problems from $p$-Laplace problems, which only allow  $\mathcal{A} = \mathbb{R}^N.$ Thus this condition is valuable for the study of  $p(x)$-Laplace problems. As far as the authors know, Mih\u{a}ilescu-R\u{a}dulescu \cite{MR}  first assumed   such a  characterizing condition to study the following problem with a bounded domain:   
\begin{eqnarray}\label{mr}
\begin{cases}
-\operatorname{div}\left(| \nabla u| ^{p(x)-2}\nabla u\right) = \lambda |u|^{q(x)-2}u  &\text{in}~ \Omega ,\\
u=0, &\text{on}~ \partial \Omega,
\end{cases}
\end{eqnarray}
where $p, q \in C(\overline{\Omega})$ and $q^- < p^- <q^+.$ The existence of a nontrivial solution was then obtained using the Ekeland variational method. Ho-Sim \cite{Ky1} generalized the condition $q^- < p^- <q^+$ to  $\{x \in \overline{\Omega} : q(x)< p(x) \} \neq \emptyset $ to study degenerate $p(x)$-Laplace equations involving concave-convex type nonlinearities  of  \eqref{eq}. The condition $\{x \in \mathbb{R}^N : q(x)=p^\ast(x) \} \neq \emptyset $ for a critical growth problem was first assumed in \cite{Bonder}. Another interesting aspect of critical $p(x)$-Laplace problems in $\mathbb{R}^N$ is the various conditions on the coefficient $V$ of the Schr\"odinger term. Except Saintier-Silva's result \cite{Saintier}, all existing results had to assume that  $\underset{x\in \mathbb{R}^N}{\essinf}\ V(x) >0.$ Even the potential $k$ in \cite{Saintier} with a parameter $\lambda>0$ does not require $\underset{x\in \mathbb{R}^N}{\essinf}\ k(x) >0,$ it seems to be nontrivial to find a $k$ which satisfies $(SS3)$. One of the novelties of this paper is that we assume $0<\underset{|x|<R}{\essinf}\ V(x)\leq\underset{|x|<R}{\esssup}\ V(x)<\infty$ for all $R>0$ and allow $\underset{x\in \mathbb{R}^N}{\essinf}\ V(x)$ to be zero by using only one additional natural assumption $|\{x\in\mathbb{R}^N:q(x)\ne p^\ast(x)\}|<\infty$, where $|\cdot|$ is the  Lebesgue measure in $\mathbb{R}^N.$ 
It is worth pointing out that we do not need to assume $q(\infty)=p^\ast(\infty)$ or some condition on the weights like symmetry or periodic or vanishing at infinity to deal with the possible loss of mass at infinity, as in existing results. Moreover, we shall also assume the concave term of the form $\{x \in \mathbb{R}^N : r(x)< p(x) \} \neq \emptyset $ for the existence result of  \eqref{eq}.

Pucci-Zhang \cite{Pucci-Zhang} considered 
\begin{equation}\label{pz}
-\operatorname{div}\left(| \nabla u| ^{p(x)-2}\nabla u\right)+V(x)|u|^{p(x)-2}u=\lambda a(x)|u|^{r(x)-2}u-h(x)|u|^{s(x)-2}u\ \  ~\mbox{in}~\mathbb{R}^N,
\end{equation}
where $\lambda$ is a parameter, $V \in L^{\infty}_{\rm{loc}}(\mathbb{R}^N),$ $V(x) \ge c (1+|x|)^{-p(x)}$ for all $x \in \mathbb{R}^N$ for some $c \in (0,1],$ $h\in P_+(\mathbb{R}^N),$ and $s\in C_+(\mathbb{R}^N)$ possibly supercritical.
Taking advantage of the  weighted variable exponent Sobolev space (which is similar to the approach in Section 2), as well as the effects of $a$ and $h,$ they obtained several existence results of nontrivial nonnegative solutions for problem \eqref{pz}.


The goal of this paper is to remove the restrictions on $p$ and $q$, such as $(SS1)$ and $(SS2)$, as well as on $b$ (which corresponds to $K$ in \eqref{ss}) such as $(SS4)$. We do this in order to study \eqref{eq} with various types of potentials $V$ by adopting a weighted variable exponent Sobolev space, as motivated in \cite{Pucci-Zhang} and establishing two CCPs on this space, as motivated in \cite{Fu-Zhang} and \cite{Saintier}. Note that we only assume the combined effects of $a$ and $b$ to obtain the existence result for \eqref{eq} when the nonlinearity is of the concave-convex type. Meanwhile we do not assume any combined conditions on $a$ and $b$ when the nonlinearity is of the $p(\cdot)$-superlinear type.

On the other hand, Bartsch-Wang  \cite{Bartsch} used such a weighted Sobolev space to study the Schr\"odinger equation with a coercive potential $V$, that is,  $V(x) \to \infty$ as $|x| \to \infty$ to overcome the lack of compactness. We realized that this type of potential also works well for establishing the CCPs on the weighted variable exponent Sobolev space. Thus, another novelty of our paper is that we consider Schr\"odinger  $p(x)$-Laplace equations in the whole space $\mathbb{R}^N$ with the several kinds of potentials $V$, including bounded, decaying and coercive potentials.

For the existence results, we consider two cases: one is  $p(\cdot)$-superlinear and the other is concave-convex in the nonlinearity term of \eqref{eq}. Garc\'{\i}a Azorero-Peral Alonso \cite{Garcia} initially studied problem \eqref{eq}  in a bounded domain, which is subject to  Dirichlet boundary conditions with the constant exponents and $V \equiv 0$, 
$a=b \equiv 1$ for a fixed  $\theta,$ {\it{i.e,}} $\theta \equiv 1.$ Ambrosetti-Brezis-Cerami \cite{Ambrosetti2} then provided a  nice study for the Laplacian when the nonlinearity is concave-convex, that is, $r<2<q$ and  $\theta \equiv 1.$ Since then there have been extensions of \cite{Garcia, Ambrosetti2} for problems with both constant exponents and variable exponents (for example, \cite{de Figueiredo, Bonder, Ky2}).
Among them, Ho-Sim \cite{Ky2} first considered degenerate $p(x)$-Laplacian problems  involving critical growth in a bounded domain with two parameters, as in \eqref{eq}, and found the range of an unfixed parameter to obtain the existence and multiplicity results of  \eqref{eq} for  the case of $p(\cdot)$-superlinear and concave-convex nonlinearity according to a fixed parameter. Thus this paper gives not only a unified way to deal with  the Schr\"odinger  $p(x)$-Laplace equation with various potentials under weaker assumptions on $p, q, b$, the characterizing condition  $\mathcal{A}  \neq \emptyset ,$ and $\{x \in \mathbb{R}^N : r(x)< p(x) \} \neq \emptyset $ for the concave term, but also new results for each of potentials above.

The paper is organized as follows. In Section 2, we define weighted variable exponent Lebesgue-Sobolev spaces and  list basic properties of them, which will be used in the next sections. In Section 3, we state and prove two concentration-compactness principles for weighted variable exponent spaces defined on $\mathbb{R}^N$, which are essential to overcome the lack of compactness of the Sobolev imbeddings in the whole space.
The last section is devoted to the existence and multiplicity of solutions to problem \eqref{eq} using the Mountain Pass Theorem, Ekeland variational principle and genus theory. In the Appendices, we give proofs of the Lemmas for imbeddings, which are crucial for obtaining useful inequalities.

\section{Abstract framework and preliminary results}\label{Preliminaries}
Let $\Omega$ be a  (bounded or unbounded) domain in $\mathbb{R}^N$.
For $w\in P_+(\Omega)$ and  $p\in C_{+}(\overline{\Omega}), $  we define
\[
L^{p(x) }(w,\Omega ) =\big\{  u:\Omega
\to\mathbb{R}\text{ is measurable, }\int_{\Omega }w(x)| u(x)| ^{p(x) }\diff x<\infty\big\} .
\]
Then, $L^{p(x) }(w,\Omega ) $ is a normed space with the norm
\[
| u| _{L^{p(x)}(w,\Omega )}=\inf \big\{ \lambda >0:\int_{\Omega }w(x)\Big| \frac{u(x) }{\lambda }
\Big| ^{p(x) }\diff x\leq 1\big\}.
\]
When $w(x)\equiv 1$, we use the notation $L^{p(x)}(\Omega )$ and $| u| _{L^{p(x)}(\Omega )}$ instead of $L^{p(x) }(w,\Omega )$ and $| u| _{L^{p(x)}(w,\Omega )}$, respectively. Set $p^{-}:=\inf_{x\in \overline{\Omega} } p(x) ,\  p^{+}:=\sup_{x\in \overline{\Omega}} p(x),$ and $L_+^{p(x)}(\Omega ):=L^{p(x)}(\Omega )\cap P_+(\Omega).$

We now state basic properties of weighted variable exponent Lebesgue spaces.

\begin{proposition}[\cite{Diening,Kovacik}]\label{2.prop1}
	The space $L^{p(x) }(\Omega )$ is a separable and uniformly convex Banach space, and its conjugate space is $L^{p'(x) }(\Omega ),$ where  $1/p(x)+1/p'(x)=1$. For any $u\in L^{p(x)}(\Omega)$ and $v\in L^{p'(x)}(\Omega)$, we have the following H\"older type inequality:
	\begin{equation*}
	\left|\int_\Omega uv\,\diff x\right|\leq\left(\frac{1}{p^-}+
	\frac{1}{{(p')}^-}\right)|u|_{L^{p(x)}(\Omega )}|v|_{L^{p'(x)}(\Omega )}\leq\ 2 |u|_{L^{p(x)}(\Omega )}|v|_{L^{p'(x)}(\Omega )}.
	\end{equation*}
\end{proposition}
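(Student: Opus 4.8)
The plan is to verify the assertions in turn, working throughout with the modular $\rho(u):=\int_\Omega|u(x)|^{p(x)}\diff x$ and its standard relations with the Luxemburg norm $|u|_{L^{p(x)}(\Omega)}$ (such as: $|u|_{L^{p(x)}(\Omega)}\le1$ implies $\rho(u)\le|u|_{L^{p(x)}(\Omega)}$, while $\rho(u)\le1$ implies $|u|_{L^{p(x)}(\Omega)}\le1$, together with their counterparts for norms $\ge1$), all of which follow at once from the definition of the infimum and the continuity and monotonicity of $\lambda\mapsto\rho(u/\lambda)$. Homogeneity and definiteness of the norm are clear, and the triangle inequality follows from the convexity of $t\mapsto t^{p(x)}$ applied to $\frac{u+v}{\mu+\nu}=\frac{\mu}{\mu+\nu}\frac{u}{\mu}+\frac{\nu}{\mu+\nu}\frac{v}{\nu}$ with $\mu=|u|_{L^{p(x)}(\Omega)}$, $\nu=|v|_{L^{p(x)}(\Omega)}$, which gives $\rho\big(\frac{u+v}{\mu+\nu}\big)\le1$ and hence $|u+v|_{L^{p(x)}(\Omega)}\le\mu+\nu$. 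For completeness, pass to a subsequence with $|u_{k+1}-u_k|_{L^{p(x)}(\Omega)}\le2^{-k}$, control $\sum_k\rho(u_{k+1}-u_k)$ by the norm--modular relations, extract an a.e. limit $u$ by a telescoping argument, and identify $u$ as the norm limit via Fatou's lemma for $\rho$.

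For separability, truncation together with dominated convergence for $\rho$ shows the bounded simple functions supported on sets of finite measure are dense, and approximating their level sets by a fixed countable generating algebra and their values by rationals yields a countable dense subset. For the H\"older inequality, apply Young's inequality $st\le\frac{s^{p(x)}}{p(x)}+\frac{t^{p'(x)}}{p'(x)}$ pointwise with $s=|u(x)|/|u|_{L^{p(x)}(\Omega)}$ and $t=|v(x)|/|v|_{L^{p'(x)}(\Omega)}$ (the degenerate cases where a norm vanishes being trivial), and integrate over $\Omega$ to obtain $\frac{1}{|u|_{L^{p(x)}(\Omega)}|v|_{L^{p'(x)}(\Omega)}}\int_\Omega|uv|\diff x\le\frac{1}{p^-}+\frac{1}{(p')^-}\le2$, which rearranges to the claim.

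For the conjugate space, the pairing $v\mapsto\big(u\mapsto\int_\Omega uv\diff x\big)$ maps $L^{p'(x)}(\Omega)$ into $\big(L^{p(x)}(\Omega)\big)^\ast$ and is bounded by H\"older; testing against a normalized multiple of $u=|v|^{p'(x)-2}v$ gives a matching lower bound, so this map is an isomorphism onto its image with norm comparable to $|\cdot|_{L^{p'(x)}(\Omega)}$. Surjectivity is the substantive point: given $F\in\big(L^{p(x)}(\Omega)\big)^\ast$, apply the Radon--Nikodym theorem to the countably additive set function $E\mapsto F(\chi_E)$ on subsets of finite measure to produce a density $v$, verify $v\in L^{p'(x)}$ on each piece by testing $F$ against truncations of $|v|^{p'(x)-2}v$, and assemble the pieces along an exhaustion of $\Omega$ by sets of finite measure.

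The main obstacle is uniform convexity. The efficient route is to first prove uniform convexity of the modular --- for every $\varepsilon>0$ there is $\delta>0$ with $\rho(u)\le1$, $\rho(v)\le1$, $\rho\big(\frac{u-v}{2}\big)\ge\varepsilon$ forcing $\rho\big(\frac{u+v}{2}\big)\le1-\delta$ --- which rests on a pointwise Clarkson-type inequality $\big|\frac{a+b}{2}\big|^{t}+c_t\big|\frac{a-b}{2}\big|^{t}\le\frac{|a|^{t}+|b|^{t}}{2}$ with a constant $c_t$ bounded below as $t$ ranges over the compact interval $[p^-,p^+]\subset(1,\infty)$, combined with splitting $\Omega$ into the region where $|u-v|$ is comparable to $|u|+|v|$ and its complement. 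One then transfers uniform convexity of the modular to uniform convexity of the norm using the norm--modular inequalities; this transfer is itself slightly delicate because the unit balls for the two notions do not coincide. I expect the surjectivity of the duality map and this modular-to-norm passage to demand the most care, while the remaining steps are routine modular bookkeeping.
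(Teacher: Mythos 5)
This proposition is not proved in the paper at all: it is quoted verbatim from the literature (\cite{Diening,Kovacik}), so there is no in-paper argument to compare yours against. What you have written is a reconstruction of the standard textbook proofs, and most of it is sound: the norm axioms and completeness via the modular, separability via simple functions, the H\"older inequality via pointwise Young's inequality and $\rho(u/|u|_{L^{p(x)}(\Omega)})\le 1$, and the identification of the dual via Radon--Nikodym on an exhaustion by sets of finite measure (note that your duality map is only an isomorphism with norm \emph{equivalent} to $|\cdot|_{L^{p'(x)}(\Omega)}$, not an isometry, which is consistent with the constant $2$ in the H\"older inequality; all of this uses $1<p^-\le p^+<\infty$, guaranteed by $p\in C_+(\overline\Omega)$).

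The one genuine flaw is the displayed Clarkson-type inequality in your treatment of uniform convexity. The pointwise bound $\bigl|\frac{a+b}{2}\bigr|^{t}+c_t\bigl|\frac{a-b}{2}\bigr|^{t}\le\frac{|a|^{t}+|b|^{t}}{2}$ with $c_t>0$ is \emph{false} for $1<t<2$: taking $a=1+s$, $b=1-s$ gives a left side $1+c_ts^{t}$ and a right side $1+\tfrac{t(t-1)}{2}s^{2}+O(s^{4})$, and $s^{t}\gg s^{2}$ as $s\to0^+$. It holds (with $c_t=1$) only for $t\ge2$. For exponents below $2$ the correct substitute is precisely the restricted inequality you allude to with your ``splitting'': on the set where $|u-v|\ge\varepsilon(|u|+|v|)$ one has $\bigl|\frac{u+v}{2}\bigr|^{t}\le(1-\delta)\frac{|u|^{t}+|v|^{t}}{2}$ with $\delta=\delta(\varepsilon,p^-,p^+)>0$, while on the complement the contribution of $\rho\bigl(\frac{u-v}{2}\bigr)$ is controlled directly by $\varepsilon$ and $\rho(u)+\rho(v)$. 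So the mechanism you invoke in the second half of that sentence is the right one, but the unrestricted pointwise inequality you state as the foundation is not; as written the argument for $p^-<2$ does not go through until that inequality is replaced by its restricted form. The subsequent passage from modular to norm uniform convexity is standard and unproblematic since $\{|u|_{L^{p(x)}(\Omega)}\le1\}=\{\rho(u)\le1\}$ and $|w|_{L^{p(x)}(\Omega)}^{p^+}\le\rho(w)\le|w|_{L^{p(x)}(\Omega)}^{p^-}$ on the unit ball.
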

Define the modular $\rho :L^{p(x) }(w,\Omega )$ $ \to \mathbb{R}$ as
\[
\rho (u) =\int_{\Omega }w(x)| u(x)| ^{p(x) }\diff x,\quad
\forall u\in L^{p(x) }(w,\Omega ) .
\]
The relationship between the modular and the corresponding norm is as follows.
\begin{proposition}[\cite{Kim}] \label{2.prop2}
	For all $u\in L^{p(x) }(w,\Omega ),$  we have
	\begin{itemize}
		\item[(i)] $|u|_{L^{p(x)}(w,\Omega )}<1$ $(=1,>1)$
		if and only if \  $\rho (u) <1$ $(=1,>1)$, respectively;
		
		\item[(ii)] If \  $|u|_{L^{p(x)}(w,\Omega )}>1,$ then  $|u|^{p^{-}}_{L^{p(x)}(w,\Omega )}\leq \rho (u) \leq |u|_{L^{p(x)}(w,\Omega )}^{p^{+}}$;
		\item[(iii)] If \ $|u|_{L^{p(x)}(w,\Omega )}<1,$ then $|u|_{L^{p(x)}(w,\Omega )}^{p^{+}}\leq \rho
		(u) \leq |u|_{L^{p(x)}(w,\Omega )}^{p^{-}}$.
	\end{itemize}
	Consequently,
	$$|u|_{L^{p(x)}(w,\Omega )}^{p^{-}}-1\leq \rho (u) \leq |u|_{L^{p(x)}(w,\Omega )}^{p^{+}}+1,\ \forall u\in L^{p(x) }(w,\Omega ).$$
\end{proposition}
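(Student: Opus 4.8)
The plan is to reduce all three items to a single auxiliary fact: for $u\neq 0$ the infimum defining the Luxemburg norm is attained. Fix $u\in L^{p(x)}(w,\Omega)$ with $u\neq 0$, put $t:=|u|_{L^{p(x)}(w,\Omega)}$ and, for $\lambda>0$, let $\sigma(\lambda):=\int_{\Omega}w(x)\,|u(x)/\lambda|^{p(x)}\diff x$ (an extended–real–valued quantity), so that $t=\inf\{\lambda>0:\sigma(\lambda)\le 1\}$ and $\rho(u)=\sigma(1)$. First I would record that $\lambda\mapsto\sigma(\lambda)$ is nonincreasing, that $\sigma(\lambda)\le\lambda^{-p^-}\rho(u)$ for $\lambda\ge 1$ (hence $\sigma(\lambda)<\infty$ for $\lambda\ge 1$ and $\sigma(\lambda)\to 0$ as $\lambda\to\infty$, so in particular $t<\infty$ because $\rho(u)<\infty$). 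Then $\sigma(t)\le 1$ follows by picking $\lambda_n\downarrow t$ with $\sigma(\lambda_n)\le 1$ and applying the monotone convergence theorem to the increasing integrands $w(x)|u(x)/\lambda_n|^{p(x)}\uparrow w(x)|u(x)/t|^{p(x)}$. Consequently $\{\lambda>0:\sigma(\lambda)\le 1\}=[t,\infty)$; in particular $\sigma(\lambda)>1$ for every $\lambda\in(0,t)$, and $\sigma(1)=\rho(u)>1$ when $t>1$.

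Granting this, the rest is bookkeeping with the elementary inequalities $s^{p^-}\le s^{p(x)}\le s^{p^+}$ for $s\ge 1$ and their reversal for $0<s\le 1$. For (ii), assume $t>1$: from $\sigma(t)\le 1$, i.e. $\int_\Omega w\,t^{-p(x)}|u|^{p(x)}\diff x\le 1$, and $t^{-p(x)}\ge t^{-p^+}$ one gets $\rho(u)\le t^{p^+}$; and for $1<\lambda<t$, $\sigma(\lambda)>1$ together with $\lambda^{-p(x)}\le\lambda^{-p^-}$ gives $\rho(u)>\lambda^{p^-}$, so $\rho(u)\ge t^{p^-}$ on letting $\lambda\uparrow t$. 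Item (iii) is the mirror image: for $t<1$, $\sigma(t)\le 1$ with $t^{-p(x)}\ge t^{-p^-}$ gives $\rho(u)\le t^{p^-}$, and for $t<\lambda<1$, $\sigma(\lambda)>1$ with $\lambda^{-p(x)}\le\lambda^{-p^+}$ gives $\rho(u)>\lambda^{p^+}$, hence $\rho(u)\ge t^{p^+}$ on letting $\lambda\downarrow t$. For (i), the implications $t<1\Rightarrow\rho(u)<1$ and $t>1\Rightarrow\rho(u)>1$ are contained in (iii) and in $\rho(u)=\sigma(1)$ respectively, while $t=1$ forces $\rho(u)=\sigma(1)\le 1$ and, from $\rho(u)>\lambda^{p^+}$ for $0<\lambda<1$ followed by $\lambda\uparrow 1$, also $\rho(u)\ge 1$; the three converse implications are then automatic since both triples of alternatives exhaust and partition their ranges. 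Finally the displayed two-sided bound follows by cases: if $t\ge 1$ combine (i)–(ii) to get $|u|^{p^-}_{L^{p(x)}(w,\Omega)}-1\le\rho(u)\le|u|^{p^+}_{L^{p(x)}(w,\Omega)}+1$, and if $t<1$ use (iii) with $t^{p^-}\le 1$ and $t^{p^+}\ge 0$; the case $u=0$ is trivial, every assertion holding with both sides zero.

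The only point that is not purely formal, and the one I would write out with care, is the attainment $\sigma(t)\le 1$: it is exactly what licenses inserting the value $\lambda=t$ into the modular estimates (as opposed to merely $\lambda>t$, which would only give the lower bounds), and it is where the hypothesis $u\in L^{p(x)}(w,\Omega)$ — through $\rho(u)<\infty$, which forces $t<\infty$ and $\sigma(\lambda)<\infty$ for $\lambda\ge 1$ — together with the monotone convergence theorem genuinely enters. Everything after that is the systematic use of $s^{p^-}\le s^{p(x)}\le s^{p^+}$ for $s\ge 1$ and its reversal for $s\le 1$.
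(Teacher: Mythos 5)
Your proof is correct. Note that the paper does not prove this proposition at all: it is quoted from the reference [Kim] (and is the standard Luxemburg-norm/modular relation from the variable exponent literature), so there is no in-paper argument to compare against. Your route --- establishing that the infimum defining the Luxemburg norm is attained, i.e.\ $\sigma(t)\leq 1$ via monotone convergence, and then squeezing $\rho(u)$ between $\lambda^{p^-}$ and $\lambda^{p^+}$ powers using $\sigma(\lambda)>1$ for $\lambda<t$ --- is exactly the standard proof, and you correctly identify the attainment step as the only non-formal point. The one detail you leave implicit is that $t>0$ when $u\neq 0$ (needed so that $\sigma(t)$ makes sense), but this follows from the same monotone convergence argument since $w>0$ a.e., so it is a cosmetic rather than substantive omission.
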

Thus, modular convergence is equivalent to norm convergence.
\begin{proposition} \label{2.prop3}
	If $u,u_n\in L^{p(x) }(w,\Omega )~  (n=1,2,\cdots),$ then the
	following statements are equivalent:
	\begin{itemize}
		\item[(i)] $\lim_{n\to \infty }|u_n-u|_{L^{p(x)}(w,\Omega )}=0$;
		
		\item[(ii)] $\lim_{n\to \infty }\rho (u_n-u)=0$.
		
	\end{itemize}
\end{proposition}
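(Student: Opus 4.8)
The plan is to reduce the statement to Proposition \ref{2.prop2}. Put $v_n := u_n - u$, which belongs to $L^{p(x)}(w,\Omega)$ since this space is linear; then assertions (i) and (ii) become, respectively, $|v_n|_{L^{p(x)}(w,\Omega)} \to 0$ and $\rho(v_n) \to 0$. I would not use the consequence inequality $|v_n|_{L^{p(x)}(w,\Omega)}^{p^-} - 1 \le \rho(v_n) \le |v_n|_{L^{p(x)}(w,\Omega)}^{p^+} + 1$ directly, because near zero the additive constants make it useless; instead I would invoke the sharp multiplicative estimates of parts (i) and (iii) of Proposition \ref{2.prop2}, which apply precisely when the norm (resp.\ the modular) is below $1$ — and convergence to $0$ of either quantity forces it to be $<1$ for all large $n$.

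For the implication (i) $\Rightarrow$ (ii): assuming $|v_n|_{L^{p(x)}(w,\Omega)} \to 0$, we have $|v_n|_{L^{p(x)}(w,\Omega)} < 1$ for all large $n$, and then part (iii) of Proposition \ref{2.prop2} gives $\rho(v_n) \le |v_n|_{L^{p(x)}(w,\Omega)}^{p^-}$; since $p^- \in (1,\infty)$ is finite, the right-hand side tends to $0$, hence $\rho(v_n) \to 0$.

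For the implication (ii) $\Rightarrow$ (i): assuming $\rho(v_n) \to 0$, we have $\rho(v_n) < 1$ for all large $n$, so by part (i) of Proposition \ref{2.prop2} also $|v_n|_{L^{p(x)}(w,\Omega)} < 1$ for all large $n$; part (iii) then yields $|v_n|_{L^{p(x)}(w,\Omega)}^{p^+} \le \rho(v_n)$, that is $|v_n|_{L^{p(x)}(w,\Omega)} \le \rho(v_n)^{1/p^+} \to 0$, since $1/p^+ > 0$.

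The argument is short and essentially mechanical, so there is no genuine obstacle; the only point needing a little care is to avoid the additive-constant comparison and instead apply the multiplicative bounds (iii), which are stated only for quantities smaller than $1$ — a restriction that is harmless here because in each direction the relevant quantity is eventually $<1$ by the hypothesised convergence together with part (i) of Proposition \ref{2.prop2}.
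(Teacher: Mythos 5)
Your argument is correct and is exactly the route the paper intends: the paper states Proposition~\ref{2.prop3} as an immediate consequence of the norm--modular relations in Proposition~\ref{2.prop2}, and your use of parts (i) and (iii) (the multiplicative bounds valid below $1$, rather than the additive-constant consequence) is the standard way to make that implication precise. No gaps.
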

We  also need the following relations of norms in different variable exponent spaces.
\begin{proposition} [\cite{Sim-Kim}] \label{2.Sim-Kim}
	Let $p\in C_{+}(\overline{\Omega })$ and $q\in C(\overline{\Omega })$ such that $pq\in C_{+}(\overline{\Omega })$. For all $u\in L^{p(x)q(x) }(w,\Omega ),$  we have
	\begin{itemize}
		
		\item[(i)] If  $|u|_{L^{p(x)q(x) }(w,\Omega )}\geq 1,$ then  $|u|_{L^{p(x)q(x) }(w,\Omega )}^{q^-}\leq \big||u|^q\big|_{L^{p(x) }(w,\Omega )}\leq |u|_{L^{p(x)q(x) }(w,\Omega )}^{q^+}$;
		\item[(ii)] If  $|u|_{L^{p(x)q(x) }(w,\Omega )}<1,$ then  $|u|_{L^{p(x)q(x) }(w,\Omega )}^{q^+}\leq \big||u|^q\big|_{L^{p(x) }(w,\Omega )}\leq |u|_{L^{p(x)q(x) }(w,\Omega )}^{q^-}$.
	\end{itemize}
	Consequently, we have
	$$|u|_{L^{p(x)q(x) }(w,\Omega )}^{q^{-}}-1\leq \big||u|^q\big|_{L^{p(x) }(w,\Omega )}\leq |u|_{L^{p(x)q(x) }(w,\Omega )}^{q^{+}}+1.$$
\end{proposition}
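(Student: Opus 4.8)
The plan is to reduce everything to a comparison of modulars and then invoke Proposition~\ref{2.prop2}, applied once with the exponent $p(\cdot)q(\cdot)$ in place of $p(\cdot)$ (legitimate since $pq\in C_+(\overline{\Omega})$) and once with $p(\cdot)$ itself. Write $\mu:=|u|_{L^{p(x)q(x)}(w,\Omega )}$ and put $v:=|u|^{q}$, so that the quantity to be estimated is $|v|_{L^{p(x)}(w,\Omega )}$. First I would dispose of the trivial case $u=0$ a.e. (both sides vanish, using $q^->0$). Otherwise, since the Luxemburg norm is genuinely homogeneous, $|u/\mu|_{L^{p(x)q(x)}(w,\Omega )}=1$, so Proposition~\ref{2.prop2}(i) applied to $u/\mu$ with exponent $pq$ yields the key identity
\[
\int_{\Omega}w(x)\Big(\frac{|u(x)|}{\mu}\Big)^{p(x)q(x)}\diff x=1 .
\]
I would also record at the outset that $0<q^{-}\le q^{+}<\infty$, which holds because $q=pq/p$ with $1<(pq)^{-}\le(pq)^{+}<\infty$ and $1<p^{-}\le p^{+}<\infty$.

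For part (i), assume $\mu\ge 1$. For the upper bound it suffices, by the definition of the Luxemburg norm, to verify $\int_{\Omega}w(x)(v(x)/\mu^{q^{+}})^{p(x)}\diff x\le 1$; since $\mu\ge 1$ and $q^{+}p(x)\ge q(x)p(x)$ we have $\mu^{q^{+}p(x)}\ge\mu^{q(x)p(x)}$, so this integral is at most $\int_{\Omega}w(x)(|u(x)|/\mu)^{p(x)q(x)}\diff x=1$, giving $|v|_{L^{p(x)}(w,\Omega )}\le\mu^{q^{+}}$. For the lower bound, the same computation with $q^{-}$ in place of $q^{+}$ uses the reversed inequality $\mu^{q^{-}p(x)}\le\mu^{q(x)p(x)}$ to produce $\int_{\Omega}w(x)(v(x)/\mu^{q^{-}})^{p(x)}\diff x\ge 1$; then Proposition~\ref{2.prop2}(i), now for $L^{p(x)}(w,\Omega )$, forces $|v/\mu^{q^{-}}|_{L^{p(x)}(w,\Omega )}\ge 1$, i.e.\ $|v|_{L^{p(x)}(w,\Omega )}\ge\mu^{q^{-}}$.

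Part (ii) is handled in exactly the same way, with the roles of $q^{-}$ and $q^{+}$ interchanged: for $0<\mu<1$ the map $t\mapsto\mu^{t}$ is decreasing, so $\mu^{q^{-}p(x)}\ge\mu^{q(x)p(x)}\ge\mu^{q^{+}p(x)}$, which gives $\int_{\Omega}w(x)(v/\mu^{q^{-}})^{p(x)}\diff x\le 1\le\int_{\Omega}w(x)(v/\mu^{q^{+}})^{p(x)}\diff x$ and hence $\mu^{q^{+}}\le|v|_{L^{p(x)}(w,\Omega )}\le\mu^{q^{-}}$. Finally, the ``consequently'' estimate is obtained by merging the two cases: when $\mu\ge 1$ it is weaker than (i) because $\mu^{q^{-}}-1\le\mu^{q^{-}}$ and $\mu^{q^{+}}\le\mu^{q^{+}}+1$; when $\mu<1$ one uses $\mu^{q^{-}}<1$ (valid since $q^{-}>0$) so that $\mu^{q^{-}}-1<0\le|v|_{L^{p(x)}(w,\Omega )}$ handles the lower bound, while $|v|_{L^{p(x)}(w,\Omega )}\le\mu^{q^{-}}\le 1\le\mu^{q^{+}}+1$ handles the upper bound.

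I do not expect a serious obstacle here; the one point that genuinely needs care is the unit‑modular identity $\int_{\Omega}w(x)(|u(x)|/\mu)^{p(x)q(x)}\diff x=1$ — that the modular actually \emph{equals}, rather than merely is $\le$, $1$ on the unit sphere — because the two lower bounds truly require ``$\ge 1$'', and this is precisely where homogeneity of the Luxemburg norm together with Proposition~\ref{2.prop2}(i) enters. Everything else is bookkeeping of the inequalities between $\mu^{q^{\pm}p(x)}$ and $\mu^{q(x)p(x)}$, whose direction flips at $\mu=1$.
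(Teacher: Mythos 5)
Your proof is correct. Note that the paper itself offers no proof of this proposition --- it is quoted from \cite{Sim-Kim} --- so there is nothing to diverge from; your argument is the standard one: reduce to the unit sphere via homogeneity of the Luxemburg norm, use the unit-ball property of Proposition~\ref{2.prop2}(i) to get the exact identity $\int_{\Omega}w\,(|u|/\mu)^{p(x)q(x)}\,\diff x=1$ (which, as you rightly flag, is the only place where ``$=1$'' rather than ``$\le 1$'' is genuinely needed, namely for the two lower bounds), and then compare $\mu^{q^{\pm}p(x)}$ with $\mu^{q(x)p(x)}$ according to whether $\mu\ge 1$ or $\mu<1$. The bookkeeping in the ``consequently'' step and the observation that $0<q^-\le q^+<\infty$ follows from $p,pq\in C_+(\overline{\Omega})$ are both handled correctly.
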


\begin{remark}\label{2.measureable.spaces}
	The above properties of the norm and the modular are still true for any $L_\mu^{p(x) }(\Omega ) =\{  u:\Omega
	\to\mathbb{R}\text{ is $\mu$-measurable, }\int_{\Omega }| u(x)| ^{p(x) }\diff\mu<\infty \}$ with corresponding modular $\rho(u)=\int_{\Omega }| u(x)| ^{p(x) }\diff\mu,$ where $\mu$ is any measure on $\Omega$ (see \cite{Diening}).
\end{remark}
For $V\in P_+(\Omega)$ and  $p\in C_{+}(\overline{\Omega}), $  the weighted variable exponent Sobolev space $W_V^{1,p(x)}(\Omega) $ is defined by
\[
W_V^{1,p(x)}(\Omega ) =\{u\in L^{p(x) }(V,\Omega) :
|\nabla u|\in L^{p(x) }(\Omega ) \},
\]
with the norm
\[
\|u\|_{W_V^{1,p(x)}(\Omega )}=|u|_{L^{p(x)}(V,\Omega )}+\big||\nabla u|\big|_{L^{p(x)}(\Omega )}.
\]
As before, when $V(x)\equiv 1$, we write $W^{1,p(x)}(\Omega )$ and  $\|u\|_{W^{1,p(x)}(\Omega )}$ in place of $W_V^{1,p(x)}(\Omega )$ and $\|u\|_{W_V^{1,p(x)}(\Omega )}$, respectively. 

In the case of a bounded Lipschitz domain $\Omega$, it is well known that $W^{1,p(x)}(\Omega)\hookrightarrow\hookrightarrow L^{q(x)}(\Omega)$ for any $q\in C_+(\overline{\Omega})$ with $q(x) < p^{\ast }(x)$ for all $x\in\overline{\Omega}.$ Moreover, if $p^+<N$ and $p$ is log-H\"older continuous in $\overline{\Omega}$, namely,
$$-|p(x)-p(y)|\log|x-y|\leq C,\quad \forall x,y\in\overline{\Omega}\ \ \text{with}\ \ |x-y|<\frac{1}{2},$$
then we have the critical imbedding $W^{1,p(x)}(\Omega) \hookrightarrow L^{p^\ast(x) }(\Omega )$ (see \cite{Diening}). 

For an unbounded domain with no weights, we recall the following imbedding.
\begin{proposition}[\cite{Fan.MIA}]\label{2.prop.unbounded}
	Let $\Omega\subset\mathbb{R}^N$ be an unbounded domain satisfying the cone uniform condition. Suppose that $p\in C_+(\overline{\Omega})$ is Lipschitz in $\overline{\Omega}.$ Then there holds a continuous imbedding $W^{1,p(x)}(\Omega) \hookrightarrow  L^{q(x) }(\Omega ),$ for any $q\in L^\infty(\Omega)$ satisfying condition
	$$p(x)\leq q(x)\leq p^\ast(x)\ \text{for a.e.}\ x\in\Omega.$$
\end{proposition}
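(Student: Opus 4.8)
The plan is to reduce everything to the single endpoint imbedding $W^{1,p(x)}(\Omega)\hookrightarrow L^{p^\ast(x)}(\Omega)$ by a pointwise interpolation. For every $t\ge0$ and a.e.\ $x$, the inequalities $p(x)\le q(x)\le p^\ast(x)$ give $t^{q(x)}\le t^{p(x)}+t^{p^\ast(x)}$ (use $q(x)\ge p(x)$ when $0\le t\le1$, and $q(x)\le p^\ast(x)$ when $t>1$); taking $t=|u(x)|$ and integrating,
\[
\int_\Omega|u|^{q(x)}\diff x\ \le\ \int_\Omega|u|^{p(x)}\diff x+\int_\Omega|u|^{p^\ast(x)}\diff x .
\]
By Proposition~\ref{2.prop2} (the modular estimates there are insensitive to $q$ being merely measurable; cf.\ Remark~\ref{2.measureable.spaces}), this bounds $|u|_{L^{q(x)}(\Omega)}$ by a function of $|u|_{L^{p(x)}(\Omega)}$ and $|u|_{L^{p^\ast(x)}(\Omega)}$ that vanishes at the origin and is bounded on bounded sets. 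Hence it suffices to prove the two continuous imbeddings $W^{1,p(x)}(\Omega)\hookrightarrow L^{p(x)}(\Omega)$ and $W^{1,p(x)}(\Omega)\hookrightarrow L^{p^\ast(x)}(\Omega)$, after which $|u|_{L^{q(x)}(\Omega)}$ is controlled by a function of $\|u\|_{W^{1,p(x)}(\Omega)}$ vanishing at $0$, whence — the inclusion being linear — its continuity. The first imbedding is immediate, since $|u|_{L^{p(x)}(\Omega)}\le\|u\|_{W^{1,p(x)}(\Omega)}$ by the definition of the norm (here $V\equiv1$); only the critical imbedding remains.

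For the critical imbedding I would localize the bounded-domain critical Sobolev imbedding recalled above. Since $p$ is Lipschitz on $\overline\Omega$, it is uniformly log-H\"older there, so on any subset of $\overline\Omega$ of diameter at most a fixed $\delta$ the oscillation of $p$ — hence of $p^\ast$ — is as small as we wish once $\delta$ is small. Using the cone uniform condition, fix such a $\delta$ together with a countable covering $\overline\Omega\subset\bigcup_jB(x_j,\delta)$ whose dilates $B(x_j,2\delta)$ have overlap bounded by a fixed integer $M$, and such that each $\Omega_j:=\Omega\cap B(x_j,2\delta)$ is a bounded domain satisfying the cone condition with parameters independent of $j$. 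The bounded-domain critical imbedding then holds on every $\Omega_j$ with one common constant $C_0\ge1$, depending only on $N$, $p^-$, $p^+$, the (log-)H\"older constant of $p$ and the uniform cone parameters: $|u|_{L^{p^\ast(x)}(\Omega_j)}\le C_0\|u\|_{W^{1,p(x)}(\Omega_j)}$. Shrinking $\delta$ further, we may in addition arrange that $\gamma_0:=\inf_j\frac{\essinf_{\Omega_j}p^\ast}{\esssup_{\Omega_j}p}>1$: indeed this ratio equals, up to an error controlled by the oscillation of $p$ on $\Omega_j$, the quantity $\frac{Np_j/(N-p_j)}{p_j}=\frac{N}{N-p_j}\ge\frac{N}{N-p^-}>1$, where $p_j$ is any value of $p$ on $\Omega_j$. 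This uniform exponent gap is the crucial gain, and it is exactly what the Lipschitz hypothesis on $p$ provides.

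Now I would carry out the summation. After replacing $u$ by $tu$ for a small $t\in(0,1)$ — the modular only shrinks under such scaling — one may assume that $\varrho:=\int_\Omega\bigl(|u|^{p(x)}+|\nabla u|^{p(x)}\bigr)\diff x$ lies below a fixed threshold $\varepsilon_0$ depending only on $C_0$, $M$, $p^+$. Set $\beta_j:=\int_{\Omega_j}\bigl(|u|^{p(x)}+|\nabla u|^{p(x)}\bigr)\diff x$; the bounded overlap gives $\sum_j\beta_j\le M\varrho\le M\varepsilon_0$, so $\beta_j\le1$ for every $j$. Applying Proposition~\ref{2.prop2} on each $\Omega_j$ with the \emph{local} extremes of $p$ and of $p^\ast$ turns $\beta_j\le1$ first into $\|u\|_{W^{1,p(x)}(\Omega_j)}\le 2\,\beta_j^{1/\esssup_{\Omega_j}p}$ and then — once $\varepsilon_0$ is small enough that $2C_0\beta_j^{1/\esssup_{\Omega_j}p}\le1$, and using $\gamma_j:=\tfrac{\essinf_{\Omega_j}p^\ast}{\esssup_{\Omega_j}p}\ge\gamma_0>1$ together with $\beta_j\le1$ — into
\[
\int_{\Omega_j}|u|^{p^\ast(x)}\diff x\ \le\ \Bigl(2C_0\,\beta_j^{1/\esssup_{\Omega_j}p}\Bigr)^{\essinf_{\Omega_j}p^\ast}\ \le\ (2C_0)^{(p^\ast)^+}\,\beta_j^{\gamma_0}.
\]
Summing over $j$ and invoking the bounded overlap once more,
\[
\int_\Omega|u|^{p^\ast(x)}\diff x\ \le\ \sum_j\int_{\Omega_j}|u|^{p^\ast(x)}\diff x\ \le\ (2C_0)^{(p^\ast)^+}\sum_j\beta_j^{\gamma_0}\ \le\ (2C_0)^{(p^\ast)^+}\Bigl(\sum_j\beta_j\Bigr)^{\gamma_0}<\infty,
\]
the last step being the elementary embedding $\ell^1\hookrightarrow\ell^{\gamma_0}$, available precisely because $\gamma_0>1$. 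Hence $u\in L^{p^\ast(x)}(\Omega)$ with a quantitative bound on its norm; undoing the scaling and translating back through Proposition~\ref{2.prop2} yields the continuity of $W^{1,p(x)}(\Omega)\hookrightarrow L^{p^\ast(x)}(\Omega)$, which finishes the argument.

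The step I expect to be the main obstacle is the summation just described, for two intertwined reasons. First, the correspondence between modulars and norms in Proposition~\ref{2.prop2} is not homogeneous: it carries additive constants, so a careless localization would attach an additive $+1$ to each of the infinitely many $\Omega_j$ and the series would diverge; this is why one must first rescale $u$ so that \emph{every} local modular drops below $1$ and thereafter use only the favorable one-sided estimates. Second, for variable exponents one can genuinely have $(p^\ast)^-<p^+$, so the naive exponent $\tfrac{(p^\ast)^-}{p^+}$ relating $\int_{\Omega_j}|u|^{p^\ast(x)}\diff x$ to $\beta_j$ may be $<1$, in which case $\sum_j\beta_j^{\gamma}$ can diverge even though $\sum_j\beta_j$ converges; this is overcome by localizing finely enough that on each $\Omega_j$ the \emph{local} exponent ratio stays close to the pointwise ratio $p^\ast/p=\frac{N}{N-p}>1$, which is where the quantitative oscillation control from the Lipschitz (hence uniformly log-H\"older) regularity of $p$ is indispensable. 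A more routine, but still non-trivial, ingredient is the $j$-independence of the local Sobolev constants, which rests on the uniformity built into the cone uniform condition.
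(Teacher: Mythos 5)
First, note that the paper does not prove Proposition~\ref{2.prop.unbounded}: it is imported verbatim from \cite{Fan.MIA}, so there is no in-paper argument to compare yours against. Judged on its own merits, your architecture is the right one and is essentially the standard proof of such unbounded-domain imbeddings: the reduction $t^{q(x)}\le t^{p(x)}+t^{p^\ast(x)}$ to the two endpoint imbeddings is fine, and for the critical endpoint the combination of (a) a covering by uniformly small pieces with bounded overlap, (b) a $j$-independent local critical Sobolev constant, and (c) the summation $\sum_j\beta_j^{\gamma_0}\le\bigl(\sum_j\beta_j\bigr)^{\gamma_0}$ made possible by the \emph{local} exponent gap $\gamma_0>1$ is exactly the mechanism that makes the result true. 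You also correctly isolate the two traps (the additive constants in the modular--norm relations of Proposition~\ref{2.prop2}, and the fact that the global ratio $(p^\ast)^-/p^+$ may be $<1$), and your fix --- rescale so that every local modular drops below $1$, and shrink $\delta$ until the local oscillation of $p$ forces $\gamma_j$ close to $N/(N-p(x_j))\ge N/(N-p^-)>1$ --- is correct. (This last step tacitly uses $\sup_{\overline{\Omega}}p<N$ to control the oscillation of $p^\ast$; that holds under the paper's standing hypothesis $(\mathcal{P})$, though not under the literal statement, and the cited paper's title indicates it also treats exponents crossing $N$, where your argument would need modification.)

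The one step that does not survive scrutiny as written is the assertion that each $\Omega_j:=\Omega\cap B(x_j,2\delta)$ is a bounded domain satisfying the cone condition with parameters independent of $j$, to which you then apply ``the bounded-domain critical imbedding recalled above''. Neither half is automatic. Intersecting a uniform-cone domain with a ball can destroy the uniform cone condition: where $\partial B(x_j,2\delta)$ meets $\partial\Omega$ almost tangentially, the intersection develops arbitrarily thin crescents in which no cone of fixed aperture and height fits, and one cannot in general choose the centers so as to avoid this simultaneously for all $j$. Moreover, the imbedding recalled in Section~\ref{Preliminaries} is stated for bounded \emph{Lipschitz} domains, and cone-condition pieces need not be Lipschitz. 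The standard repair --- and what I would expect \cite{Fan.MIA} to do --- is to replace the ball intersections by the Adams-type decomposition of a cone-condition domain into countably many subdomains, each a union of translates of one fixed parallelepiped or cone (hence of uniformly bounded diameter and satisfying the cone condition with fixed parameters), with bounded overlap; one then needs the critical imbedding for bounded cone-condition (not merely Lipschitz) domains, with a constant depending only on the cone parameters, the diameter, $p^{\pm}$ and the log-H\"older modulus of $p$. With that substitution your summation goes through unchanged, so the gap is a missing --- standard but genuinely needed --- decomposition lemma rather than a flaw in the strategy.
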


\section{Concentration-Compactness Principles}
\subsection{Statements of the concentration-compactness principles} In this section, we establish two concentration-compactness principles for weighted variable exponent spaces defined on $\mathbb{R}^N$. These results are generalizations of several known results for non-weighted variable exponent spaces.

Let $C_c(\mathbb{R}^N)$ be the
set of all continuous functions $u : \mathbb{R}^N \to\mathbb{R}$ whose support is compact, and let $C_0(\mathbb{R}^N)$ be the completion of $C_c(\mathbb{R}^N)$ relative to the supremum norm $\|\cdot\|_\infty.$ Let $\mathcal{M}(\mathbb{R}^N)$ be the space of all signed finite
Radon measures on $\mathbb{R}^N$ with the total variation norm. We may identify $\mathcal{M}(\mathbb{R}^N)$ with the dual of $C_0(\mathbb{R}^N)$ via the Riesz representation theorem, that is, for each $\mu\in \left[C_0(\mathbb{R}^N)\right]^\ast$ there is a unique element in $\mathcal{M}(\mathbb{R}^N)$, still denoted by $\mu$, such that
$$\langle \mu,f\rangle=\int_{\mathbb{R}^N}fd\mu,\quad \forall f\in C_0(\mathbb{R}^N)$$
(see, e.g., \cite[Section 1.3.3]{Fonseca}). We identify $L^1(\mathbb{R}^N)$ with a subspace of $\mathcal{M}(\mathbb{R}^N)$ through the imbedding
$T:\ L^1(\mathbb{R}^N)\to \left[C_0(\mathbb{R}^N)\right]^\ast$  defined by
$$\langle Tu,f\rangle=\int_{\mathbb{R}^N}uf\diff x, \ \ \forall u\in L^1(\mathbb{R}^N), \ \forall f\in C_0(\mathbb{R}^N).$$
In what follows, the notation $u_n\to u$ (resp. $u_n \rightharpoonup u,u_n \overset{\ast }{\rightharpoonup }u$) indicates for the term $u_n$ strongly (resp. weakly, weakly-$\ast$) converges to $u$ as $n \to \infty$ in an appropriate space. The notations $B_\epsilon(x),$ $|E|$ and $E^c$ stand for an open ball in $\mathbb{R}^N$ centered at $x$ with radius $\epsilon$, the Lebesgue measure, and the complement of a set $E$ in $\mathbb{R}^N$, respectively. Unless stated otherwise, we write $B_\epsilon$ in place of $B_\epsilon(0).$ We also denote the characteristic function of $E$ by $\chi_E.$

Throughout this paper, we assume for the exponent $p$ and the potential term $V$ that
\begin{itemize}
	\item[$(\mathcal{P})$] $p: \mathbb{R}^N\to \mathbb{R}$ is Lipschitz continuous such that $1<p^-\leq p^+<N$; $0<\underset{x\in B_R}{\essinf}\ V(x)\leq\underset{x\in B_R}{\esssup}\ V(x)<\infty$ for all $R>0$. 
	Furthermore, in the case $\underset{x\in\mathbb{R}^N}{\essinf}\ V(x)=0,$ we assume in addition that $p$ satisfies the log-H\"older decay condition, namely, that there exist constants $p_\infty$ and $C$ such that
	$$|p(x)-p_\infty|\leq \frac{C}{\log (e+|x|)},\quad \forall x\in\mathbb{R}^N.$$ 
\end{itemize}
Under the assumption $(\mathcal{P}),$ we assume that the exponent $q$ and the weight $b$ satisfy
\begin{itemize}
	\item[$(\mathcal{C})$] $q\in C(\mathbb{R}^N)$, $p(x)<q(x)\leq p^\ast(x)$ for all $x\in\mathbb{R}^N$ with $\underset{x\in\mathbb{R}^N}{\inf}(q(x)-p(x))>0$, there is a bounded Lipschitz (nonempty) domain $\Omega$ of $\mathbb{R}^N$ such that
	$\emptyset \ne \mathcal{A}:=\{x\in \mathbb{R}^N: q(x)=p^{\ast }(x)\}\subset (\overline{\Omega})^c;$ furthermore, in the case $\underset{x\in\mathbb{R}^N}{\essinf}\ V(x)=0$ we additionally assume that $|\{x\in\mathbb{R}^N:q(x)\ne p^\ast(x)\}|<\infty;$ 
	$b\in P_+(\mathbb{R}^N)$, $0<b(x)\leq b_0$ a.e. in $\Omega^c$ for some positive constant $b_0,$ and	$b\in L^{\beta(x)}(\Omega)$ for some $\beta\in C_+(\overline{\Omega})$ satisfying  $q(x)<\frac{\beta(x)-1}{\beta(x)}p^\ast(x)$ for all $x\in \overline{\Omega}.$
\end{itemize}
We emphasize that for simplicity and clarity of the assumptions, we only stated them for $\Omega\ne\emptyset,$ but in the sequel, we shall also include the case $\Omega=\emptyset.$ In this case, $(\mathcal{C})$ reads
\begin{itemize}
	\item[$(\mathcal{C})$] $q\in C(\mathbb{R}^N)$, $q(x)\leq p^\ast(x)$ for all $x\in\mathbb{R}^N$ with $\underset{x\in\mathbb{R}^N}{\inf}(q(x)-p(x))>0$ and
	$\mathcal{A}:=\{x\in \mathbb{R}^N: q(x)= p^{\ast }(x)\}\ne\emptyset.$ Furthermore in the case $\underset{x\in\mathbb{R}^N}{\essinf}\ V(x)=0$ we assume in addition that $|\{x\in\mathbb{R}^N:q(x)\ne p^\ast(x)\}|<\infty;$
	$b\in P_+(\mathbb{R}^N)$ satisfying $0<b(x)\leq b_0$ a.e. in $\mathbb{R}^N$ for some positive constant $b_0.$
\end{itemize}
It is worth mentioning that the potential $V(x)$ may possibly tend to zero or infinity as $|x|\to\infty$. Some examples for such potentials are $V(x)=1+|x|$ and $V(x)=\frac{c}{(1+|x|)^{p(x)}}$. By standard arguments, we can verify that $W_V^{1,p(x) }(\mathbb{R}^N) $ is a reflexive separable Banach space under the assumption $(\mathcal{P})$ (cf. \cite{Diening, Kim, Ky3}). Define $X$ as the closure of $C_c^\infty(\mathbb{R}^N)$ in $W_V^{1,p(x)}(\mathbb{R}^N)$, and for brevity we write $\|\cdot\|$ instead of $ \|\cdot\|_{W_V^{1,p(x)}(\mathbb{R}^N)}$ in what follows. 
We have the following crucial imbeddings.

\begin{lemma} \label{III.critical.imb.loc}
	Assume that $0<\underset{x\in B_R}{\essinf}\ V(x)\leq\underset{x\in B_R}{\esssup}\ V(x)<\infty$ for all $R>0$. Then, 
	\begin{center}
		$X \hookrightarrow W_V^{1,p(x)}\left(\mathbb{R}^N\right) \hookrightarrow \hookrightarrow L_{\loc}^{p(x) }\left(\mathbb{R}^N \right).$
	\end{center}
\end{lemma}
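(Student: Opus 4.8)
The plan is to establish the two imbeddings separately. The first imbedding $X \hookrightarrow W_V^{1,p(x)}(\mathbb{R}^N)$ is immediate from the definition of $X$ as the closure of $C_c^\infty(\mathbb{R}^N)$ in $W_V^{1,p(x)}(\mathbb{R}^N)$; the inclusion map is the identity and is norm-preserving. So the content is entirely in the second (compact) imbedding $W_V^{1,p(x)}(\mathbb{R}^N) \hookrightarrow\hookrightarrow L_{\mathrm{loc}}^{p(x)}(\mathbb{R}^N)$, which by definition means that for every $R>0$ the restriction map into $L^{p(x)}(B_R)$ is compact.

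First I would fix $R>0$ and compare $W_V^{1,p(x)}(\mathbb{R}^N)$ restricted to $B_R$ with the ordinary space $W^{1,p(x)}(B_R)$. The key observation is the two-sided bound $0<m_R:=\essinf_{B_R}V\le\esssup_{B_R}V=:M_R<\infty$, which forces the weighted modular $\int_{B_R}V(x)|u|^{p(x)}\diff x$ to be comparable to the unweighted one $\int_{B_R}|u|^{p(x)}\diff x$. Via Proposition~\ref{2.prop2} this translates into a two-sided norm comparison $|u|_{L^{p(x)}(B_R)}\le C(m_R)\,|u|_{L^{p(x)}(V,B_R)}$ (one must be a little careful with the nonhomogeneity of the modular: split according to whether the relevant norms are $\ge 1$ or $<1$, or simply use the consequence inequality $|u|^{p^-}-1\le\rho(u)\le|u|^{p^+}+1$ to get the comparison up to additive constants, which suffices). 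Consequently $u\mapsto u|_{B_R}$ maps $W_V^{1,p(x)}(\mathbb{R}^N)$ continuously into $W^{1,p(x)}(B_R)$: indeed $\big||\nabla u|\big|_{L^{p(x)}(B_R)}\le\big||\nabla u|\big|_{L^{p(x)}(\mathbb{R}^N)}$ trivially, and the $L^{p(x)}(B_R)$-norm of $u$ is controlled by $|u|_{L^{p(x)}(V,\mathbb{R}^N)}$ by the above.

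Next I would invoke the standard compact imbedding on the bounded domain. Since $B_R$ is a bounded Lipschitz domain and $p$ is Lipschitz (hence log-H\"older) on $\overline{B_R}$ with $1<p^-\le p^+<N$, the classical result recalled in Section~\ref{Preliminaries} gives $W^{1,p(x)}(B_R)\hookrightarrow\hookrightarrow L^{q(x)}(B_R)$ for any $q\in C_+(\overline{B_R})$ with $q<p^*$ on $\overline{B_R}$; in particular, taking $q\equiv p$ (which satisfies $p(x)<p^*(x)$ since $p(x)<N$), we get $W^{1,p(x)}(B_R)\hookrightarrow\hookrightarrow L^{p(x)}(B_R)$. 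Composing the continuous restriction map $W_V^{1,p(x)}(\mathbb{R}^N)\to W^{1,p(x)}(B_R)$ with this compact imbedding yields that $W_V^{1,p(x)}(\mathbb{R}^N)\to L^{p(x)}(B_R)$ is compact. Since $R>0$ was arbitrary, this is precisely the asserted compact imbedding into $L_{\mathrm{loc}}^{p(x)}(\mathbb{R}^N)$, and the proof is complete.

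The only mildly delicate point — and the step I would treat most carefully — is the weighted-versus-unweighted norm comparison on $B_R$, because of the nonhomogeneity of the Luxemburg norm and the fact that $V$ is only bounded above and below, not continuous. This is handled by the modular estimates of Proposition~\ref{2.prop2} as indicated; everything else is a routine assembly of facts already recorded in the excerpt.
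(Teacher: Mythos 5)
Your argument is correct and follows the same route as the paper, which simply observes that the hypothesis on $V$ forces $W_V^{1,p(x)}(B_R)=W^{1,p(x)}(B_R)$ with equivalent norms for every $R>0$ and then invokes the standard compact imbedding on the bounded Lipschitz domain $B_R$. You have merely spelled out the norm comparison via the modular estimates and the trivial first imbedding, both of which the paper leaves implicit.
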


\begin{lemma} \label{III.critical.imb}
	Assume that $(\mathcal{P})$ and $(\mathcal{C})$ hold. Then, we have the following continuous imbedding
	\begin{center}
		$X \hookrightarrow L^{q(x) }\left(b,\mathbb{R}^N \right).$
	\end{center}
In particular, under assumption $(\mathcal{P})$ we have the following continuous imbedding
\begin{center}
	$X \hookrightarrow L^{p^\ast(x) }\left(\mathbb{R}^N \right).$	
\end{center}
\end{lemma}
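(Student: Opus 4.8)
The plan is to prove both imbeddings by first establishing the relevant modular/norm estimates for $u\in C_c^\infty(\mathbb{R}^N)$ and then extending to $X$ by density, decomposing $\mathbb{R}^N=\Omega\cup\Omega^c$ and treating the two regimes $\essinf_{x\in\mathbb{R}^N}V(x)>0$ and $\essinf_{x\in\mathbb{R}^N}V(x)=0$ separately; when $\Omega=\emptyset$ only the estimate on $\Omega^c$ (which is then all of $\mathbb{R}^N$) is needed.

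I would first settle the critical imbedding $X\hookrightarrow L^{p^\ast(x)}(\mathbb{R}^N)$, on which the rest of the argument rests. If $c_0:=\essinf_{x\in\mathbb{R}^N}V(x)>0$, then a direct modular comparison gives $X\hookrightarrow W^{1,p(x)}(\mathbb{R}^N)$ continuously, and since $p$ is Lipschitz and $\mathbb{R}^N$ satisfies the cone uniform condition, Proposition~\ref{2.prop.unbounded} with $q=p^\ast$ yields $W^{1,p(x)}(\mathbb{R}^N)\hookrightarrow L^{p^\ast(x)}(\mathbb{R}^N)$. If $\essinf_{x\in\mathbb{R}^N}V(x)=0$, then by $(\mathcal{P})$ the exponent $p$ is Lipschitz and obeys the log-H\"older decay condition, hence is globally log-H\"older continuous on $\mathbb{R}^N$; together with $1<p^-\le p^+<N$ this provides the Sobolev inequality $|u|_{L^{p^\ast(x)}(\mathbb{R}^N)}\le C\big||\nabla u|\big|_{L^{p(x)}(\mathbb{R}^N)}\le C\|u\|$ for $u\in C_c^\infty(\mathbb{R}^N)$ (the standard inequality on $\mathcal{D}^{1,p(x)}(\mathbb{R}^N)$, cf.~\cite{Diening}), and density gives the claim.

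For $X\hookrightarrow L^{q(x)}(b,\mathbb{R}^N)$ I would control $\int_{\mathbb{R}^N}b\,|u|^{q(x)}\diff x$ on $\Omega$ and on $\Omega^c$ in turn. On $\Omega$: H\"older's inequality (Proposition~\ref{2.prop1}) with exponents $\beta(x),\beta'(x)$ gives $\int_\Omega b\,|u|^{q(x)}\diff x\le 2\,|b|_{L^{\beta(x)}(\Omega)}\big||u|^{q}\big|_{L^{\beta'(x)}(\Omega)}$; Proposition~\ref{2.Sim-Kim} bounds $\big||u|^{q}\big|_{L^{\beta'(x)}(\Omega)}$ by $|u|_{L^{q(x)\beta'(x)}(\Omega)}^{q^+}+|u|_{L^{q(x)\beta'(x)}(\Omega)}^{q^-}$; and since $q(x)\beta'(x)<p^\ast(x)$ on the compact set $\overline{\Omega}$, $\Omega$ is a bounded Lipschitz domain and $0<\essinf_{\Omega}V\le\esssup_{\Omega}V<\infty$, the subcritical Sobolev imbedding yields $|u|_{L^{q(x)\beta'(x)}(\Omega)}\le C\|u\|_{W^{1,p(x)}(\Omega)}\le C\|u\|$. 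On $\Omega^c$, where $0<b\le b_0$ a.e.\ and $p(x)<q(x)\le p^\ast(x)$, I would use the pointwise bound $|u|^{q(x)}\le|u|^{p(x)}+|u|^{p^\ast(x)}$, so $\int_{\Omega^c}b\,|u|^{q(x)}\diff x\le b_0\big(\int_{\Omega^c}|u|^{p(x)}\diff x+\int_{\mathbb{R}^N}|u|^{p^\ast(x)}\diff x\big)$; in the regime $\essinf V>0$ the first integral is $\le c_0^{-1}\int_{\mathbb{R}^N}V|u|^{p(x)}\diff x$, whereas in the regime $\essinf V=0$ I would split $\Omega^c=(\Omega^c\cap\mathcal{A})\cup D$ with $D:=\{q\ne p^\ast\}\cap\Omega^c$, use $|u|^{q(x)}=|u|^{p^\ast(x)}$ on $\mathcal{A}$, and on $D$ exploit $|D|\le|\{q\ne p^\ast\}|<\infty$ together with the finite-measure imbedding $L^{p^\ast(x)}(D)\hookrightarrow L^{q(x)}(D)$ (valid since $q\le p^\ast$ on $D$) to get $|u|_{L^{q(x)}(D)}\le C_D\,|u|_{L^{p^\ast(x)}(\mathbb{R}^N)}\le C_D'\|u\|$, hence $\int_D|u|^{q(x)}\diff x$ is bounded by a power of $\|u\|$. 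In every case the right-hand side is a finite sum of modular quantities each dominated by a power of $\|u\|$ with no additive constant, so the continuous imbedding follows from the modular--norm equivalence in Propositions~\ref{2.prop2}--\ref{2.prop3}.

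The two delicate points I anticipate are these. The genuine difficulty is at infinity on $\Omega^c$: there $q$ may coincide with $p^\ast$ far out, and when $V$ decays to $0$ there is no Schr\"odinger term to absorb $\int_{\Omega^c}|u|^{p(x)}\diff x$; this is precisely where the hypothesis $|\{q\ne p^\ast\}|<\infty$ does its work, letting the finite-measure interpolation handle the strictly subcritical set while the truly critical part collapses to $\int_{\mathbb{R}^N}|u|^{p^\ast(x)}\diff x$, which is already controlled. The second point is the critical Sobolev inequality in the decaying-potential case, which needs the \emph{global} log-H\"older continuity of $p$ --- exactly why $(\mathcal{P})$ imposes the log-H\"older decay only when $\essinf V=0$; some bookkeeping is also needed to verify that the auxiliary exponents $q\beta'$ and $p^\ast/q$ lie in the admissible classes so that Propositions~\ref{2.prop1},~\ref{2.Sim-Kim} and the standard imbeddings apply, which follows from $\inf_{\mathbb{R}^N}(q-p)>0$, $p^+<N$ and the continuity of $p$ and $q$.
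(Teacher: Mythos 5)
Your proof is correct and follows essentially the same strategy as the paper's Appendix~\ref{AppendixA} argument: density of $C_c^\infty(\mathbb{R}^N)$, the splitting $\mathbb{R}^N=\Omega\cup\Omega^c$ with H\"older plus Proposition~\ref{2.Sim-Kim} and the subcritical imbedding on $\Omega$, Proposition~\ref{2.prop.unbounded} when $\essinf V>0$, and the critical Sobolev inequality of \cite{Diening} together with $|\{q\ne p^\ast\}|<\infty$ when $\essinf V=0$. The only (harmless) reorganization is that you establish $X\hookrightarrow L^{p^\ast(x)}(\mathbb{R}^N)$ first and feed it into the $\Omega^c$ estimate via pointwise bounds and finite-measure interpolation, whereas the paper proves the weighted imbedding directly using a cutoff supported away from $\overline{\Omega}$ and recovers the $L^{p^\ast(x)}$ imbedding as the case $\Omega=\emptyset$, $b\equiv 1$, $q=p^\ast$.
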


The proofs of Lemmas \ref{III.critical.imb.loc} and \ref{III.critical.imb} are provided in Appendix~\ref{AppendixA}.
Note that by Lemma~\ref{III.critical.imb}, we have
\begin{equation}\label{3.best.const}
S:=\underset{\phi\in X\setminus \{0\}}{\inf}
\frac{\|\phi\|}{| \phi |_{L^{q(x)}(b,\mathbb{R}^N)}}>0.
\end{equation}

The following concentration-compactness principles extend the results by Fu-Zhang \cite{Fu-Zhang} to the weighted space case.


\begin{theorem}\label{III.ccp}
	Assume that $(\mathcal{P})$ and $(\mathcal{C})$ hold. Let $\{u_n\}_{n\in\mathbb{N}}$ be a bounded sequence in
	$X$ such that
	\begin{gather*}
		u_n \rightharpoonup u \quad \text{in}\quad  X,\\
		|\nabla u_n|^{p(x)}+V|u_n|^{p(x)} \overset{\ast }{\rightharpoonup }\mu\quad \text{in}\quad \mathcal{M}(\mathbb{R}^N),\\
		b|u_n|^{q(x)}\overset{\ast }{\rightharpoonup }\nu\quad \text{in}\quad \mathcal{M}(\mathbb{R}^N).
	\end{gather*}
	Then, there exist $\{x_i\}_{i\in I}\subset \mathcal{A}$ of distinct points and $\{\nu_i\}_{i\in I}, \{\mu_i\}_{i\in I}\subset (0,\infty),$ where $I$ is at most countable, such that
	\begin{gather}
	\nu=b|u|^{q(x)} + \sum_{i\in I}\nu_i\delta_{x_i},\label{3.ccp.form.nu}\\
	\mu \geq |\nabla u|^{p(x)}+V|u|^{p(x)} + \sum_{i\in I} \mu_i \delta_{x_i},\label{3.ccp.form.mu}\\
	S \nu_i^{1/q(x_i)} \leq \mu_i^{1/p(x_i)}, \quad \forall i\in I.\label{3.ccp.nu_mu}
	\end{gather}
\end{theorem}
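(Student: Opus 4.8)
The plan is to follow the classical Lions concentration-compactness scheme, adapted to the variable-exponent weighted setting, using the two key ingredients already available: the compact local imbedding $X \hookrightarrow\hookrightarrow L^p_{\loc}(\mathbb{R}^N)$ (Lemma~\ref{III.critical.imb.loc}) and the continuous critical imbedding $X \hookrightarrow L^{q(x)}(b,\mathbb{R}^N)$ with best constant $S>0$ (Lemma~\ref{III.critical.imb} and \eqref{3.best.const}). First I would reduce to the case $u\equiv 0$ by the standard translation trick: set $v_n := u_n - u$; using the Brezis--Lieb type splitting for the modular functionals $\int |\nabla \cdot|^{p(x)}$, $\int V|\cdot|^{p(x)}$ and $\int b|\cdot|^{q(x)}$, together with the compactness $v_n \to 0$ in $L^p_{\loc}$ and a.e., one shows $|\nabla v_n|^{p(x)}+V|v_n|^{p(x)} \overset{\ast}{\rightharpoonup} \tilde\mu := \mu - (|\nabla u|^{p(x)}+V|u|^{p(x)})$ and $b|v_n|^{q(x)} \overset{\ast}{\rightharpoonup} \tilde\nu := \nu - b|u|^{q(x)}$, where $\tilde\mu \geq 0$; it then suffices to prove the statement for $\{v_n\}$, i.e.\ to show $\tilde\nu = \sum_{i\in I}\nu_i\delta_{x_i}$ with atoms in $\mathcal{A}$, $\tilde\mu \geq \sum \mu_i\delta_{x_i}$, and the reverse Sobolev inequality \eqref{3.ccp.nu_mu}.

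Second, for the reduced sequence I would establish the \emph{global} inequality: for every $\varphi \in C_c^\infty(\mathbb{R}^N)$,
\begin{equation*}
S\, |\varphi|_{L^{q(x)}(\tilde\nu)} \leq |\varphi|_{L^{p(x)}(\tilde\mu)},
\end{equation*}
meaning $S\big(\int |\varphi|^{q(x)}\diff\tilde\nu\big)$-type modular control by $\big(\int |\varphi|^{p(x)}\diff\tilde\mu\big)$, interpreted through the norm-modular relations of Proposition~\ref{2.prop2} and Remark~\ref{2.measureable.spaces}. This comes from applying \eqref{3.best.const} to $\varphi v_n \in X$ (a legitimate test element since $\varphi$ is smooth compactly supported), expanding $\|\varphi v_n\|$ and $|\varphi v_n|_{L^{q(x)}(b,\mathbb{R}^N)}$, and passing to the limit: the terms where a derivative hits $\varphi$ vanish because $v_n \to 0$ strongly in $L^p_{\loc}$ on $\operatorname{supp}\varphi$, so only $\int |\varphi|^{p(x)}(|\nabla v_n|^{p(x)}+V|v_n|^{p(x)})\diff x \to \int |\varphi|^{p(x)}\diff\tilde\mu$ and $\int b|\varphi|^{q(x)}|v_n|^{q(x)}\diff x \to \int |\varphi|^{q(x)}\diff\tilde\nu$ survive.

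Third, from this functional inequality I would deduce, by a localization argument (testing with $\varphi$ supported near a point $x_0$ and shrinking the support), that $\tilde\nu$ is absolutely continuous with respect to $\tilde\mu$ in the sense of a pointwise bound, and more precisely that $\tilde\nu$ has atomic part only. The argument runs as in Lions: either $\tilde\nu(\{x_0\}) = 0$, or, writing $\nu_i = \tilde\nu(\{x_i\})$, $\mu_i = \tilde\mu(\{x_i\})$ for the (at most countably many) atoms, one gets $S\nu_i^{1/q(x_i)} \leq \mu_i^{1/p(x_i)}$ by testing with functions concentrating at $x_i$ (here continuity of $p,q$ at $x_i$ lets one replace $q(x)$, $p(x)$ by $q(x_i)$, $p(x_i)$ in the limit up to $\varepsilon$). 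One then shows the non-atomic part of $\tilde\nu$ vanishes: on any set avoiding the atoms, the density bound forces $\tilde\nu \equiv 0$ there, because a Radon measure with $\tilde\nu(E)^{1/q} \lesssim \tilde\mu(E)^{1/p}$ for all small $E$ and $q < p^*$ fails only at atoms — this is where one uses that $\tilde\mu$ is finite. Finally, to see $x_i \in \mathcal{A}$: if $x_i \notin \mathcal{A}$ then $q(x_i) < p^*(x_i)$, so near $x_i$ the imbedding $X \hookrightarrow\hookrightarrow L^{q(x)}(b,\mathbb{R}^N)$ is \emph{locally compact} — this uses $(\mathcal{C})$, specifically that $\mathcal{A} \subset (\overline{\Omega})^c$ and $b \in L^{\beta(x)}(\Omega)$ with the subcriticality $q(x) < \frac{\beta(x)-1}{\beta(x)}p^*(x)$, giving a compact imbedding on a neighbourhood of $x_i$ — hence $b|v_n|^{q(x)} \to 0$ in $L^1$ near $x_i$, so $x_i$ cannot carry mass.

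\textbf{The main obstacle} I anticipate is the third step: proving that $\tilde\nu$ is purely atomic and that every atom sits in $\mathcal{A}$. In the constant-exponent theory this is routine, but here one must juggle three complications simultaneously — the variable exponents (handled by continuity and localization, trading exact powers for $\varepsilon$-approximations), the weight $b$ which is merely in $L^{\beta(x)}_{\loc}$ and not bounded below (handled by the Hölder-type splitting of $\Omega$ versus $\Omega^c$ built into $(\mathcal{C})$), and the possibly degenerate potential $V$ with $\essinf V = 0$ (handled by the additional assumptions $|\{q \neq p^*\}| < \infty$ and the log-Hölder decay on $p$, which are exactly what make Lemma~\ref{III.critical.imb} hold). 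Keeping track of which hypothesis rescues which term, and verifying the local compactness of $X \hookrightarrow\hookrightarrow L^{q(x)}_{\loc}(b,\cdot)$ away from $\mathcal{A}$, is the delicate part; everything else is the standard Lions machinery transcribed to modulars via Propositions~\ref{2.prop2}--\ref{2.Sim-Kim}.
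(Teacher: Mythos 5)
Your overall scheme is the same as the paper's: translate to $v_n=u_n-u$, apply the Sobolev inequality \eqref{3.best.const} to $\phi v_n$, pass to the limit to get $S|\phi|_{L^{q(x)}_{\tilde\nu}}\leq|\phi|_{L^{p(x)}_{\tilde\mu}}$, invoke a reverse-H\"older lemma to get atomicity, use local compactness away from $\mathcal{A}$ to locate the atoms, and concentrate test functions to get \eqref{3.ccp.nu_mu}. There is, however, one step that would fail as written. You claim that the Brezis--Lieb splitting applies to the modular $\int|\nabla\cdot|^{p(x)}$ and yields $|\nabla v_n|^{p(x)}+V|v_n|^{p(x)}\overset{\ast}{\rightharpoonup}\tilde\mu=\mu-(|\nabla u|^{p(x)}+V|u|^{p(x)})$. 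Brezis--Lieb for the gradient term requires $\nabla u_n\to\nabla u$ a.e., which does \emph{not} follow from $u_n\rightharpoonup u$ in $X$ (the local compactness of Lemma~\ref{III.critical.imb.loc} gives a.e.\ convergence of $u_n$, not of $\nabla u_n$). The paper avoids this entirely: it only extracts \emph{some} weak-$\ast$ limit $\bar\mu$ of $|\nabla v_n|^{p(x)}+V|v_n|^{p(x)}$ along a subsequence, which is all the atomicity argument needs.

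This gap propagates to your derivations of \eqref{3.ccp.form.mu} and \eqref{3.ccp.nu_mu}, both of which you route through the unproved identity for $\tilde\mu$. Both are repairable without it: \eqref{3.ccp.form.mu} follows from weak lower semicontinuity ($\mu\geq|\nabla u|^{p(x)}+V|u|^{p(x)}$ tested against nonnegative $\phi\in C_c$) combined with $\mu(\{x_i\})\geq\mu_i$, since the $L^1$ part charges no points; and for \eqref{3.ccp.nu_mu} the paper tests \eqref{3.best.const} against $\phi u_n$ rather than $\phi v_n$, producing the extra terms $|\phi u|_{L^{p(x)}(V,\mathbb{R}^N)}$ and $\big||u\nabla\phi|\big|_{L^{p(x)}(\mathbb{R}^N)}$, which vanish as the support of $\phi$ shrinks to $x_i$ because $u$ is a fixed function; this gives the inequality directly between $\nu_i$ and $\mu_i=\mu(\{x_i\})$ with the constant $S$ intact. (If you insist on working with $\tilde\mu$, you would additionally have to show $\tilde\mu(\{x_i\})\leq\mu(\{x_i\})$, e.g.\ via $|\nabla v_n|^{p(x)}\leq(1+\delta)|\nabla u_n|^{p(x)}+C_\delta|\nabla u|^{p(x)}$ and $\delta\to0$.) One last small point: the dichotomy forcing the non-atomic part of $\tilde\nu$ to vanish rests on the uniform gap $\inf_{x}(q(x)-p(x))>0$ from $(\mathcal{C})$ (so that $\tilde\nu(E)\lesssim\tilde\mu(E)^{q/p}$ with exponent bounded away from $1$), not on $q<p^\ast$ as you wrote; this is exactly the content of Lemmas~\ref{III.lemma.reserveHolder} and \ref{III.le.est.measure}.
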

The information about possible loss of mass at infinity is provided in the next theorem.
\begin{theorem}\label{III.ccp.infinity}
	Assume that $(\mathcal{P})$ and $(\mathcal{C})$  hold. Let $u_n \rightharpoonup u$ in $X$ and set
	$$\mu_\infty:=\lim_{R\to\infty}\underset{n\to\infty}{\lim\sup}\int_{\{|x|>R\}}\left[|\nabla u_n|^{p(x)}+V(x)|u_n|^{p(x)}\right]\diff x,$$
	$$\nu_\infty:=\lim_{R\to\infty}\underset{n\to\infty}{\lim\sup}\int_{\{|x|>R\}}b(x)|u_n|^{q(x)}\diff x.$$
Then 	
	\begin{eqnarray}
		\underset{n\to\infty}{\lim\sup}\int_{\mathbb{R}^N}\left[|\nabla u_n|^{p(x)}+V(x)|u_n|^{p(x)}\right]\diff x=\mu(\mathbb{R}^N)+\mu_\infty,\label{3.ccp.infinity.mu}\\
	\underset{n\to\infty}{\lim\sup}\int_{\mathbb{R}^N}b(x)|u_n|^{q(x)}\diff x=\nu(\mathbb{R}^N)+\nu_\infty.\label{3.ccp.infinity.nu}
	\end{eqnarray}
	Assume in addition that:
	\begin{itemize}
		\item[$(\mathcal{E}_\infty)$] There exist positive real numbers $p_\infty$ and $q_\infty$ such that $\lim_{|x|\to\infty}p(x)=p_\infty$ and $\lim_{|x|\to\infty}q(x)=q_\infty.$
	\end{itemize}
	 Then
	\begin{equation}\label{3.ccp.infinity.nu_mu}
		\frac{1}{2}S \nu_\infty^{1/q_\infty} \leq \mu_\infty^{1/p_\infty}.
			\end{equation}
\end{theorem}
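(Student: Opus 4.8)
The plan is to proceed in two stages: first establish the two mass-balance identities \eqref{3.ccp.infinity.mu} and \eqref{3.ccp.infinity.nu}, which do not require $(\mathcal{E}_\infty)$; then derive the reverse-Sobolev-type inequality \eqref{3.ccp.infinity.nu_mu} at infinity. For the mass-balance identities I would introduce a family of smooth cut-off functions $\psi_R \in C^\infty(\mathbb{R}^N)$ with $0\le\psi_R\le 1$, $\psi_R\equiv 0$ on $B_R$, $\psi_R\equiv 1$ on $B_{2R}^c$, and $|\nabla\psi_R|\le C/R$. Splitting $\int_{\mathbb{R}^N}=\int_{\mathbb{R}^N}\psi_R + \int_{\mathbb{R}^N}(1-\psi_R)$ for the densities $|\nabla u_n|^{p(x)}+V|u_n|^{p(x)}$ and $b|u_n|^{q(x)}$, one notes that $1-\psi_R\in C_c(\mathbb{R}^N)$, so the weak-$*$ convergence to $\mu$ (resp.\ $\nu$) handles the $(1-\psi_R)$ part: $\lim_n\int(1-\psi_R)d(\text{density}_n)=\int(1-\psi_R)d\mu\to\mu(\mathbb{R}^N)$ as $R\to\infty$ by dominated convergence for measures, since $\mu$ is finite. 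The $\psi_R$ part, after taking $\limsup_n$ then $\lim_R$, is by definition $\mu_\infty$ (resp.\ $\nu_\infty$); here I must check the elementary fact that $\limsup_n(a_n+b_n)=\limsup_n a_n+\lim_n b_n$ when $b_n$ converges, and that the $\limsup$ over $\{|x|>R\}$ with the sharp cut-off agrees with the one using $\psi_R$ up to the annulus $B_{2R}\setminus B_R$, whose contribution vanishes as $R\to\infty$. This yields \eqref{3.ccp.infinity.mu} and \eqref{3.ccp.infinity.nu}.

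For \eqref{3.ccp.infinity.nu_mu} the strategy mirrors the proof of \eqref{3.ccp.nu_mu} in Theorem \ref{III.ccp} but localizes at infinity. Fix the cut-off $\psi_R$ as above. Applying the imbedding inequality from \eqref{3.best.const}, namely $S|\phi|_{L^{q(x)}(b,\mathbb{R}^N)}\le\|\phi\|$, to $\phi=\psi_R u_n$, and using Proposition \ref{2.prop2} (and Proposition \ref{2.Sim-Kim}) to pass between modulars and norms, together with the product rule $\nabla(\psi_R u_n)=\psi_R\nabla u_n+u_n\nabla\psi_R$ and the triangle inequality in $L^{p(x)}(\mathbb{R}^N)$, one gets
\begin{equation*}
S\,\big|\psi_R u_n\big|_{L^{q(x)}(b,\mathbb{R}^N)}\le \big|\psi_R\nabla u_n\big|_{L^{p(x)}(\mathbb{R}^N)}+\big|u_n\nabla\psi_R\big|_{L^{p(x)}(\mathbb{R}^N)}+\big|\psi_R u_n\big|_{L^{p(x)}(V,\mathbb{R}^N)}.
\end{equation*}
The crossed term $|u_n\nabla\psi_R|_{L^{p(x)}(\mathbb{R}^N)}$ is controlled, via $|\nabla\psi_R|\le C/R$ and the compact imbedding $X\hookrightarrow\hookrightarrow L^{p(x)}_{\loc}(\mathbb{R}^N)$ of Lemma \ref{III.critical.imb.loc} applied on $B_{2R}$, so that $\limsup_n|u_n\nabla\psi_R|_{L^{p(x)}(B_{2R})}\to 0$ as $R\to\infty$ (this is where boundedness of $\{u_n\}$ in $X$ and the strong local convergence $u_n\to u$ in $L^{p(x)}(B_{2R})$ enter; one also uses that $|u|_{L^{p(x)}(B_{2R})}\to 0$ times $C/R$). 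The remaining terms: as $R\to\infty$ the left side, raised to the power $q_\infty$ and using $(\mathcal{E}_\infty)$ so that $q(x)$ is close to $q_\infty$ on $\{|x|>R\}$, converges (along the $\limsup$ in $n$) to $\nu_\infty^{1/q_\infty}$ up to the factor coming from Proposition \ref{2.prop2}; similarly the right side converges to $\mu_\infty^{1/p_\infty}$ using $(\mathcal{E}_\infty)$ for $p$. The factor $\tfrac12$ arises exactly as in Chabrowski's version: when $\nu_\infty$ or $\mu_\infty$ equals $0$ or $1$ the modular-norm comparison in Proposition \ref{2.prop2}(ii)--(iii) loses a constant, and keeping $\tfrac12 S$ absorbs the worst case uniformly. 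I would organize the limiting passage by first proving the inequality with the cut-off present for each fixed $R$ after taking $\limsup_n$, then letting $R\to\infty$.

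The main obstacle I anticipate is the careful handling of the variable-exponent modular-to-norm conversions uniformly in $R$ and $n$ near infinity: because the exponents $p(x),q(x)$ are not constant, the quantities $|\psi_R u_n|_{L^{q(x)}(b,\mathbb{R}^N)}$ and $\rho(\psi_R u_n)=\int b|\psi_R u_n|^{q(x)}$ are related by the two-sided but exponent-dependent bounds of Proposition \ref{2.prop2}, and one needs $(\mathcal{E}_\infty)$ precisely to ensure that on the region $\{|x|>R\}$ the relevant exponents are within $o(1)$ of the constants $p_\infty,q_\infty$, so that $\rho(\psi_R u_n)^{1/q_\infty}$ and $|\psi_R u_n|_{L^{q(x)}(b,\mathbb{R}^N)}$ differ by $1+o(1)$ as $R\to\infty$. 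Making this rigorous — dealing with the cases where the modulars are below or above $1$ and showing the bound survives the double limit — is the delicate point; the factor $\tfrac12$ is the price paid for not tracking these constants exactly, and I would verify it suffices by checking that $\frac{1}{2}S \nu_\infty^{1/q_\infty}\le \mu_\infty^{1/p_\infty}$ holds in each borderline regime. Everything else (the cut-off estimates, the product rule, the vanishing of the crossed term) is routine given Lemmas \ref{III.critical.imb.loc} and \ref{III.critical.imb} and the propositions of Section \ref{Preliminaries}.
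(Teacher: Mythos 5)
Your overall strategy coincides with the paper's: the same cut-off decomposition for the mass-balance identities \eqref{3.ccp.infinity.mu}--\eqref{3.ccp.infinity.nu}, and the same application of \eqref{3.best.const} to $\phi_R u_n$ followed by modular--norm conversions under $(\mathcal{E}_\infty)$ for the inequality at infinity. However, you have misidentified the source of the factor $\tfrac12$ in \eqref{3.ccp.infinity.nu_mu}, and this is a genuine gap in the plan. Proposition \ref{2.prop2} does not ``lose a constant'' near modular values $0$ or $1$; it only perturbs the exponent, and under $(\mathcal{E}_\infty)$ those exponents are $p_\infty\pm\epsilon$ and $q_\infty\pm\epsilon$, which converge to $p_\infty$ and $q_\infty$ as $\epsilon\to 0^+$, so no multiplicative constant is lost at that step. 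The $\tfrac12$ actually arises because the right-hand side of the Sobolev inequality is the \emph{sum} of two norms, $\big||\phi_R\nabla u_n|\big|_{L^{p(x)}(B_R^c)}+|\phi_R u_n|_{L^{p(x)}(V,B_R^c)}$, and each summand is separately bounded by $\max\{M_{n,R}^{1/(p_\infty+\epsilon)},M_{n,R}^{1/(p_\infty-\epsilon)}\}$ with the \emph{same} combined modular $M_{n,R}=\int_{B_R^c}[|\nabla u_n|^{p(x)}+V(x)|u_n|^{p(x)}]\phi_R^{p(x)}\diff x$; adding the two bounds produces the factor $2$, which in the double limit gives $2\mu_\infty^{1/p_\infty}$. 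Trying instead to ``absorb a worst case'' of the modular--norm comparison gives you no definite constant, since that comparison degrades the exponent, not the constant.

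A second, smaller gap: your justification for the vanishing of the crossed term is not correct as stated, since $|u|_{L^{p(x)}(B_{2R})}$ does not tend to $0$ as $R\to\infty$ (it is nondecreasing in $R$), and $u$ need not belong to $L^{p(x)}(\mathbb{R}^N)$ because $V$ may decay. After the local compactness of Lemma \ref{III.critical.imb.loc} reduces $\limsup_{n}\big||u_n\nabla\phi_R|\big|_{L^{p(x)}(B_{2R}\setminus\overline{B_R})}$ to $\big||u\nabla\phi_R|\big|_{L^{p(x)}(B_{2R}\setminus\overline{B_R})}$, the correct way to send this quantity to $0$ as $R\to\infty$ is to use $u\in L^{p^\ast(x)}(\mathbb{R}^N)$ (Lemma \ref{III.critical.imb}) and H\"older's inequality against $|\nabla\phi_R|^{p}\in L^{N/p(x)}(B_{2R}\setminus\overline{B_R})$, whose norm is bounded uniformly in $R$ because $|\nabla\phi_R|\leq C/R$ while $|B_{2R}\setminus\overline{B_R}|$ is of order $R^N$; the remaining factor $\big||u|^{p}\big|_{L^{p^\ast(x)/p(x)}(B_{2R}\setminus\overline{B_R})}$ then vanishes as the tail of an integrable function. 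With these two corrections your plan matches the paper's proof.
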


\begin{remark}
	
	It is worth mentioning that in the case $\underset{x\in\mathbb{R}^N}{\essinf}\ V(x)>0$ then we can replace $X$ with $W_V^{1,p(x)}(\mathbb{R}^N)$ in the definition \eqref{3.best.const}, Theorems \ref{III.ccp} and \ref{III.ccp.infinity} 
	and hence, when $\Omega=\emptyset$ and $V=b\equiv 1$ in $\mathbb{R}^N,$ we recover the concentration-compactness principles by Fu-Zhang \cite{Fu-Zhang}. Furthermore, we provide the relation between the weights of ``the Dirac masses at infinity" that was not given in \cite{Fu-Zhang} but discussed in \cite{Saintier}.
\end{remark}

\subsection{Proofs of the concentration-compactness principles } We shall provide the proofs of Theorems~\ref{III.ccp} and \ref{III.ccp.infinity} after obtaining several auxiliary results. Theorem~\ref{III.ccp} will be proved in a similar fashion to that of \cite[Theorem 3.3]{Ky2} by employing the following lemmas.


\begin{lemma}\label{III.lemma.convergence}
	Let $\nu,\{\nu_n\}_{n\in\mathbb{N}}$ be nonnegative and finite Radon measures on $\mathbb{R}^N$ such that $\nu_n\overset{\ast }{\rightharpoonup } \nu$ in $\mathcal{M}(\mathbb{R}^N)$. Then, for any $r\in C_{+}(\mathbb{R}^N)$,
	$$
	|\phi|_{L^{r(x)}_{\nu_n}(\mathbb{R}^N)} \to
	|\phi|_{L^{r(x)}_{\nu}(\mathbb{R}^N)}, \quad \forall \phi\in C_c(\mathbb{R}^N).$$
\end{lemma}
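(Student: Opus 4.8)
The plan is to fix $\phi \in C_c(\mathbb{R}^N)$ and reduce the convergence of norms to the convergence of modulars, exploiting Proposition~\ref{2.prop2} (and Remark~\ref{2.measureable.spaces}, which extends those modular-norm relations to an arbitrary measure). For each $n$, write $\rho_n(\psi) := \int_{\mathbb{R}^N} |\psi|^{r(x)}\,\diff\nu_n$ and $\rho(\psi) := \int_{\mathbb{R}^N} |\psi|^{r(x)}\,\diff\nu$ for the modulars associated with $L^{r(x)}_{\nu_n}(\mathbb{R}^N)$ and $L^{r(x)}_{\nu}(\mathbb{R}^N)$. The first step is to observe that, since $\phi$ has compact support and is continuous, the function $x \mapsto |\phi(x)|^{r(x)}$ is continuous with compact support, hence lies in $C_c(\mathbb{R}^N) \subset C_0(\mathbb{R}^N)$; therefore the weak-$*$ convergence $\nu_n \overset{\ast}{\rightharpoonup} \nu$ gives immediately
\[
\rho_n(\phi) = \int_{\mathbb{R}^N} |\phi|^{r(x)}\,\diff\nu_n \longrightarrow \int_{\mathbb{R}^N} |\phi|^{r(x)}\,\diff\nu = \rho(\phi).
\]
More generally, for every fixed scalar $\lambda > 0$ the function $x \mapsto |\phi(x)/\lambda|^{r(x)}$ is likewise in $C_c(\mathbb{R}^N)$, so $\rho_n(\phi/\lambda) \to \rho(\phi/\lambda)$ as $n\to\infty$ for each $\lambda>0$.

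The second step is to pass from this pointwise-in-$\lambda$ convergence of modulars to convergence of the Luxemburg norms $|\phi|_{L^{r(x)}_{\nu_n}(\mathbb{R}^N)} = \inf\{\lambda>0 : \rho_n(\phi/\lambda)\le 1\}$. The key monotonicity fact is that $\lambda \mapsto \rho_n(\phi/\lambda)$ is nonincreasing and continuous on $(0,\infty)$ (by dominated convergence, since $|\phi/\lambda|^{r(x)}$ is dominated uniformly on the fixed compact $\operatorname{supp}\phi$ for $\lambda$ in a neighborhood), and similarly for $\rho$. One then argues as follows. If $\phi \equiv 0$ the statement is trivial, so assume $\phi \not\equiv 0$; then $\rho(\phi/\lambda) \to \infty$ as $\lambda \to 0^+$ and $\to 0$ as $\lambda \to \infty$ (again using $\phi \in C_c$ and $1 < r^- \le r^+ < \infty$), so $t_0 := |\phi|_{L^{r(x)}_{\nu}(\mathbb{R}^N)} \in (0,\infty)$ is the unique solution of $\rho(\phi/t_0) = 1$ when $\rho(\phi/t_0)$ actually attains the value $1$; if $\nu(\operatorname{supp}\phi)=0$ then $t_0=0$ and all $\rho_n$ are also eventually small, handled separately. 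Fix $\varepsilon>0$. By strict monotonicity near $t_0$, $\rho(\phi/(t_0+\varepsilon)) < 1 < \rho(\phi/(t_0-\varepsilon))$; since $\rho_n(\phi/(t_0\pm\varepsilon)) \to \rho(\phi/(t_0\pm\varepsilon))$, for $n$ large we get $\rho_n(\phi/(t_0+\varepsilon)) < 1$ and $\rho_n(\phi/(t_0-\varepsilon)) > 1$, whence $t_0 - \varepsilon \le |\phi|_{L^{r(x)}_{\nu_n}(\mathbb{R}^N)} \le t_0+\varepsilon$. This yields $|\phi|_{L^{r(x)}_{\nu_n}(\mathbb{R}^N)} \to t_0 = |\phi|_{L^{r(x)}_{\nu}(\mathbb{R}^N)}$.

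I expect the main obstacle to be the degenerate/boundary cases rather than the main-line argument: one must treat carefully the situation where $\rho(\phi/t_0)$ does not strictly cross $1$ at $t_0$ — this can happen because the measure $\nu$ is a general Radon measure that may charge only part of $\operatorname{supp}\phi$, or may have atoms, so $\lambda \mapsto \rho(\phi/\lambda)$ need not be strictly decreasing everywhere (it is constant $=0$ once $\lambda$ is large if $\nu$ does not see $\phi$, and it is eventually strictly decreasing where it is positive). The clean way around this is to note that $\rho(\phi/\lambda)$ is strictly decreasing on the interval where it is positive (because $r(x)>0$ and where $\phi\ne0$ the integrand strictly decreases in $\lambda$ wherever $\nu$ puts positive mass), and to split into the case $|\phi|_{L^{r(x)}_\nu}=0$ versus $>0$. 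A secondary, purely technical point is justifying continuity in $\lambda$ of the modulars uniformly enough to run the comparison; this follows from dominated convergence on the fixed compact set $\operatorname{supp}\phi$ using that $\nu_n$ are finite and $r$ is bounded, and I would simply invoke this without belaboring it.
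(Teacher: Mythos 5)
Your argument is correct and is essentially the proof the paper relies on: it defers to Bonder--Silva (Lemma 3.1 of \cite{Bonder}), which runs exactly this way --- test the weak-$*$ convergence against the continuous, compactly supported functions $x\mapsto|\phi(x)/\lambda|^{r(x)}$ to get convergence of the modulars for each fixed $\lambda>0$, then use monotonicity and continuity of $\lambda\mapsto\rho(\phi/\lambda)$ to transfer this to convergence of the Luxemburg norms via the $\varepsilon$-sandwich at $t_0$. The only (cosmetic) slip is that the degenerate case is $\nu(\{\phi\neq 0\})=0$ rather than $\nu(\operatorname{supp}\phi)=0$, since $\nu$ may charge the boundary of $\{\phi\neq 0\}$ where $\phi$ vanishes; with that reading your case split goes through, and atoms of $\nu$ cause no difficulty since strict monotonicity of the modular holds wherever it is positive.
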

The proof is referred to \cite[Proof of Lemma 3.1]{Bonder} by replacing $\Omega$ with $\mathbb{R}^N.$


\begin{lemma}\label{III.lemma.reserveHolder}
	Let $\mu,\nu$ be two nonnegative and finite Radon measures
	on $\mathbb{R}^N$, such that there exists some  constant $C>0$ such that
	\begin{equation*}
	|\phi|_{L_\nu^{r(x)}(\mathbb{R}^N)}\leq C|\phi|_{L_\mu^{p(x)}(\mathbb{R}^N)},\ \ \forall \phi\in C_c^\infty(\mathbb{R}^N)
	\end{equation*}
	for some $p,r\in C_+(\mathbb{R}^N)$ satisfying $\underset{x\in\mathbb{R}^N}{\inf}(r(x)-p(x))>0.$ Then, there exist an at most countable set $\{x_i\}_{i\in I}$ of distinct points in $\mathbb{R}^N$ and
	$\{\nu_i\}_{i\in I}\subset (0,\infty)$ such that
	$$
	\nu=\sum_{i\in I}\nu_i\delta_{x_i}.
	$$
\end{lemma}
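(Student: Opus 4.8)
The plan is to show that $\nu$ is a purely atomic measure by analyzing the reverse-Hölder inequality $|\phi|_{L_\nu^{r(x)}(\mathbb{R}^N)}\leq C|\phi|_{L_\mu^{p(x)}(\mathbb{R}^N)}$ locally. First I would reduce to the constant-exponent situation: since $\inf_{x}(r(x)-p(x))>0$ and both exponents are bounded, fix $\delta:=\inf_x(r(x)-p(x))/2>0$. For any point $x_0\in\mathbb{R}^N$ and any sufficiently small radius $\epsilon>0$, on the ball $B_\epsilon(x_0)$ we have $r(x)\geq p(x)+\delta \geq p^-+\delta$ and, by continuity, $p(x)\leq p(x_0)+\eta$ for $\eta$ as small as we like. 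Restricting test functions $\phi$ to be supported in $B_\epsilon(x_0)$ and using Proposition~\ref{2.prop2} (and Remark~\ref{2.measureable.spaces}) to pass between modulars and norms, I would derive from the hypothesis an inequality of the form
\begin{equation*}
\|\phi\|_{L_\nu^{r_0}(B_\epsilon(x_0))} \leq C'\, \|\phi\|_{L_\mu^{p_0}(B_\epsilon(x_0))}
\end{equation*}
for suitable constants $r_0>p_0$ (one can take $r_0$ a lower bound for $r$ and $p_0$ an upper bound for $p$ on the small ball, adjusting so that still $r_0>p_0$), valid for all $\phi\in C_c^\infty(B_\epsilon(x_0))$ and hence, by density, for all bounded Borel functions supported there.

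Next I would invoke the classical constant-exponent reverse-Hölder lemma of Lions: if $\mu,\nu$ are finite nonnegative Radon measures on a set with $\|\phi\|_{L_\nu^{r_0}}\leq C'\|\phi\|_{L_\mu^{p_0}}$ for all continuous compactly supported $\phi$ and $r_0>p_0$, then $\nu$ is concentrated on an at most countable set. Concretely, applying the inequality to $\phi=\chi_{E}$ (approximated by continuous functions) for Borel sets $E\subset B_\epsilon(x_0)$ gives $\nu(E)^{1/r_0}\leq C'\mu(E)^{1/p_0}$, i.e. $\nu(E)\leq (C')^{r_0}\mu(E)^{r_0/p_0}$; since $r_0/p_0>1$, this forces the non-atomic part of $\nu\!\restriction_{B_\epsilon(x_0)}$ to vanish (a non-atomic measure can be split into arbitrarily many pieces of arbitrarily small $\mu$-measure, and summing the resulting estimate yields $\nu$ of that piece $\leq (C')^{r_0}\mu(\text{piece})^{r_0/p_0-1}\mu(\text{whole})\to 0$), and the set of atoms of $\nu$ in $B_\epsilon(x_0)$ with weight exceeding any threshold is finite because $\nu$ is finite. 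Covering $\mathbb{R}^N$ by countably many such balls $B_{\epsilon_k}(x_0^{(k)})$, I conclude that the atoms of $\nu$ form an at most countable set $\{x_i\}_{i\in I}$ with weights $\nu_i:=\nu(\{x_i\})\in(0,\infty)$, and $\nu$ has no non-atomic part, hence $\nu=\sum_{i\in I}\nu_i\delta_{x_i}$.

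The main obstacle I anticipate is the bookkeeping in the localization step: passing from the variable-exponent inequality for $\phi$ on all of $\mathbb{R}^N$ to a clean constant-exponent inequality on a small ball requires care because the Luxemburg norm is not additive over disjoint supports and the modular-to-norm comparison in Proposition~\ref{2.prop2} is only one-sided on each side of $1$. The way around this is to work only with test functions (or Borel sets) supported in the small ball and normalized so that the relevant modular is $\leq 1$, which lets me use the favorable inequalities $\|\phi\|^{r_0}\leq\rho_\nu(\phi)\leq\|\phi\|^{p^-}$-type bounds consistently; scaling $\phi$ by a positive constant does not affect whether $\nu$ is atomic, so this normalization is harmless. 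Everything else is a routine invocation of the classical result, essentially as in \cite[Proof of Lemma 3.1]{Bonder} or \cite[Lemma 3.2]{Ky2}, now read over $\mathbb{R}^N$ instead of a bounded domain, with finiteness of $\mu$ and $\nu$ guaranteeing that only countably many atoms appear.
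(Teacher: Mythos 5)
Your proof is correct in substance, but it takes a genuinely different route from the paper's. The paper deliberately does \emph{not} argue directly: it reduces Lemma~\ref{III.lemma.reserveHolder} to the Bonder--Silva scheme \cite[Lemma 3.2]{Bonder}, whose key step is to pass from the two-measure inequality to a reverse H\"older inequality of $\nu$ \emph{against itself} (via $\nu\ll\mu$ and Radon--Nikodym), and then to invoke the self-inequality dichotomy of Lemma~\ref{III.le.est.measure} (proved in Appendix~B: every open set has $\nu$-measure $0$ or $>\delta_0$). You bypass that intermediate lemma entirely: after localizing to balls on which $r_0:=\inf r>\sup p=:p_0$, you apply the inequality to (approximated) characteristic functions to get $\nu(E)\le C^{r_0}\mu(E)^{r_0/p_0}$ and run the classical Lions splitting argument. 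This is the more elementary and self-contained path; what the paper's route buys is reuse of existing machinery and a proof of the self-inequality lemma that is needed anyway elsewhere in the CCP. Two small points of precision in your write-up: (1) the splitting argument shows that $\nu$ vanishes on the complement of the (countable) set of \emph{atoms of $\mu$} --- it is $\mu$, not $\nu$, that must be non-atomic on the pieces for the partition into sets of small $\mu$-measure to exist --- so the correct phrasing is ``$\nu$ is concentrated on the atoms of $\mu$'' rather than ``the non-atomic part of $\nu$ vanishes'' (the conclusion is the same, since $\nu(E)\le C^{r_0}\mu(E)^{r_0/p_0}$ already forces every atom of $\nu$ to be an atom of $\mu$); (2) when converting norms to modulars for $\chi_E$ you should use the \emph{local} bounds $p^{\pm}_{B},r^{\pm}_{B}$ on the small ball (legitimate because the modular of a function supported in $B$ only sees the exponent on $B$), and the set-inequality is only clean when $\mu(E),\nu(E)\le 1$, which suffices because the pieces in the splitting argument have small $\mu$-measure and one may intersect with the ball. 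Neither point is a gap; both are one-line repairs, and the approximation of $\chi_K$ by smooth functions is glossed over in the paper's own Appendix~B as well.
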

Since the proof of Lemma~\ref{III.lemma.reserveHolder} can be obtained by using a similar argument to that of \cite[Proof of Lemma 3.2]{Bonder}, taking into account the following lemma,  we omit it.

\begin{lemma}\label{III.le.est.measure}
	Let $\nu$ be a nonnegative and finite Radon measure
	on $\mathbb{R}^N$, such that there exists some  constant $C>0$ such that
	\begin{equation}\label{3.reverse.ineq}
	|\phi|_{L_\nu^{r(x)}(\mathbb{R}^N)}\leq C|\phi|_{L_\nu^{p(x)}(\mathbb{R}^N)},\ \ \forall \phi\in C_c^\infty(\mathbb{R}^N)
	\end{equation}
	for some $p,r\in C_+(\mathbb{R}^N)$ satisfying $\underset{x\in\mathbb{R}^N}{\inf}(r(x)-p(x))>0.$ 
	Then $\nu=0$ or there exist $\{x_i\}_{i=1}^n$ of distinct points in $\mathbb{R}^N$ and
	$\{\nu_i\}_{i=1}^n\subset (0,\infty)$ such that
	$$
	\nu=\sum_{i=1}^n\nu_i\delta_{x_i}.
	$$
\end{lemma}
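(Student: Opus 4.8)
The plan is to exploit the reverse-Hölder-type inequality \eqref{3.reverse.ineq}, in which the \emph{same} measure $\nu$ appears on both sides, to force $\nu$ to be a finite sum of Dirac masses. First I would record the elementary fact, following from Proposition~\ref{2.prop2} applied to the measure $\nu$ (cf. Remark~\ref{2.measureable.spaces}), that for a set $E$ with $\nu(E)<\infty$ one has
\[
|\chi_E|_{L^{r(x)}_\nu(\mathbb{R}^N)}\ \text{and}\ |\chi_E|_{L^{p(x)}_\nu(\mathbb{R}^N)}
\]
comparable to powers of $\nu(E)$; more precisely, testing \eqref{3.reverse.ineq} against (smooth approximations of) $\chi_E$ and using Proposition~\ref{2.prop2}(ii)--(iii) for the modulars $\int_E \diff\nu=\nu(E)$, one gets an inequality of the form $\nu(E)^{1/r^+_E}\lesssim C\,\nu(E)^{1/p^-_E}$ when $\nu(E)\le 1$ (and the reverse exponent roles when $\nu(E)\ge 1$), where $r^+_E,p^-_E$ denote the sup/inf of $r,p$ over a neighborhood of $E$. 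Since $\inf_{\mathbb{R}^N}(r-p)=:\sigma>0$, rearranging gives a \emph{lower bound} $\nu(E)\ge c_0>0$ for every set $E$ of positive, sufficiently small $\nu$-measure — this is the decisive dichotomy: a Borel set either is $\nu$-null or carries mass at least $c_0$.

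Next I would upgrade this to atomicity. If $x$ is any point, apply the above to shrinking balls $E=B_\varepsilon(x)$: either $\nu(B_\varepsilon(x))=0$ for small $\varepsilon$ (so $x$ is not in the support), or $\nu(B_\varepsilon(x))\ge c_0$ for all $\varepsilon>0$, whence $\nu(\{x\})=\lim_{\varepsilon\to0}\nu(B_\varepsilon(x))\ge c_0>0$, so $x$ is an atom. Thus every point of $\operatorname{supp}\nu$ is an atom of mass $\ge c_0$; since $\nu$ is finite, there are at most $\nu(\mathbb{R}^N)/c_0$ of them, say $x_1,\dots,x_n$ with weights $\nu_i:=\nu(\{x_i\})\in(0,\infty)$. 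It remains to see that $\nu$ has no further (nonatomic) part: writing $\nu=\sum_{i=1}^n\nu_i\delta_{x_i}+\tilde\nu$ with $\tilde\nu$ the nonatomic remainder, a finite nonatomic measure can be split into sets of arbitrarily small positive measure (standard Sierpiński/Lyapunov argument), contradicting the uniform lower bound $c_0$ unless $\tilde\nu=0$. Hence $\nu=\sum_{i=1}^n\nu_i\delta_{x_i}$, with the degenerate case $n=0$ meaning $\nu=0$.

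The one technical point needing care — and the step I expect to be the main obstacle — is making the test-function argument rigorous: \eqref{3.reverse.ineq} is only assumed for $\phi\in C_c^\infty(\mathbb{R}^N)$, not for $\chi_E$. I would handle this by choosing, for a fixed ball $B_\varepsilon(x)$, cutoff functions $\phi_\varepsilon\in C_c^\infty$ with $\chi_{B_{\varepsilon/2}(x)}\le\phi_\varepsilon\le\chi_{B_\varepsilon(x)}$, inserting them into \eqref{3.reverse.ineq}, bounding the left side below by the $B_{\varepsilon/2}(x)$-contribution and the right side above by the $B_\varepsilon(x)$-contribution, and then using the continuity of $p$ and $r$ to control the oscillation of the exponents on these small balls as $\varepsilon\to0$; the quantities $r^+_{B_\varepsilon(x)}$ and $p^-_{B_\varepsilon(x)}$ both converge to $r(x)$ and $p(x)$, and $r(x)-p(x)\ge\sigma>0$ keeps the exponent gap from collapsing, which is exactly what produces the uniform constant $c_0$ independent of $x$ and $\varepsilon$. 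This is also precisely the computation referred to in the remark that Lemma~\ref{III.lemma.reserveHolder} follows from \cite[Proof of Lemma 3.2]{Bonder} together with this lemma, so I would keep the presentation parallel to that source.
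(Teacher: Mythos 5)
Your overall architecture coincides with the paper's: extract from \eqref{3.reverse.ineq}, tested on characteristic functions of small sets, a uniform dichotomy ``$\nu(E)=0$ or $\nu(E)\ge c_0$'', and then conclude that $\nu$ is a finite sum of atoms (the paper delegates this last step to Lemma 3.3 of \cite{Bonder}, while you argue it directly via atoms plus a nonatomic remainder; either route is fine once the dichotomy is in hand). The problem is that the step you yourself single out as the main obstacle is resolved incorrectly, and as written it fails. Sandwiching $\chi_{B_{\varepsilon/2}(x)}\le\phi_\varepsilon\le\chi_{B_\varepsilon(x)}$ and estimating the two sides of \eqref{3.reverse.ineq} separately yields only a \emph{two-radius} inequality, $\nu(B_{\varepsilon/2}(x))^{1/(r|_B)^-}\le C\,\nu(B_\varepsilon(x))^{1/(p|_B)^+}$ with $B=B_\varepsilon(x)$. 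Writing $a_k:=\nu(B_{2^{-k}}(x))$, this is a recursion of the form $a_{k+1}\le C'\,a_k^{\lambda}$ with $\lambda>1$, which is perfectly consistent with $a_k\to 0$ (for instance $a_k=e^{-\lambda^k}$ satisfies $a_{k+1}=a_k^{\lambda}$). Hence it does \emph{not} force $\nu(B_\varepsilon(x))\in\{0\}\cup[c_0,\infty)$, the point $x$ need not be an atom, and the whole atomicity argument collapses. A second, more minor, slip: the heuristic inequality you display, $\nu(E)^{1/r_E^+}\lesssim C\,\nu(E)^{1/p_E^-}$, has the wrong extremes. For $\nu(E)\le 1$, Proposition~\ref{2.prop2} bounds the left norm \emph{below} by $\nu(E)^{1/r_E^-}$ and the right norm \emph{above} by $\nu(E)^{1/p_E^+}$, and it is precisely the resulting requirement $r_E^->p_E^+$ that forces the localization to small balls; with your version of the exponents no localization would ever be needed, which should have been a warning sign.

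The repair is to arrange that the \emph{same} set appears on both sides, which is exactly what the paper's ``approximation argument'' does: fix a compact set $A$, take smooth cutoffs $\phi_\eta$ with $\chi_A\le\phi_\eta\le\chi_{A_\eta}$, where $A_\eta$ is the $\eta$-neighbourhood of $A$, and let $\eta\to 0^+$ with $A$ \emph{fixed}. For every $\eta$ the left-hand side of \eqref{3.reverse.ineq} dominates $|\chi_A|_{L^{r(x)}_\nu(\mathbb{R}^N)}$, while the right-hand modulars converge to $\nu(A)$ by dominated convergence (finiteness of $\nu$ is used here), so by Proposition~\ref{2.prop3} the norms converge and \eqref{3.reverse.ineq} passes to $\phi=\chi_A$. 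Then, for $A$ contained in a ball $B$ small enough that $(r|_{B})^--(p|_{B})^+>\sigma/2$ (with $\sigma:=\inf(r-p)$), Proposition~\ref{2.prop2} gives $\nu(A)^{1/(r|_B)^-}\le (C+1)\,\nu(A)^{1/(p|_B)^+}$ and hence $\nu(A)=0$ or $\nu(A)\ge (C+1)^{-2p^+r^+/\sigma}=:c_0$, with $c_0$ uniform in $x$ and in the radius. A finite covering of an arbitrary compact set by such balls, together with inner regularity, then yields the dichotomy for all compact (hence all open) sets, after which your concluding argument --- or the citation of \cite{Bonder} --- goes through.
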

We provide a proof of this lemma in Appendix~\ref{AppendixB}.
The next lemma generalizes the Brezis-Lieb Lemma and the proof can  easily be modified as in the constant case (see e.g., \cite[Proof of Lemma 1.32]{Willem}).


\begin{lemma}\label{III.brezis-lieb}
	Let $\{f_n\}$ be a bounded sequence in $L^{r(x)}(m, \mathbb{R}^N)$ ($r\in C_{+}(\mathbb{R}^N)$, $m\in P_+(\mathbb{R}^N)$) and $f_n(x)\to f(x)$ for a.e. $x\in \mathbb{R}^N$. Then $f\in L^{r(x)}(m, \mathbb{R}^N)$ and
	$$
	\lim_{n\to\infty}\int_{ \mathbb{R}^N} \left|m|f_n|^{r(x)}
	-m|f_n-f|^{r(x)}-m|f|^{r(x)}\right|\diff x=0.
	$$
\end{lemma}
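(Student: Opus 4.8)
The plan is to follow the classical Brezis--Lieb argument adapted to the variable-exponent, weighted setting, as the statement itself suggests. First I would record the elementary pointwise inequality that, for every $\varepsilon>0$, there is a constant $C_\varepsilon>0$ such that for all $a,b\in\mathbb{R}$ and all exponents $t$ with $1<r^-\le t\le r^+<\infty$,
\begin{equation*}
\bigl||a+b|^{t}-|a|^{t}\bigr|\le \varepsilon|a|^{t}+C_\varepsilon|b|^{t}.
\end{equation*}
Applying this with $a=f_n-f$, $b=f$, $t=r(x)$, and multiplying by $m(x)$, one obtains the pointwise bound
\begin{equation*}
W_{n,\varepsilon}(x):=\Bigl(\bigl|m|f_n|^{r(x)}-m|f_n-f|^{r(x)}-m|f|^{r(x)}\bigr|-\varepsilon\, m|f_n-f|^{r(x)}\Bigr)^{+}\le (1+C_\varepsilon)\,m(x)|f(x)|^{r(x)}.
\end{equation*}

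Next I would check that $f\in L^{r(x)}(m,\mathbb{R}^N)$: this follows from Fatou's lemma applied to $m|f_n|^{r(x)}\to m|f|^{r(x)}$ a.e., together with the boundedness of $\{f_n\}$ in $L^{r(x)}(m,\mathbb{R}^N)$, which via Proposition \ref{2.prop2} (item on the modular, or Remark \ref{2.measureable.spaces}) bounds $\int_{\mathbb{R}^N}m|f_n|^{r(x)}\diff x$ uniformly. In particular $m|f|^{r(x)}\in L^1(\mathbb{R}^N)$, so the dominating function on the right-hand side above is integrable. Since $f_n\to f$ a.e., we have $W_{n,\varepsilon}(x)\to 0$ a.e., so the dominated convergence theorem gives $\int_{\mathbb{R}^N}W_{n,\varepsilon}\diff x\to 0$ as $n\to\infty$.

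Finally, from the definition of $W_{n,\varepsilon}$ and the triangle inequality,
\begin{equation*}
\int_{\mathbb{R}^N}\bigl|m|f_n|^{r(x)}-m|f_n-f|^{r(x)}-m|f|^{r(x)}\bigr|\diff x\le \int_{\mathbb{R}^N}W_{n,\varepsilon}\diff x+\varepsilon\int_{\mathbb{R}^N}m|f_n-f|^{r(x)}\diff x.
\end{equation*}
The last integral is bounded uniformly in $n$ (again by the uniform modular bound on $\{f_n\}$ and on $f$), say by $M$. Letting $n\to\infty$ yields $\limsup_{n\to\infty}\int_{\mathbb{R}^N}|\cdots|\diff x\le \varepsilon M$, and since $\varepsilon>0$ is arbitrary the limit is $0$. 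The only mildly delicate point — and the one I would state carefully rather than the routine estimates — is the uniform integrability/domination step: one must ensure the bound is genuinely in $L^1$, which is where passing from the variable-exponent norm bound on $\{f_n\}$ to a uniform bound on the modular $\int m|f_n|^{r(x)}\diff x$ (via Proposition \ref{2.prop2}) is used, and that $m|f_n-f|^{r(x)}$ has uniformly bounded $L^1$ norm so that the $\varepsilon$-term can be absorbed.
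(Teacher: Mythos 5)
Your proposal is correct and is precisely the argument the paper has in mind: the paper omits the proof, stating only that the classical Brezis--Lieb argument (Willem, Lemma 1.32) "can easily be modified," and your adaptation --- the $\varepsilon$-inequality uniform over $t\in[r^-,r^+]$, the dominated-convergence step for $W_{n,\varepsilon}$, and the passage from the norm bound to a uniform modular bound via Proposition~\ref{2.prop2} --- is exactly that modification. No gaps.
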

We are now in a position to give a proof of Theorem~\ref{III.ccp}. The following proof is carried out by the same scheme as in \cite[Proof of Theorem 3.3]{Ky2}.


\begin{proof}[\textbf{Proof of Theorem~\ref{III.ccp}}]
	
We only prove the case for $\Omega\ne\emptyset$ since the proof for $\Omega=\emptyset$ is similar and actually simpler.	We first obtain \eqref {3.ccp.form.nu}. For this, let $v_n=u_n-u$. Then, in view of Lemma~\ref{III.critical.imb.loc}, we have up to a subsequence
	\begin{eqnarray}\label{3.conv.of.v_n}
	\begin{cases}
	v_n(x) &\to  \quad 0 \quad \text{for a.e.}\quad  x\in\mathbb{R}^N,\\
	v_n &\rightharpoonup \quad 0 \quad \text{in}\quad  X.
	\end{cases}
	\end{eqnarray}
	Thus, invoking Lemma~\ref{III.brezis-lieb}, we get
	$$
	\lim_{n\to\infty}\int_{\mathbb{R}^N}\left|b|u_n|^{q(x)}
	-b|v_n|^{q(x)}-b|u|^{q(x)}\right|\diff x=0.$$
	Combining this with \eqref{3.conv.of.v_n}, we deduce
	$$
	\lim_{n\to\infty}\Big(\int_{\mathbb{R}^N} \phi b|u_n|^{q(x)}\diff x
	-\int_{\mathbb{R}^N}\phi b|v_n|^{q(x)} \diff x\Big)
	=\int_{\mathbb{R}^N} \phi b|u|^{q(x)} \diff x, \quad \forall  \phi\in C_0(\mathbb{R}^N),$$
	i.e.,
	\begin{equation}\label{3.w*-vn}
	b|v_n|^{q(x)}\overset{\ast }{\rightharpoonup }\bar{\nu}=\nu-b|u|^{q(x)}\quad \text{in} \ \ \mathcal{M}(\mathbb{R}^N).
	\end{equation}
	Obviously, $\{|\nabla v_n|^{p(x)}+V|v_n|^{p(x)}\}$ is bounded in $L^1(\mathbb{R}^N)$. So up to a subsequence, we have
	\begin{equation}
	|\nabla v_n|^{p(x)}+V|v_n|^{p(x)} \overset{\ast }{\rightharpoonup }\quad \bar{\mu}\quad \text{in}\quad \mathcal{M}(\mathbb{R}^N),
	\end{equation}
	for some finite nonnegative Radon measure $\bar{\mu}$ on $\mathbb{R}^N$. Let  $\phi\in C_c^\infty(\mathbb{R}^N).$ Applying \eqref{3.best.const} we have
	\begin{align*}
	S|\phi v_n|_{L^{q(x)}(b,\mathbb{R}^N)} &\leq |\phi v_n|_{L^{p(x)}(V,\mathbb{R}^N)}+\big||\nabla(\phi v_n)|\big|_{L^{p(x)}(\mathbb{R}^N)}\notag\\
	&\leq |\phi v_n|_{L^{p(x)}(V,\mathbb{R}^N)}+\big||\phi\nabla v_n|\big|_{L^{p(x)}(\mathbb{R}^N)}+\big||v_n\nabla\phi|\big|_{L^{p(x)}(\mathbb{R}^N)}.
	\end{align*}
	Set $\bar{\nu}_n:=b|v_n|^{q(x)}$ and $\bar{\mu}_n:=|\nabla v_n|^{p(x)}+V|v_n|^{p(x)}$ and fix $R>0$ such that 
	$\operatorname{supp}(\phi)\subset B_R.$ Then we get from the last estimate that
	\begin{equation}\label{3.proofccp.est.norm1}
	S|\phi|_{L_{\bar{\nu}_n}^{q(x)}(\mathbb{R}^N)} \leq |\phi|_{L_{\bar{\mu}_n}^{p(x)}(\mathbb{R}^N)}+C(\phi,R)|v_n|_{L^{p(x)}(B_R)},
	\end{equation}
	where $C(\phi,R):=|\phi|_{L^\infty(\mathbb{R}^N)}\left(|V|^{\frac{1}{p^+}}_{L^\infty(B_R)}+|V|^{\frac{1}{p^-}}_{L^\infty(B_R)}\right)+\big||\nabla\phi|\big|_{L^\infty(\mathbb{R}^N)}.$
		Since $v_n \rightharpoonup 0$ in $X,$ we infer $v_n \to 0$ in space $L^{p(x)}(B_R)$ in view of Lemma~\ref{III.critical.imb.loc}. From this and invoking Lemma~\ref{III.lemma.convergence}, we deduce from \eqref{3.proofccp.est.norm1} that
	\begin{equation}\label{3.proofccp.est.norm2}
	S|\phi|_{L_{\bar{\nu}}^{q(x)}(\mathbb{R}^N)} \leq |\phi|_{L_{\bar{\mu}}^{p(x)}(\mathbb{R}^N)}.
	\end{equation}
	Since \eqref{3.proofccp.est.norm2} holds for any $\phi\in C_c^\infty(\mathbb{R}^N),$ we deduce from Lemma~\ref{III.lemma.reserveHolder} that there exist an at most countable set $\{x_i\}_{i\in I}$ of distinct points in $\mathbb{R}^N$ and
	$\{\nu_i\}_{i\in I}\subset (0,\infty)$ such that
	$$
	\bar{\nu}=\sum_{i\in I}\nu_i\delta_{x_i}.
	$$
	That is, we have just obtained \eqref{3.ccp.form.nu}.
	
	Next, we show that $\{x_i\}_{i\in I}\subset \mathcal{A}$. Assume on the contrary that there is some $x_i\in \mathbb{R}^N\setminus\mathcal{A}$. Let $\delta>0$ be such that $\overline{B_{2\delta}(x_i)}\subset \mathbb{R}^N\setminus\mathcal{A},$ noting the closedness of $\mathcal{A}$.  Set $B:=B_\delta(x_i)$ then $\overline{B}\subset \mathbb{R}^N\setminus\mathcal{A}.$ We claim that
	\begin{equation}\label{3.proofccp.lim.un}
	\lim_{n\to\infty}\int_Bb|u_n-u|^{q(x)}\diff x=0.
	\end{equation}
	Indeed, we have that $u_n \rightharpoonup u$ in $W_V^{1,p(x)}(B).$ Since  $\overline{B}\subset\mathbb{R}^N\setminus\mathcal{A},$ we have
	$q(x)<p^\ast(x)$ for all $x\in\overline{B}$ and hence, we have if $B\cap \Omega=\emptyset,$ then
	
	\begin{equation}\label{imb.B}
	W_V^{1,p(x)}(B)=W^{1,p(x)}(B)\hookrightarrow\hookrightarrow L^{q(x)}(B)= L^{q(x)}(b,B)
	\end{equation} and hence \eqref{3.proofccp.lim.un} holds. If $B\cap \Omega\ne\emptyset$, then we find $\widetilde{\beta}\in C_+(\overline{B})$ such that $\widetilde{\beta}|_{\overline{\Omega\cap B}}=\beta$ and $q(x)<\frac{\widetilde{\beta}(x)-1}{\widetilde{\beta}(x)}p^\ast(x)$ for all $x\in \overline{B}.$ 
	 Clearly, $b\in L^{\widetilde{\beta}(x)}(B)$ and thus we obtain
	$$W_V^{1,p(x)}(B)=W^{1,p(x)}(B)\hookrightarrow\hookrightarrow L^{q(x)}(b,B)$$
in view of \cite[Theorem 2.1]{Fan.JMAA2005}; hence, we deduce \eqref{3.proofccp.lim.un}.
	Therefore, 
	$$\int_{B}b|u_n|^{q(x)}\diff x\to \int_{B}b|u|^{q(x)}\diff x,$$
	in view of Lemma~\ref{III.brezis-lieb}.	From this and the fact that $\nu (B)\leq \liminf_{n\to \infty}\int_{B}b|u_n|^{q(x)}\diff x$ (see \cite[Proposition 1.203]{Fonseca}), we obtain $\nu (B)\leq \int_{B}b|u|^{q(x)}\diff x.$
	Meanwhile, from \eqref{3.ccp.form.nu}, we have
	$$\nu (B)\geq \int_{B}b|u|^{q(x)}\diff x+\nu_i>\int_{B}b|u|^{q(x)}\diff x,$$
	a contradiction. So $\{x_i\}_{i\in I}\subset \mathcal{A}$.

	Last, we obtain \eqref{3.ccp.nu_mu} and \eqref{3.ccp.form.mu}. Let  $\phi\in C_c^\infty(\mathbb{R}^N)$ be arbitrary.  By \eqref{3.best.const} we have
	\begin{align*}
	S|\phi u_n|_{L^{q(x)}(b,\mathbb{R}^N)} &\leq |\phi u_n|_{L^{p(x)}(V,\mathbb{R}^N)}+\big||\nabla(\phi u_n)|\big|_{L^{p(x)}(\mathbb{R}^N)}\\
	&\leq |\phi u_n|_{L^{p(x)}(V,\mathbb{R}^N)}+\big||\phi\nabla u_n|\big|_{L^{p(x)}(\mathbb{R}^N)}+\big||u_n\nabla\phi |\big|_{L^{p(x)}(\mathbb{R}^N)}\\
	&\leq\big||\phi\nabla u_n|\big|_{L^{p(x)}(\mathbb{R}^N)}+|\phi u|_{L^{p(x)}(V,\mathbb{R}^N)}+\big||u\nabla\phi|\big|_{L^{p(x)}(\mathbb{R}^N)}\\
	&{}\quad +|\phi( u_n-u)|_{L^{p(x)}(V,\mathbb{R}^N)}+\big||(u_n-u)\nabla\phi|\big|_{L^{p(x)}(\mathbb{R}^N)}.
	\end{align*}
	Hence,
	\begin{align*}\label{3.proofccp.est.un}
	S|\phi u_n|_{L^{q(x)}(b,\mathbb{R}^N)} &\leq\big||\phi\nabla u_n|\big|_{L^{p(x)}(\mathbb{R}^N)}+|\phi u|_{L^{p(x)}(V,\mathbb{R}^N)}+\big||u\nabla\phi|\big|_{L^{p(x)}(\mathbb{R}^N)}\notag\\
	&{}\quad + C(\phi,R')|u_n-u|_{L^{p(x)}(B_{R^{'}})}, 
	\end{align*}
	where $R^{'}>0$ is taken such that $\operatorname{supp}(\phi)\subset B_{R^{'}}$ and $$C(\phi,R'):=|\phi|_{L^\infty(\mathbb{R}^N)}\left(|V|^{\frac{1}{p^+}}_{L^\infty(B_{R'})}+|V|^{\frac{1}{p^-}}_{L^\infty(B_{R'})}\right)+\big||\nabla\phi|\big|_{L^\infty(\mathbb{R}^N)}.$$ Letting $n\to\infty$ in the last estimate with the same argument as that for \eqref{3.proofccp.est.norm2}, we have
	\begin{equation}\label{3.proofccp.estforsingular}
	S|\phi|_{L_{\nu}^{q(x)}(\mathbb{R}^N)} \leq |\phi|_{L_{\mu}^{p(x)}(\mathbb{R}^N)}+|\phi u|_{L^{p(x)}(V,\mathbb{R}^N)}+\big||u\nabla\phi|\big|_{L^{p(x)}(\mathbb{R}^N)}.
	\end{equation}
	Let $\eta$ be in $C_c^\infty(\mathbb{R}^N)$ such that $\chi_{B_{\frac{1}{2}}}\leq \eta\leq \chi_{B_1}.$  Suppose that $I\ne \emptyset$ and fix $i\in I.$ For $\epsilon>0$, set  $\phi_{i,\epsilon}(x):=
	\eta(\frac{x-x_i}{\epsilon})$. By the same argument as \cite[Proof of Theorem 3.3]{Ky2}, we apply \eqref{3.proofccp.estforsingular} for $\phi=\phi_{i,\epsilon}$ and then let $\epsilon \to 0^+$ to obtain \eqref{3.ccp.nu_mu} with $\mu_i=\mu(\{x_i\}).$ The relation \eqref{3.ccp.form.mu} is also obtained as in \cite[Proof of Theorem 3.3]{Ky2}.
\end{proof}
We close this section by proving Theorem~\ref{III.ccp.infinity}. The proof is quite close to that of \cite[Theorem 4.1]{Saintier}, which uses the ideas in \cite{Chabrowski}.

\begin{proof}[Proof of Theorem~\ref{III.ccp.infinity}]
Let $\phi$ be in $C^\infty(\mathbb{R})$ such that  $\phi(t)\equiv 0$ on $|t|\leq 1,$ $\phi(t)\equiv 1$ on $|t|\geq 2$ and $0\leq \phi\leq 1,$ $|\phi'|\leq 2$ in $\mathbb{R}.$ For each $R>0$, set $\phi_R(x):=\phi(|x|/R)$ for all $x\in\mathbb{R}^N.$ We then decompose
\begin{align}\label{3.ccp.infinity.decompose}
\int_{ \mathbb{R}^N}\big[|\nabla u_n|^{p(x)}&+V(x)|u_n|^{p(x)}\big]\diff x=\int_{ \mathbb{R}^N}\left[|\nabla u_n|^{p(x)}+V(x)|u_n|^{p(x)}\right]\phi_R^{p(x)}\diff x\notag\\
&+\int_{ \mathbb{R}^N}\left[|\nabla u_n|^{p(x)}+V(x)|u_n|^{p(x)}\right](1-\phi_R^{p(x)})\diff x.
\end{align}
For the first term of the right-hand side of \eqref{3.ccp.infinity.decompose} we notice that
\begin{align*}
\int_{\{|x|>2R\}}\big[|\nabla u_n|^{p(x)}&+V(x)|u_n|^{p(x)}\big]\diff x\\
&\leq \int_{ \mathbb{R}^N}\big[|\nabla u_n|^{p(x)}+V(x)|u_n|^{p(x)}\big]\phi_R^{p(x)}\diff x\\
&\quad\quad\quad\leq \int_{\{|x|>R\}}\left[|\nabla u_n|^{p(x)}+V(x)|u_n|^{p(x)}\right]\diff x.
\end{align*}
Thus we obtain
\begin{equation}\label{3.ccp.infinity.mu_infinity}
\mu_\infty=\lim_{R\to\infty}\underset{n\to\infty}{\lim\sup}\int_{\mathbb{R}^N}\left[|\nabla u_n|^{p(x)}+V(x)|u_n|^{p(x)}\right]\phi_R^{p(x)}\diff x.
\end{equation}
For the second term of the right-hand side of \eqref{3.ccp.infinity.decompose}, we notice that $1-\phi_R^{p(x)}$ is a continuous function with compact support in $\mathbb{R}^N$. Hence,
\begin{equation}\label{3.ccp.infinity.phiR}
\lim_{n\to\infty}\int_{\mathbb{R}^N}\left[|\nabla u_n|^{p(x)}+V(x)|u_n|^{p(x)}\right](1-\phi_R^{p(x)})\diff x=\int_{\mathbb{R}^N}(1-\phi_R^{p(x)})\diff\mu.
\end{equation}
Clearly, $\lim_{R\to\infty}\int_{\mathbb{R}^N}\phi_R^{p(x)}\diff\mu=0$ in view of the Lebesgue dominated convergence theorem. By this and \eqref{3.ccp.infinity.phiR}, we deduce
\begin{equation*}
\lim_{R\to\infty}\lim_{n\to \infty }\int_{\mathbb{R}^N}\left[|\nabla u_n|^{p(x)}+V(x)|u_n|^{p(x)}\right](1-\phi_R^{p(x)})\diff x=\mu(\mathbb{R}^N).
\end{equation*}
Using this and \eqref{3.ccp.infinity.mu_infinity}, we obtain \eqref{3.ccp.infinity.mu} by taking the limit superior as $n\to\infty$ and then letting $R\to\infty$ in \eqref{3.ccp.infinity.decompose}. In the same fashion, to obtain \eqref{3.ccp.infinity.nu} we decompose
\begin{equation}\label{3.ccp.infinity.decompose2}
\int_{ \mathbb{R}^N}b(x)|u_n|^{q(x)}\diff x=\int_{ \mathbb{R}^N}b(x)|u_n|^{q(x)}\phi_R^{q(x)}\diff x+\int_{ \mathbb{R}^N}b(x)|u_n|^{q(x)}(1-\phi_R^{q(x)})\diff x.
\end{equation}
Arguing as above, we obtain
\begin{equation}\label{3.ccp.infinity.nu_infinity}
\nu_\infty=\lim_{R\to\infty}\underset{n\to\infty}{\lim\sup}\int_{\mathbb{R}^N}b(x)|u_n|^{q(x)}\phi_R^{q(x)}\diff x,
\end{equation}
and using \eqref{3.ccp.infinity.nu_infinity}, we easily obtain \eqref{3.ccp.infinity.nu} from \eqref{3.ccp.infinity.decompose2}.

Next, we prove \eqref{3.ccp.infinity.nu_mu} when $(\mathcal{E}_\infty)$ is additionally assumed. 
It is easy to see that $\phi_Rv\in X$ for all $R>0$ and all $v\in X$. Thus by \eqref{3.best.const}, we have
\begin{align*}
S|\phi_R u_n|_{L^{q(x)}(b,\mathbb{R}^N)} &\leq \big||\nabla(\phi_R u_n)|\big|_{L^{p(x)}(\mathbb{R}^N)}+|\phi_R u_n|_{L^{p(x)}(V,\mathbb{R}^N)}\\
&\leq \big||\phi_R\nabla u_n|\big|_{L^{p(x)}(\mathbb{R}^N)}+\big||u_n\nabla\phi_R |\big|_{L^{p(x)}(\mathbb{R}^N)}+|\phi_R u_n|_{L^{p(x)}(V,\mathbb{R}^N)}.
\end{align*}
Hence,
\begin{align}\label{3.ccp.infinity.est1}
S|\phi_R u_n|_{L^{q(x)}(b,B_{R}^c)} \leq \big||\phi_R\nabla u_n|\big|_{L^{p(x)}(B_{R}^c)}&+|\phi_R u_n|_{L^{p(x)}(V,B_{R}^c)}\notag\\
&+\big||u_n\nabla\phi_R|\big|_{L^{p(x)}(B_{2R}\setminus \overline{B_{R}})}.
\end{align}
Let $\epsilon$ be arbitrary in $(0,1).$ By $(\mathcal{E}_\infty)$, there exists $R_0=R_0(\epsilon)>0$ such that
\begin{equation}\label{3.ccp.infinity.p,q_infinity}
|p(x)-p_\infty|<\epsilon,\ |q(x)-q_\infty|<\epsilon,\quad \forall |x|>R_0.
\end{equation}
For $R>R_0$ given, let $\{u_{n_k}\}$ be a subsequence of $\{u_n\}$ such that
\begin{equation}\label{3.ccp.infinity.limsup}
\lim_{k\to \infty }\int_{B_R^c}b(x)|u_{n_k}|^{q(x)}\phi_R^{q(x)}\diff x=\underset{n\to\infty}{\lim\sup}\int_{B_R^c}b(x)|u_n|^{q(x)}\phi_R^{q(x)}\diff x.
\end{equation}
Invoking Proposition~\ref{2.prop2} and taking  into account \eqref{3.ccp.infinity.p,q_infinity}, we have
\begin{align*}
&|\phi_R u_{n_k}|_{L^{q(x)}(b,B_{R}^c)}\\
&\geq\min\left\{\left(\int_{B_R^c}b(x)|u_{n_k}|^{q(x)}\phi_R^{q(x)}\diff x\right)^{\frac{1}{q_\infty+\epsilon}},\left(\int_{B_R^c}b(x)|u_{n_k}|^{q(x)}\phi_R^{q(x)}\diff x\right)^{\frac{1}{q_\infty-\epsilon}}\right\}.
\end{align*}
From this, \eqref{3.ccp.infinity.nu_infinity} and \eqref{3.ccp.infinity.limsup} we deduce
\begin{equation}\label{3.ccp.infinity.est2}
\lim_{R\to\infty}\underset{{k}\to\infty}{\lim\sup}|\phi_R u_{n_k}|_{L^{q(x)}(b,B_{R}^c)}\geq \min\biggl\{\nu_\infty^{\frac{1}{q_\infty+\epsilon}} , \nu_\infty^{\frac{1}{q_\infty-\epsilon}} \biggr\}.
\end{equation}
On the other hand, invoking Proposition~\ref{2.prop2} and taking  into account \eqref{3.ccp.infinity.p,q_infinity} again, we have
\begin{align*}
&\big||\phi_R\nabla u_{n_k}|\big|_{L^{p(x)}(B_{R}^c)}\\
&\leq\max\left\{\left(\int_{B_R^c}|\nabla u_{n_k}|^{p(x)}\phi_R^{p(x)}\diff x\right)^{\frac{1}{p_\infty+\epsilon}},\left(\int_{B_R^c}|\nabla u_{n_k}|^{p(x)}\phi_R^{p(x)}\diff x\right)^{\frac{1}{p_\infty-\epsilon}}\right\} \\
&\leq\max\biggl\{\left(\int_{B_R^c}\left[|\nabla u_{n_k}|^{p(x)}+V(x)|u_{n_k}|^{p(x)}\right]\phi_R^{p(x)}\diff x\right)^{\frac{1}{p_\infty+\epsilon}},\\
&{}\hspace*{5cm}\left(\int_{B_R^c}\left[|\nabla u_{n_k}|^{p(x)}+V(x)|u_{n_k}|^{p(x)}\right]\phi_R^{p(x)}\diff x\right)^{\frac{1}{p_\infty-\epsilon}}\biggr\}.
\end{align*}
Similarly we have
\begin{align*}
&\big|\phi_R u_{n_k}\big|_{L^{p(x)}(V,B_{R}^c)}\\
&\leq\max\biggl\{\left(\int_{B_R^c}\left[|\nabla u_{n_k}|^{p(x)}+V(x)|u_{n_k}|^{p(x)}\right]\phi_R^{p(x)}\diff x\right)^{\frac{1}{p_\infty+\epsilon}},\\
&{}\hspace*{5cm}\left(\int_{B_R^c}\left[|\nabla u_{n_k}|^{p(x)}+V(x)|u_{n_k}|^{p(x)}\right]\phi_R^{p(x)}\diff x\right)^{\frac{1}{p_\infty-\epsilon}}\biggr\}.
\end{align*}
Hence,
\begin{align*}
&\big||\phi_R\nabla u_{n_k}|\big|_{L^{p(x)}(B_{R}^c)}+\big|\phi_R u_{n_k}\big|_{L^{p(x)}(V,B_{R}^c)}\\
&\leq2\max\biggl\{\left(\int_{B_R^c}\left[|\nabla u_{n_k}|^{p(x)}+V|u_{n_k}|^{p(x)}\right]\phi_R^{p(x)}\diff x\right)^{\frac{1}{p_\infty+\epsilon}},\\
&{}\hspace*{5cm}\left(\int_{B_R^c}\left[|\nabla u_{n_k}|^{p(x)}+V|u_{n_k}|^{p(x)}\right]\phi_R^{p(x)}\diff x\right)^{\frac{1}{p_\infty-\epsilon}}\biggr\}.
\end{align*}
This and \eqref{3.ccp.infinity.mu_infinity} yield
\begin{equation}\label{3.ccp.infinity.est3}
\lim_{R\to\infty}\underset{{k}\to\infty}{\lim\sup}\left[\big||\phi_R\nabla u_{n_k}|\big|_{L^{p(x)}(B_{R}^c)}+\big|\phi_R u_{n_k}\big|_{L^{p(x)}(V,B_{R}^c)}\right]\leq 2\max\biggl\{\mu_\infty^{\frac{1}{p_\infty+\epsilon}} , \mu_\infty^{\frac{1}{p_\infty-\epsilon}} \biggr\}.
\end{equation}
Finally, we analyze the last term in \eqref{3.ccp.infinity.est1}. 
By Lemma~\ref{III.critical.imb.loc}, we have that $X\hookrightarrow\hookrightarrow L^{p(x)}(B_{2R}\setminus \overline{B_R}).$ Hence,
\begin{equation}\label{3.ccp.infinity.est4}
\underset{{k}\to\infty}{\lim\sup}\big||u_{n_k}\nabla\phi_R|\big|_{L^{p(x)}(B_{2R}\setminus \overline{B_{R}})}=\big||u\nabla\phi_R|\big|_{L^{p(x)}(B_{2R}\setminus \overline{B_{R}})}.
\end{equation}
Note that $u\in L^{p^\ast(x)}(\mathbb{R}^N)$ in view of Lemma~\ref{III.critical.imb}. Applying Proposition~\ref{2.prop1}, we obtain
\begin{align}\label{3.ccp.infinity.est5}
\int_{B_{2R}\setminus \overline{B_{R}}}\big|u\nabla\phi_R\big|^{p(x)}\diff x&\leq 2\big||u|^{p}\big|_{L^{\frac{p^\ast(x)}{p(x)}}(B_{2R}\setminus \overline{B_{R}})}\big||\nabla\phi_R|^{p}\big|_{L^{\frac{N}{p(x)}}(B_{2R}\setminus \overline{B_{R}})}\notag\\
&\leq 2c \big||u|^{p}\big|_{L^{\frac{p^\ast(x)}{p(x)}}(B_{2R}\setminus \overline{B_{R}})},
\end{align}
where $c$ is a constant independent of $R.$ Here we have used the fact that 
$$\big||\nabla\phi_R|^{p}\big|_{L^{\frac{N}{p(x)}}(B_{2R}\setminus \overline{B_{R}})}\leq \left(1+\int_{B_{2R}\setminus \overline{B_{R}}}|\nabla\phi_R|^N\diff x \right)^{\frac{p^+}{N}}\leq \text{constant},\ \forall R>0.$$
 Hence, the estimate \eqref{3.ccp.infinity.est5} infers
$$\lim_{R\to\infty}\int_{B_{2R}\setminus \overline{B_{R}}}\big|u\nabla\phi_R\big|^{p(x)}\diff x=0.$$
 Hence,
 \begin{equation}\label{3.ccp.infinity.est6}
 \lim_{R\to\infty}\big||u\nabla\phi_R|\big|_{L^{p(x)}(B_{2R}\setminus \overline{B_{R}})}=0.
 \end{equation}
Utilizing \eqref{3.ccp.infinity.est2}-\eqref{3.ccp.infinity.est4} and \eqref{3.ccp.infinity.est6} we obtain from \eqref{3.ccp.infinity.est1} that
$$S\min\biggl\{\nu_\infty^{\frac{1}{q_\infty+\epsilon}} , \nu_\infty^{\frac{1}{q_\infty-\epsilon}} \biggr\}\leq 2\max\biggl\{\mu_\infty^{\frac{1}{p_\infty+\epsilon}} , \mu_\infty^{\frac{1}{p_\infty-\epsilon}} \biggr\}.$$
Letting $\epsilon\to 0^+$ in the last inequality, we obtain \eqref{3.ccp.infinity.nu_mu}.
\end{proof}

\section{The existence of solutions}

\subsection{Statements of the existence results}
In this section, we investigate the existence and multiplicity of solutions to the following problem
\begin{eqnarray}\label{Eq}
\begin{cases}
-\operatorname{div}\left(|\nabla u| ^{p(x)-2}\nabla u\right)+V(x)|u|^{p(x)-2}u=\lambda a(x)|u|^{r(x)-2}u+\theta b(x)|u|^{q(x)-2}u  &\text{in}~ \mathbb{R}^N ,\\
u\in X,
\end{cases}
\end{eqnarray} 
with $p,V,q,b$ satisfying $(\mathcal{P})$, $(\mathcal{C}),$ and $(\mathcal{E}_\infty)$ and $r,a$ satisfying
\begin{itemize}
	\item[$(\mathcal{S})$]  $a\in L_+^{\frac{p^\ast(x)}{p^\ast(x)-r(x)}}(\mathbb{R}^N)$ for some $r \in C_+(\mathbb{R}^N)$ such that
	$\underset{x\in\mathbb{R}^N}{\inf}[p^\ast(x)-r(x)]>0.$
\end{itemize}
The condition $(\mathcal{S})$ is crucial for our variational argument due to the following result.
\begin{lemma}\label{IV.compact}
	Under assumptions $(\mathcal{P})$ and $(\mathcal{S})$, we have the following compact continuous imbedding
	\begin{equation*}
	X \hookrightarrow \hookrightarrow L^{r(x) }(a,\mathbb{R}^N ).
	\end{equation*}
\end{lemma}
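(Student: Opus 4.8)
The plan is to split $\mathbb{R}^N$ into the bounded set $\Omega$ and its complement and handle each region using a different mechanism for compactness. First I would observe that, by Lemma~\ref{III.critical.imb}, $X \hookrightarrow L^{p^\ast(x)}(\mathbb{R}^N)$ continuously, and by H\"older's inequality (Proposition~\ref{2.prop1}, applied in the form of Proposition~\ref{2.Sim-Kim}) together with the integrability assumption $a\in L^{p^\ast(x)/(p^\ast(x)-r(x))}(\mathbb{R}^N)$ in $(\mathcal{S})$, the imbedding $X \hookrightarrow L^{r(x)}(a,\mathbb{R}^N)$ is at least \emph{continuous}: for $u\in X$ one writes $\int a|u|^{r(x)}\diff x$ and pairs $a$ with exponent $\tfrac{p^\ast(x)}{p^\ast(x)-r(x)}$ against $|u|^{r(x)}$ with exponent $\tfrac{p^\ast(x)}{r(x)}$, so that $\big||u|^{r}\big|$ in the latter space is controlled by a power of $|u|_{L^{p^\ast(x)}(\mathbb{R}^N)}$ via Proposition~\ref{2.Sim-Kim}, hence by a power of $\|u\|$.

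The compactness upgrade is the real content. Let $\{u_n\}$ be bounded in $X$ with $u_n\rightharpoonup u$; after subtracting $u$ we may assume $u_n\rightharpoonup 0$, and by Lemma~\ref{III.critical.imb.loc} we have $u_n\to 0$ in $L^{p(x)}_{\loc}(\mathbb{R}^N)$ and $u_n(x)\to 0$ a.e.\ (along a subsequence). I would prove $\int_{\mathbb{R}^N} a|u_n|^{r(x)}\diff x\to 0$ by an $\varepsilon$-splitting. Fix $\varepsilon>0$.

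\textbf{Tail at infinity.} Since $a\in L^{p^\ast(x)/(p^\ast(x)-r(x))}(\mathbb{R}^N)$, the quantity $\int_{B_R^c} a(x)^{p^\ast(x)/(p^\ast(x)-r(x))}\diff x\to 0$ as $R\to\infty$, hence $|a|_{L^{p^\ast(x)/(p^\ast(x)-r(x))}(B_R^c)}<\varepsilon$ for $R$ large; by the H\"older estimate above (localized to $B_R^c$) and the uniform bound $|u_n|_{L^{p^\ast(x)}(\mathbb{R}^N)}\le C$, we get $\int_{B_R^c}a|u_n|^{r(x)}\diff x\le C'\varepsilon$ uniformly in $n$.

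\textbf{Bounded part.} On the fixed ball $B_R$, I would further split the domain of integration according to the size of $|u_n|$. On the set $\{|u_n|\le M\}\cap B_R$ one has $a|u_n|^{r(x)}\le a\,(M^{r^+}+1)$, and since $u_n\to 0$ a.e.\ on $B_R$ while $a\,(M^{r^+}+1)\in L^1(B_R)$ (as $a\in L^{p^\ast(x)/(p^\ast(x)-r(x))}(B_R)\subset L^1(B_R)$ because $B_R$ has finite measure and the exponent is $\ge 1$), the dominated convergence theorem gives $\int_{\{|u_n|\le M\}\cap B_R}a|u_n|^{r(x)}\diff x\to 0$. On the set $\{|u_n|>M\}\cap B_R$, I would use H\"older with the triple $\tfrac{p^\ast(x)}{p^\ast(x)-r(x)}$ for $a$, $\tfrac{p^\ast(x)}{r(x)}$ for $|u_n|^{r(x)}$ exactly as before but now integrated over $E_{n,M}:=\{|u_n|>M\}\cap B_R$, so the bound is $C\,|a|_{L^{\cdot}(E_{n,M})}$; by Chebyshev, $|E_{n,M}|\le M^{-p^-}\int_{B_R}|u_n|^{p^-}\diff x$, which is small uniformly in $n$ if $M$ is large (using the local bound on $u_n$), hence $|a|_{L^{\cdot}(E_{n,M})}<\varepsilon$ uniformly in $n$ by absolute continuity of the integral of $a^{p^\ast(x)/(p^\ast(x)-r(x))}$. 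Combining the three pieces and letting $\varepsilon\to 0$ yields $\int_{\mathbb{R}^N}a|u_n|^{r(x)}\diff x\to 0$, which via Proposition~\ref{2.prop3} (Remark~\ref{2.measureable.spaces}, with $\mu = a\diff x$) is exactly $u_n\to 0$ in $L^{r(x)}(a,\mathbb{R}^N)$.

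The main obstacle I anticipate is the interplay between the variable exponent and the two-sided splitting: one must be careful that all H\"older pairings are legitimate (the exponents $\tfrac{p^\ast(x)}{p^\ast(x)-r(x)}$ and $\tfrac{p^\ast(x)}{r(x)}$ must lie in $C_+$, which is guaranteed by $\inf(p^\ast-r)>0$ and $r^->1$), and that the passage from "modular small" to "norm small" is done through Proposition~\ref{2.prop2}/\ref{2.Sim-Kim} rather than naively — a modular that is uniformly small does give a uniformly small norm, but one should track whether the relevant norms are above or below $1$. A cleaner alternative for the bounded-ball part is to invoke directly a known compact imbedding $W^{1,p(x)}(B_R)\hookrightarrow\hookrightarrow L^{s(x)}(B_R)$ for some $s$ with $r(x)<s(x)<p^\ast(x)$ and then absorb $a$ by H\"older with a third exponent; this avoids the Chebyshev truncation entirely and may be the slicker route, but it requires $a$ to have slightly better local integrability than stated, so the truncation argument above is the safe one.
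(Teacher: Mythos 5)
Your proposal is correct and follows essentially the same route as the paper: continuity via the critical imbedding $X\hookrightarrow L^{p^\ast(x)}(\mathbb{R}^N)$ combined with the H\"older/Sim--Kim estimates, and compactness via a.e.\ convergence together with uniform smallness of $\int a|u_n|^{r(x)}$ on tails $B_R^c$ and on sets of small measure. The only cosmetic difference is that the paper packages the last step as the Vitali convergence theorem, whereas you unroll it by hand with the truncation at level $M$.
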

The proof of this lemma is provided in 	Appendix~\ref{AppendixC}.

\medskip
We say that $ u\in X $ is a (weak) solution of problem \eqref{Eq} if
\begin{align*}
\int_{\mathbb{R}^N}|\nabla u|^{p(x)-2}&\nabla u\cdot \nabla \varphi \diff x+\int_{\mathbb{R}^N} V(x)|u|^{p(x)-2}u\varphi \diff x\\
&=\lambda \int_{\mathbb{R}^N}a(x)|u|^{r(x)-2}u\varphi \diff x+\theta\int_{\mathbb{R}^N}b(x)|u|^{q(x)-2}u\varphi \diff x
\end{align*}
for all $\varphi\in X$. The energy functional $J:X \to\mathbb{R}$ associated with problem \eqref{Eq} is defined as
\begin{align*}
J(u) =\int_{\mathbb{R}^N}\frac{1}{p(x) }| \nabla u| ^{p(x) }\diff x&+\int_{\mathbb{R}^N}\frac{V(x)}{p(x) }|u| ^{p(x) }\diff x\\
&-\lambda\int_{\mathbb{R}^N}\frac{a(x)}{r(x)}|u|^{r(x)}\diff x-\theta\int_{\mathbb{R}^N}\frac{b(x)}{q(x)}|u|^{q(x)}\diff x.
\end{align*}
Note that the definitions above are well-defined thanks to Lemmas~\ref{III.critical.imb} and \ref{IV.compact}. We shall investigate the existence of solutions to problem \eqref{Eq} in two cases: the case $r^->p^+$ (the $p(\cdot)$-superlinear problem) and the case $r^+\leq p^-$ (the concave-convex type problem). We first state our main existence result for the $p(\cdot)$-superlinear problem.

\begin{theorem} \label{IV.main1}
	Assume that $(\mathcal{P}),$ $(\mathcal{C}),$ $(\mathcal{E}_\infty),$  and $(\mathcal{S})$ hold such that $p^+<\min\{r^-,q^-\}.$
	\begin{itemize}
		\item[(i)]   For each given $\theta >0$, there exists $\lambda_0=\lambda_0(\theta)>0$ such that for any $\lambda>\lambda_0 ,$  problem \eqref{Eq} has a nontrivial nonnegative solution $\overline{u}$ with $J(\overline{u})>0$.
		\item[(ii)]   For each given $\lambda>0$, there exists $\theta_0=\theta_0(\lambda)>0$ such that for any $\theta \in (0,\theta_0 ),$  problem \eqref{Eq} has a nontrivial nonnegative solution  $\overline{u}$ with $J(\overline{u})>0$.
	\end{itemize}
\end{theorem}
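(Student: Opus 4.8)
The plan is to apply the Mountain Pass Theorem to the energy functional $J$, using the two concentration–compactness principles (Theorems~\ref{III.ccp} and \ref{III.ccp.infinity}) to recover compactness below a critical energy threshold. First I would record the variational setup: under the hypotheses, Lemmas~\ref{III.critical.imb} and \ref{IV.compact} guarantee that $J\in C^1(X,\mathbb{R})$ and that its derivative has the expected form; replacing $a(x)|u|^{r(x)-2}u$ and $b(x)|u|^{q(x)-2}u$ by $a(x)(u^+)^{r(x)-1}$ and $b(x)(u^+)^{q(x)-1}$ (truncation at the positive part) lets us conclude that any nontrivial critical point is nonnegative, so I will work with this truncated functional throughout and drop the superscripts for brevity.

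Next I would verify the Mountain Pass geometry. Since $p^+<\min\{r^-,q^-\}$, the subcritical/critical nonlinear terms are superhomogeneous: using Proposition~\ref{2.prop2}, Lemma~\ref{III.critical.imb} and Lemma~\ref{IV.compact} one gets, for $\|u\|$ small, $J(u)\geq c_1\|u\|^{p^+}-c_2\lambda\|u\|^{r^-}-c_3\theta\|u\|^{q^-}\geq\rho>0$ on a small sphere $\|u\|=\delta$ (here one must be slightly careful about whether the $b$-term is treated via the critical imbedding $X\hookrightarrow L^{q(x)}(b,\mathbb{R}^N)$ from \eqref{3.best.const}); and for a fixed $e\in X\setminus\{0\}$, $J(te)\to-\infty$ as $t\to\infty$ because the leading power is $>p^+$. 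This gives a mountain pass value $c_{\lambda,\theta}=\inf_{\gamma\in\Gamma}\max_{t\in[0,1]}J(\gamma(t))\geq\rho>0$. The key quantitative point is to bound $c_{\lambda,\theta}$ from above by an explicit level that goes to $0$ as $\lambda\to\infty$ (for part (i)) or as $\theta\to0^+$ (for part (ii)); this is done by estimating $\max_{t\geq0}J(te)$ for a suitable fixed $e$ and extracting the dependence on the parameters.

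Then comes the heart of the argument: a local Palais–Smale condition. Given a $(PS)_c$ sequence $\{u_n\}$ with $c<c^*$ for a threshold $c^*$ of the form (roughly) $\left(\tfrac1{p^+}-\tfrac1{q^-}\right)\left(\tfrac{S}{2}\right)^{\frac{N}{p_-}}$ — the precise form coming from combining \eqref{3.ccp.nu_mu}, \eqref{3.ccp.infinity.nu_mu} with the homogeneity gap — I would first show $\{u_n\}$ is bounded (standard, using $J(u_n)-\tfrac1{\theta^-}\langle J'(u_n),u_n\rangle$ with an appropriate constant between $1/p^+$ and $1/q^-$), pass to a subsequence with $u_n\rightharpoonup u$, apply Theorems~\ref{III.ccp} and \ref{III.ccp.infinity} to get the measures $\mu,\nu,\mu_\infty,\nu_\infty$ and atoms $\{x_i\}_{i\in I}\subset\mathcal A$, and then test $J'(u_n)$ against $\phi_{i,\epsilon}u_n$ (cutoffs around $x_i$) and against $\phi_R u_n$ (cutoff near infinity), letting $\epsilon\to0$ and $R\to\infty$. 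Because $a\in L^{p^\ast(x)/(p^\ast(x)-r(x))}$ gives compactness of the $r$-term (Lemma~\ref{IV.compact}), the concave term contributes nothing to the concentration; one deduces $\nu_i=\mu_i$ (or a comparable relation) forcing, via \eqref{3.ccp.nu_mu}, either $\nu_i=0$ or $\nu_i\geq(S/1)^{\cdots}$, and the energy bound $c<c^*$ rules out the latter, so $I=\emptyset$; similarly $\nu_\infty=\mu_\infty=0$ using \eqref{3.ccp.infinity.nu_mu}. This yields $b|u_n|^{q(x)}\to b|u|^{q(x)}$ in $L^1$, hence (with the compact $r$-term and convexity/monotonicity of the $p(x)$-operator — an $(S_+)$-type argument) $u_n\to u$ strongly in $X$. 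The main obstacle is precisely this step: carefully managing the variable-exponent bookkeeping (all the $p^\pm$, $q^\pm$ splittings from Proposition~\ref{2.prop2}) so that the threshold $c^*$ is clean enough that the upper bound on $c_{\lambda,\theta}$ from the previous paragraph actually lies below it for $\lambda$ large (resp. $\theta$ small), and handling the ``loss at infinity'' via Theorem~\ref{III.ccp.infinity} with the factor $\tfrac12$ in \eqref{3.ccp.infinity.nu_mu}.

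Finally, with the local $(PS)$ condition in hand on the relevant range of parameters, the Mountain Pass Theorem produces a critical point $\overline u$ at level $c_{\lambda,\theta}>0$, which is nontrivial (since $J(\overline u)=c_{\lambda,\theta}>0=J(0)$) and nonnegative (by the truncation). For part (i) I choose $\lambda_0(\theta)$ so that $\lambda>\lambda_0$ forces $c_{\lambda,\theta}<c^*$; for part (ii), fixing $\lambda$, I choose $\theta_0(\lambda)$ so that $\theta<\theta_0$ forces the same. I expect the routine parts (smoothness of $J$, mountain pass geometry, boundedness of $(PS)$ sequences, the $(S_+)$ argument) to be quick, and the delicate part to be the quantitative comparison between the mountain pass level and the compactness threshold, together with the infinity analysis.
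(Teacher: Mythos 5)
Your proposal is correct and follows essentially the same route as the paper: truncate to $J_1$, establish the mountain pass geometry (Lemma~\ref{IV.geometry1}), prove a local $(PS)_c$ condition below a threshold of the form $\bigl(\tfrac{1}{p^+}-\tfrac{1}{q^-}\bigr)2^{-(qh)^+}\min\{S^{(qh)^+},S^{(qh)^-}\}\min\{\theta^{-h^+},\theta^{-h^-}\}$ via Theorems~\ref{III.ccp} and \ref{III.ccp.infinity}, the relation $\mu_i=\theta\nu_i$, the compactness of the $a$-term and the $(S_+)$-property, and then tune $\lambda$ (resp.\ $\theta$) so the mountain pass level falls below that threshold. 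The only slight imprecision is in part (ii), where the mechanism is that the mountain pass level stays bounded while the threshold blows up like $\theta^{-h^-}$ as $\theta\to 0^+$, rather than the level itself tending to $0$; this does not affect the argument.
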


For the concave-convex type problem, we have the following results.


\begin{theorem} \label{IV.main2}
	Assume that $(\mathcal{P}),$ $(\mathcal{C}),$ $(\mathcal{E}_\infty),$ $(\mathcal{P}),$ and $(\mathcal{S})$ hold such that $r^+\leq p^-\leq p^+<q^-$ and  $a^{\frac{q}{q-r}}b^{-\frac{r}{q-r}}\in L^1(\mathbb{R}^N).$
	\begin{itemize}
		\item[(i)]   If $r^+<p^-\leq p^+<q^-$ or $r^+=p^-\leq p^+<q^-$ with $\{x\in \mathbb{R}^N: r(x)<p(x)\}\ne \emptyset$, then for each given $\theta >0$, there exists $\lambda_{\ast}=\lambda_{\ast}(\theta)>0$ such that for any $\lambda\in(0,\lambda_{\ast}) ,$  problem \eqref{Eq} has a nontrivial nonnegative solution with negative energy.
		\item[(ii)]   If $r^+<p^-\leq p^+<q^-$ and $(\frac{q}{p})^+<(\frac{q}{r})^-$, then for each given $\lambda>0$, there exists $\theta_{\ast}=\theta_{\ast}(\lambda)>0$ such that for any $\theta \in (0,\theta_{\ast} ),$  problem \eqref{Eq} has a nontrivial nonnegative solution with negative energy.
	\end{itemize}
\end{theorem}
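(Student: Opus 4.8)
\emph{Overview and reduction.} The strategy is to realize the solution as a local minimizer of the energy functional $J$ in a negative‑energy ``valley'', via Ekeland's variational principle together with a localized Palais--Smale condition extracted from the concentration--compactness principles (Theorems~\ref{III.ccp} and \ref{III.ccp.infinity}). To obtain a \emph{nonnegative} solution I first replace, in \eqref{Eq}, the terms $|u|^{r(x)-2}u$ and $|u|^{q(x)-2}u$ by $(u^+)^{r(x)-1}$ and $(u^+)^{q(x)-1}$; the associated functional (still denoted $J$) is $C^1$ on $X$ by Lemmas~\ref{III.critical.imb} and \ref{IV.compact}, and testing a critical point $u$ against $u^-\in X$ gives $\int_{\mathbb{R}^N}\big(|\nabla u^-|^{p(x)}+V|u^-|^{p(x)}\big)\diff x=0$, hence $u^-=0$ (as $\essinf_{B_R}V>0$ for every $R>0$), so that $u\ge0$ solves \eqref{Eq}. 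It therefore suffices to produce a nontrivial critical point of this $J$ with $J(u)<0$.

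\emph{Valley geometry.} Using the embeddings $X\hookrightarrow L^{q(x)}(b,\mathbb{R}^N)$ and $X\hookrightarrow\hookrightarrow L^{r(x)}(a,\mathbb{R}^N)$, the modular--norm relations of Propositions~\ref{2.prop2} and \ref{2.Sim-Kim}, and the (variable‑exponent) Young inequality $\lambda a|u|^{r(x)}\le\varepsilon\, b|u|^{q(x)}+C_\varepsilon\, a^{\frac{q(x)}{q(x)-r(x)}}b^{-\frac{r(x)}{q(x)-r(x)}}$, whose remainder is integrable by assumption, I would estimate $J$ from below on balls. Note that the hypotheses force $r^-<p^+<q^-$ (when $r^+=p^-$, the condition $\{x:r(x)<p(x)\}\ne\emptyset$ makes $p$ non‑constant, hence $r^-<p^+$). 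One then shows there are $\rho>0$ and $\alpha>0$ with $J|_{\partial B_\rho}\ge\alpha$ while $\inf_{\overline{B_\rho}}J<0$: in case (i) fix $\theta$ and take $\lambda<\lambda_*(\theta)$ with a suitable $\rho$; in case (ii) fix $\lambda$ and take $\rho$ in an interval which is nonempty as soon as $\theta<\theta_*(\lambda)$, the condition $(q/p)^+<(q/r)^-$ being precisely what keeps that interval open. The strict negativity $\inf_{\overline{B_\rho}}J<0$ is seen by letting $t\to0^+$ in $J(tu_0)$ for a fixed $u_0\ne0$ --- chosen with $\operatorname{supp}u_0\subset\{x:r(x)<p(x)-\delta\}$ in the borderline subcase --- since the concave term then dominates.

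\emph{Localized Palais--Smale and conclusion.} Ekeland's principle on $\overline{B_\rho}$ gives $\{u_n\}\subset\overline{B_\rho}$ with $J(u_n)\to m:=\inf_{\overline{B_\rho}}J\in(-\infty,0)$ and $J'(u_n)\to0$ in $X^*$; since $m<0<\alpha$ this $\{u_n\}$ lies in the open ball for $n$ large, and it is bounded. Along a subsequence, $u_n\rightharpoonup u$ in $X$, $|\nabla u_n|^{p(x)}+V|u_n|^{p(x)}\overset{\ast}{\rightharpoonup}\mu$ and $b|u_n|^{q(x)}\overset{\ast}{\rightharpoonup}\nu$, and Theorems~\ref{III.ccp}, \ref{III.ccp.infinity} (the latter via $(\mathcal{E}_\infty)$) produce $\{x_i\}\subset\mathcal{A}$, masses $\mu_i,\nu_i$ satisfying \eqref{3.ccp.form.nu}--\eqref{3.ccp.nu_mu}, and $\mu_\infty,\nu_\infty$ satisfying \eqref{3.ccp.infinity.mu}--\eqref{3.ccp.infinity.nu_mu}. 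Testing $J'(u_n)$ against $\eta\!\big(\tfrac{\cdot-x_i}{\varepsilon}\big)u_n$ and against $\phi_R u_n$, and using $u_n\to u$ in $L^{r(x)}(a,\mathbb{R}^N)$ (Lemma~\ref{IV.compact}, so the concave term puts no mass at $x_i$ or at infinity), yields $\mu_i=\theta\nu_i$ and $\mu_\infty=\theta\nu_\infty$. A termwise passage to the limit in $J(u_n)$ (the $a$‑term by compactness; the $p$‑ and $q$‑terms through $\mu$, $\nu$ and the values $p_\infty$, $q_\infty$ at infinity) gives
\[
m=J(u)+\sum_i\theta\nu_i\Big(\tfrac1{p(x_i)}-\tfrac1{q(x_i)}\Big)+\theta\nu_\infty\Big(\tfrac1{p_\infty}-\tfrac1{q_\infty}\Big),
\]
in which every term after $J(u)$ is $\ge0$ since $p(x_i)<q(x_i)$ on $\mathcal{A}$ and $p_\infty<q_\infty$. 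As $u\in\overline{B_\rho}$ forces $J(u)\ge m$, all $\nu_i$ and $\nu_\infty$ must vanish and $J(u)=m$; hence $b|u_n|^{q(x)}\to b|u|^{q(x)}$ in $L^1$ (Lemma~\ref{III.brezis-lieb}), so from $\langle J'(u_n),u_n-u\rangle\to0$ (using $\|u_n\|\le\rho$ and $J'(u_n)\to0$ in $X^*$) and the $(S_+)$‑property of the principal part we get $u_n\to u$ in $X$; therefore $J'(u)=0$ and $J(u)=m<0=J(0)$, so $u\ne0$. With the first paragraph this produces the nonnegative nontrivial solution of \eqref{Eq} with negative energy.

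\emph{Main obstacle.} The heart of the proof is the valley geometry: under variable exponents one must extract the radius $\rho$ and the thresholds $\lambda_*(\theta)$ (resp. $\theta_*(\lambda)$) jointly from the modular estimates and the integrability of $a^{q/(q-r)}b^{-r/(q-r)}$, and in case (ii), where $\lambda$ is fixed and $\rho$ is necessarily large, the inequality $(q/p)^+<(q/r)^-$ is exactly what guarantees a nonempty admissible range for $\rho$ once $\theta$ is small enough. By contrast, once this geometry is secured the exclusion of concentration costs little, since the Ekeland sequence stays in $\overline{B_\rho}$ and hence $J(u)\ge m$, which kills all the Dirac masses and the mass at infinity.
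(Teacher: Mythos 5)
Your proposal is correct in outline and shares the paper's overall scheme (truncated functional $J_1$, the valley geometry of Lemmas~\ref{IV.geometry2.1}--\ref{IV.geometry2.2}, Ekeland's principle on $\overline{B_\rho}$, and the concentration--compactness machinery of Theorems~\ref{III.ccp}--\ref{III.ccp.infinity}), but it handles the compactness step by a genuinely different route. The paper goes through the quantitative local Palais--Smale condition of Lemma~\ref{IV.PS2}: one bounds the concave term via Young's inequality and $a^{\frac{q}{q-r}}b^{-\frac{r}{q-r}}\in L^1$, producing an explicit positive threshold (for $\lambda$, resp.\ $\theta$, small) above the negative Ekeland level, which then forces $I=\emptyset$ and $\nu_\infty=0$. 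You instead derive $\mu_i=\theta\nu_i$ and $\mu_\infty=\theta\nu_\infty$ exactly as in Lemma~\ref{IV.PS1} and then exploit the minimization structure: the weak limit $u$ lies in the weakly closed ball $\overline{B_\rho}$, so $J_1(u)\ge m$, and each concentration contributes $\theta\nu_i\big(\tfrac{1}{p(x_i)}-\tfrac{1}{q(x_i)}\big)>0$ (resp.\ $\theta\nu_\infty\big(\tfrac{1}{p_\infty}-\tfrac{1}{q_\infty}\big)>0$) to the energy, which is impossible. This is cleaner for Theorem~\ref{IV.main2}: it needs no threshold bookkeeping, no constant $K$, and the boundedness of the sequence is free. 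What the paper's route buys in exchange is a Palais--Smale condition valid for \emph{arbitrary} sequences at levels below the threshold, which is then reused for the genus-theoretic multiplicity result (Theorem~\ref{IV.main3}); your local-minimum argument would not transfer to that setting.

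Two points in your sketch deserve care, though neither is a gap. First, the energy expansion should be stated as an inequality, $m\ge J_1(u)+\sum_i\theta\nu_i\big(\tfrac{1}{p(x_i)}-\tfrac{1}{q(x_i)}\big)+\theta\nu_\infty\big(\tfrac{1}{p_\infty}-\tfrac{1}{q_\infty}\big)$, since \eqref{3.ccp.form.mu} is only an inequality; moreover the weights $\tfrac{1}{p(x)}$, $\tfrac{1}{q(x)}$ are not in $C_0(\mathbb{R}^N)$, so the passage to the limit must be done by splitting with the cutoffs $\phi_R$, using $(\mathcal{E}_\infty)$ near infinity, and passing to a diagonal subsequence along which the $\limsup$'s defining $\mu_\infty$ and $\nu_\infty$ are genuine limits (otherwise the $\liminf$ of the gradient term need not see $\mu_\infty/p_\infty$, and the sign of the infinity contribution could be lost). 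Second, before applying Theorems~\ref{III.ccp}--\ref{III.ccp.infinity} to the Ekeland sequence one should note, as the paper does, that $\langle J_1'(u_n),u_n^-\rangle\to0$ forces $u_n^-\to0$ in $X$, so that one may work with $v_n=u_n^+$ throughout.
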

\begin{theorem} \label{IV.main3}
	Assume that $(\mathcal{P}),$ $(\mathcal{C}),$ $(\mathcal{E}_\infty),$  and $(\mathcal{S})$ hold such that $r^+<p^-\leq p^+<q^-$ and $a^{\frac{q}{q-r}}b^{-\frac{r}{q-r}}\in L^1(\mathbb{R}^N).$
	\begin{itemize}
		\item[(i)]   For each given $\theta >0$, there exists $\lambda^{\ast}=\lambda^{\ast}(\theta)>0$ such that for any $\lambda\in(0,\lambda^{\ast}) ,$  problem \eqref{Eq} has infinitely many solutions with negative energy.
		\item[(ii)]  Assume also that if $(\frac{q}{p})^+<(\frac{q}{r})^-,$ then for each given $\lambda>0,$ there exists $\theta^{\ast}=\theta^{\ast}(\lambda)>0$ such that for any $\theta \in (0,\theta^{\ast} ),$ problem \eqref{Eq} has infinitely many solutions with negative energy.
	\end{itemize}
\end{theorem}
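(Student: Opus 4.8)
The plan is to derive Theorem~\ref{IV.main3} from a symmetric (genus-based) critical point theorem applied to a \emph{truncation} of the energy functional $J$, in the spirit of García Azorero–Peral Alonso \cite{Garcia}, Ambrosetti–Brezis–Cerami \cite{Ambrosetti2} and Ho–Sim \cite{Ky2}; a truncation is unavoidable because, since $p^+<q^-$, the functional $J$ is unbounded below on $X$. Using Lemmas~\ref{III.critical.imb} and~\ref{IV.compact} together with Proposition~\ref{2.prop2}, for $\|u\|\le1$ one has a bound of the form $J(u)\ge c_1\|u\|^{p^+}-\lambda c_2\|u\|^{r^-}-\theta c_3\|u\|^{q^-}$ with $c_i>0$. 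Choosing $R_1>0$ (depending on $\theta$ in part~(i), on $\lambda$ in part~(ii)) and an even cutoff $\tau\in C^1([0,\infty),[0,1])$ with $\tau\equiv1$ on $[0,R_1/2]$ and $\tau\equiv0$ on $[R_1,\infty)$, set
\[
\widetilde J(u):=\int_{\mathbb{R}^N}\tfrac{|\nabla u|^{p(x)}}{p(x)}\diff x+\int_{\mathbb{R}^N}\tfrac{V(x)|u|^{p(x)}}{p(x)}\diff x-\lambda\int_{\mathbb{R}^N}\tfrac{a(x)|u|^{r(x)}}{r(x)}\diff x-\theta\,\tau(\|u\|)\int_{\mathbb{R}^N}\tfrac{b(x)|u|^{q(x)}}{q(x)}\diff x .
\]
Then $\widetilde J\in C^1(X,\mathbb{R})$ is even, $\widetilde J(0)=0$, and $\widetilde J\equiv J$ on $\{u:\|u\|\le R_1/2\}$. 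Using $r^+<p^-$ and $p^+<q^-$ together with $\tau\equiv0$ for large norms one checks that $\widetilde J$ is coercive, hence bounded below; and, crucially, by taking $\lambda$ small (resp. $\theta$ small) one arranges that
\[
\{u\in X:\widetilde J(u)<0\}\subset\{u\in X:\|u\|<R_1/2\},
\]
so that on the negative sublevel set one has $\widetilde J=J$ and $\widetilde J'=J'$ (there $\tau$ is locally constant).

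Next one verifies the Palais–Smale condition for $\widetilde J$ at every level $c<0$. By coercivity a $(\mathrm{PS})_c$ sequence $\{u_n\}$ is bounded, and eventually $\widetilde J(u_n)<0$, hence $\|u_n\|<R_1/2$ and $\widetilde J'(u_n)=J'(u_n)$. Passing to $u_n\rightharpoonup u$ in $X$ and applying the concentration–compactness principles Theorems~\ref{III.ccp} and~\ref{III.ccp.infinity}, the concave term disappears from the atomic analysis by Lemma~\ref{IV.compact}, so each atom satisfies $\mu_i=\theta\nu_i$ (and similarly $\mu_\infty=\theta\nu_\infty$); combined with \eqref{3.ccp.nu_mu} and \eqref{3.ccp.infinity.nu_mu} this forces a uniform positive lower bound on any nonzero $\nu_i$ or $\nu_\infty$. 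Evaluating $c=\lim_{n}\bigl[\widetilde J(u_n)-\tfrac1{q^-}\langle\widetilde J'(u_n),u_n\rangle\bigr]$ and using $r^-<p^+<q^-$ then shows that such concentration would force $c\ge c_0>0$, a contradiction; here the smallness of $\lambda$ (part~(i)), resp. of $\theta$ together with $(q/p)^+<(q/r)^-$ (part~(ii)), and the integrability condition $a^{q/(q-r)}b^{-r/(q-r)}\in L^1(\mathbb{R}^N)$ enter exactly as in \cite{Ky2}. Consequently $u_n\to u$ in $L^{q(x)}(b,\mathbb{R}^N)$, and the $(S_+)$-property of the $p(x)$-Laplacian part yields $u_n\to u$ strongly in $X$.

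Then one produces, for each $k\in\mathbb{N}$, a set of Krasnoselskii genus $\ge k$ on which $\widetilde J$ is strictly negative. Take a $k$-dimensional subspace $E_k\subset X$ spanned by functions in $C_c^\infty(\mathbb{R}^N)$; all norms on $E_k$ are equivalent and, since $a>0$ a.e., $u\mapsto\int_{\mathbb{R}^N}a|u|^{r(x)}\diff x$ attains a positive minimum on $\{u\in E_k:\|u\|=1\}$, so by Proposition~\ref{2.prop2}, for $0<\|u\|=\rho\le1$ with $u\in E_k$ one has $\widetilde J(u)=J(u)\le C_k\rho^{p^-}-\lambda c_k\rho^{r^+}$. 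Since $r^+<p^-$, there exist $\rho_k\in(0,R_1/2)$ and $\delta_k>0$ with $\widetilde J\le-\delta_k$ on $A_k:=\{u\in E_k:\|u\|=\rho_k\}$, a set homeomorphic to $S^{k-1}$, whence $\gamma(A_k)=k$. As $\widetilde J$ is even, of class $C^1$, bounded below, satisfies $(\mathrm{PS})_c$ for every $c<0$, and admits these $A_k$, a genus version of the symmetric mountain pass lemma (as used in \cite{Ky2}) furnishes infinitely many critical points $u_k$ of $\widetilde J$ with $\widetilde J(u_k)<0$. Since $\widetilde J(u_k)<0$, the inclusion established above gives $\|u_k\|<R_1/2$, where $\widetilde J\equiv J$ and $\widetilde J'\equiv J'$; hence $J'(u_k)=0$ and $J(u_k)=\widetilde J(u_k)<0$, so \eqref{Eq} has infinitely many (nontrivial) solutions with negative energy. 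This proves part~(i); part~(ii) is obtained verbatim with the roles of $\lambda$ and $\theta$ exchanged, the hypothesis $(q/p)^+<(q/r)^-$ being used only in the Palais–Smale step.

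The main obstacle is precisely the Palais–Smale analysis at negative levels: the critical exponent on $\mathcal{A}$ and the possible loss of mass at infinity are genuine and must be excluded via \emph{both} concentration–compactness principles, which is where the smallness of the relevant parameter is essential. The truncation is the device that both decouples this compactness question from the global lack of coercivity of $J$ and confines the negative critical points to the region where $\widetilde J\equiv J$; by contrast, the construction of the high-genus negative sets is the soft part of the argument.
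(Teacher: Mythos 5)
Your truncation-plus-genus argument is essentially the paper's own route: the proof of Theorem~\ref{IV.main3} given there is a one-line reduction to \cite[Theorem 4.5]{Ky2}, whose argument is precisely the Garc\'ia Azorero--Peral truncation combined with Krasnoselskii genus, fed by the same ingredients you use (the geometry of Lemmas~\ref{IV.geometry2.1}--\ref{IV.geometry2.2}, the local $\textup{(PS)}_c$ condition of Lemma~\ref{IV.PS2} whose threshold is made positive by the smallness of $\lambda$, resp.\ of $\theta$ under $(\frac{q}{p})^+<(\frac{q}{r})^-$, and Remark~\ref{IV.J} permitting one to work with the even functional $J$). The only points to polish are standard technicalities: one should either verify that $u\mapsto\tau(\|u\|)$ is of class $C^1$ (the Luxemburg norm is differentiable away from the origin) or truncate via the modular instead, and the cutoff is more safely taken equal to $1$ on $[0,R_0]$ and $0$ on $[R_1,\infty)$ for the two positive roots $R_0<R_1$ of the comparison function $g(t)=c_1t^{p^+}-\lambda c_2t^{r^-}-\theta c_3t^{q^-}$, rather than switching at $R_1/2$.
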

\begin{remark}\label{V.cri-prob.ex}
	(i) 
	When $\Omega=\emptyset,$ $q(x)=p^\ast(x)$ for all $x\in \mathbb{R}^N,$ and $0<b_1 \le b(x) \le b_2$ a.e. in $\mathbb{R}^N$ for some positive constants $b_1,b_2$, the condition  $a^{\frac{q}{q-r}}b^{-\frac{r}{q-r}}\in L^1(\mathbb{R}^N)$ holds automatically if assumption $(\mathcal{S})$ holds.
	
	(ii) When $V\in L^\infty(\mathbb{R}^N)$ and $p\in C^1(\mathbb{R}^N)$, all nontrivial nonnegative solutions above are positive due to the strong maximum principle for $p(x)$-Laplacian (see \cite [Proposition 3.1]{Fan4}).
	
\end{remark}
\subsection{Proofs of existence results}
For  $u\in X,$ we denote $u^+=\max \{u,0\}, u^-=\max \{-u,0\}$.
To find nonnegative solutions, instead of working with $J$, we will argue with the functional $J_1:X \to\mathbb{R}$ defined as
\begin{align*}
J_1(u) =\int_{\mathbb{R}^N }\frac{1}{p(x) }| \nabla u| ^{p(x) }\diff x&+\int_{\mathbb{R}^N}\frac{V(x)}{p(x) }|u| ^{p(x) }\diff x\\
&-\lambda\int_{\mathbb{R}^N}\frac{a(x)}{r(x)}(u^+)^{r(x)}\diff x-\theta\int_{\mathbb{R}^N}\frac{b(x)}{q(x)}(u^+)^{q(x)}\diff x.
\end{align*}
It is not difficult to see that $J,J_1$ are of $C^{1}(X,\mathbb{R})$ thanks to Lemmas~\ref{III.critical.imb} and \ref{IV.compact}. Clearly, a critical point of $J$ (resp. $J_1$) is a solution (resp. a nonnegative solution) of problem \eqref{Eq}. To prove our main results, we employ techniques of the calculus of variations to determine the critical points of $J_1$ or $J$. The following lemma, the so called $(S_+)$-property, is useful for showing the $\textup{(PS)}$ condition. Since the proof can be obtained via a standard argument (see e.g., \cite[Proof of Lemma 3.2]{Sim-Kim}), we omit it.
\begin{lemma} \label{IV.S+}
	If $u_{n}\rightharpoonup u$ in  $X$ and
	$$\limsup_{n\to  \infty } \int_{\mathbb{R}^N}\left[|\nabla u_n|^{p(x)-2}\nabla u_n\cdot(\nabla u_n-\nabla u)+V(x)|u_n|^{p(x)-2}u_n(u_n-u)\right]\diff x\leq 0,$$
	then  $u_{n}\to  u$ in $X$.
\end{lemma}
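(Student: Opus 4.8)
The plan is to establish the $(S_+)$-property for the operator associated with the left-hand side of \eqref{Eq}, that is, for $A:X\to X^\ast$ given by
$$\langle A(u),\varphi\rangle=\int_{\mathbb{R}^N}|\nabla u|^{p(x)-2}\nabla u\cdot\nabla\varphi\diff x+\int_{\mathbb{R}^N}V(x)|u|^{p(x)-2}u\varphi\diff x.$$
First I would record that $A$ is the derivative of the convex $C^1$-functional $u\mapsto\int_{\mathbb{R}^N}\frac{1}{p(x)}|\nabla u|^{p(x)}\diff x+\int_{\mathbb{R}^N}\frac{V(x)}{p(x)}|u|^{p(x)}\diff x$, hence $A$ is monotone; consequently, if $u_n\rightharpoonup u$ and $\limsup_n\langle A(u_n),u_n-u\rangle\le0$, then since $\langle A(u),u_n-u\rangle\to0$ by weak convergence one gets $\langle A(u_n)-A(u),u_n-u\rangle\to0$. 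So the hypothesis may be upgraded to the symmetric form $\lim_n\langle A(u_n)-A(u),u_n-u\rangle=0$, which is the convenient quantity because its integrand is pointwise nonnegative.

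The core step is the standard pointwise-inequality estimate: for vectors $\xi,\eta\in\mathbb{R}^N$ and exponent $p>1$ one has $(|\xi|^{p-2}\xi-|\eta|^{p-2}\eta)\cdot(\xi-\eta)\ge0$, with quantitative lower bounds $\ge c_p|\xi-\eta|^p$ when $p\ge2$ and $\ge c_p\frac{|\xi-\eta|^2}{(|\xi|+|\eta|)^{2-p}}$ when $1<p<2$; the same holds for the zeroth-order scalar term with weight $V(x)$. Splitting $\mathbb{R}^N$ into $\{p(x)\ge2\}$ and $\{1<p(x)<2\}$, these bounds show that the nonnegative integrand controls, respectively, $|\nabla u_n-\nabla u|^{p(x)}$ and (via Hölder with exponents $\frac{2}{p(x)}$ and $\frac{2}{2-p(x)}$, using boundedness of $\{u_n\}$ in the relevant Lebesgue space) a power of $|\nabla u_n-\nabla u|^{p(x)}$ integrated over that piece. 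Passing these through, one deduces $\int_{\mathbb{R}^N}|\nabla u_n-\nabla u|^{p(x)}\diff x\to0$ and $\int_{\mathbb{R}^N}V(x)|u_n-u|^{p(x)}\diff x\to0$, i.e. modular convergence; then Propositions~\ref{2.prop2} and \ref{2.prop3} convert this to norm convergence $\|u_n-u\|\to0$.

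The step I expect to be the main obstacle is the region $\{x:1<p(x)<2\}$, where the monotonicity is degenerate and one cannot directly bound $|\nabla u_n-\nabla u|^{p(x)}$ by the integrand: one must interpolate using the $L^{p^\ast(x)}$-bound on $\{u_n\}$ from Lemma~\ref{III.critical.imb} together with the boundedness of $\{|\nabla u_n|\}$ in $L^{p(x)}(\mathbb{R}^N)$, and handle the fact that the Hölder exponents $\frac{2}{2-p(x)}$ are themselves variable, so that Proposition~\ref{2.prop1} and Remark~\ref{2.measureable.spaces} must be applied with care; the variable-exponent bookkeeping (keeping track of whether norms are $<1$ or $>1$, as in Proposition~\ref{2.prop2}) is the place where a naive argument can go wrong. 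Once modular convergence on both pieces is in hand, the conclusion is routine. Since, as the excerpt notes, this is obtainable by a standard argument along the lines of \cite[Proof of Lemma 3.2]{Sim-Kim}, I would present it compactly, emphasizing the split and the interpolation and omitting the elementary vector inequalities.
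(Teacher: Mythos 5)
Your proposal is correct and follows exactly the standard route the paper itself invokes: the paper omits the proof of Lemma~\ref{IV.S+}, referring to the argument of \cite[Proof of Lemma 3.2]{Sim-Kim}, which is precisely the monotonicity-plus-pointwise-vector-inequality scheme (splitting $\mathbb{R}^N$ according to $p(x)\ge 2$ or $1<p(x)<2$, interpolating on the degenerate piece, and converting modular to norm convergence via Propositions~\ref{2.prop2} and \ref{2.prop3}) that you describe. The only cosmetic remark is that for the zeroth-order term on $\{1<p(x)<2\}$ the interpolation uses the boundedness of $\{u_n\}$ in $L^{p(x)}(V,\mathbb{R}^N)$ (i.e.\ in $X$) rather than the $L^{p^\ast(x)}$-bound, but this changes nothing.
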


\subsubsection{Proof of Theorem~\ref{IV.main1}}
To show the existence of solutions to the $p(\cdot)$-superlinear problem, we employ the Mountain Pass Theorem to determine the critical points of $J_1.$ The next lemma shows the geometry of $J_1$. We skip its proof because it is almost identical to \cite[Proof of Lemma 4.2]{Ky2} by employing Lemmas~\ref{III.critical.imb} and \ref{IV.compact}.

\begin{lemma} \label{IV.geometry1}
	Assume that $(\mathcal{P}),$ $(\mathcal{C})$,  and $(\mathcal{S})$ hold such that $p^+<\min\{r^-,q^-\}.$ For each given  $\lambda>0,\theta>0,$ there  exist $0<\delta<1$ and $\rho>0$ such that
	$J_1(u)\geq \rho \ \ \text{if}\  \ \|u\|=\delta.$
\end{lemma}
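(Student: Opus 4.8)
The plan is to show that $J_1$ has a strict local minimum at the origin with positive value on a small sphere, by estimating each of the four terms in $J_1(u)$ in terms of $\|u\|$ for $\|u\|$ small. Fix $\lambda>0$ and $\theta>0$. First I would restrict to $\|u\|=\delta$ with $0<\delta<1$ to be chosen, so that all relevant modulars are below $1$ and Propositions~\ref{2.prop2} and \ref{2.Sim-Kim} give powers of norms with the ``small'' exponents on one side and the ``large'' exponents on the other. For the principal part, using $p^+<N$ and the definition of the norm $\|\cdot\|$ together with Proposition~\ref{2.prop2}, one gets a lower bound of the form
\begin{equation*}
\int_{\mathbb{R}^N}\frac{1}{p(x)}|\nabla u|^{p(x)}\diff x+\int_{\mathbb{R}^N}\frac{V(x)}{p(x)}|u|^{p(x)}\diff x\geq \frac{1}{p^+}\,c_0\,\|u\|^{p^+}
\end{equation*}
for some constant $c_0>0$ depending only on $p$ (this uses the equivalence of $\|u\|$ with $\big||\nabla u|\big|_{L^{p(x)}}+|u|_{L^{p(x)}(V)}$ and the elementary inequality relating $(a+b)^{p^+}$ to $a^{p^+}+b^{p^+}$, valid since $\delta<1$).

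Next I would bound the two subtracted terms from above. For the $b$-term, since $X\hookrightarrow L^{q(x)}(b,\mathbb{R}^N)$ continuously by Lemma~\ref{III.critical.imb}, Proposition~\ref{2.prop2} applied to the modular of $u$ in $L^{q(x)}(b,\mathbb{R}^N)$ yields, for $\|u\|$ small enough that $|u|_{L^{q(x)}(b,\mathbb{R}^N)}<1$,
\begin{equation*}
\theta\int_{\mathbb{R}^N}\frac{b(x)}{q(x)}(u^+)^{q(x)}\diff x\leq \frac{\theta}{q^-}|u|_{L^{q(x)}(b,\mathbb{R}^N)}^{q^-}\leq \frac{\theta}{q^-}\,C_1^{q^-}\,\|u\|^{q^-}.
\end{equation*}
For the $a$-term, Lemma~\ref{IV.compact} gives the continuous imbedding $X\hookrightarrow L^{r(x)}(a,\mathbb{R}^N)$, and the same application of Proposition~\ref{2.prop2} gives an upper bound $\frac{\lambda}{r^-}C_2^{r^-}\|u\|^{r^-}$ for small $\|u\|$. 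Combining the three estimates, for $\|u\|=\delta<1$,
\begin{equation*}
J_1(u)\geq \frac{c_0}{p^+}\delta^{p^+}-\frac{\lambda C_2^{r^-}}{r^-}\delta^{r^-}-\frac{\theta C_1^{q^-}}{q^-}\delta^{q^-}=\delta^{p^+}\left(\frac{c_0}{p^+}-\frac{\lambda C_2^{r^-}}{r^-}\delta^{r^--p^+}-\frac{\theta C_1^{q^-}}{q^-}\delta^{q^--p^+}\right).
\end{equation*}
Since $p^+<\min\{r^-,q^-\}$, the exponents $r^--p^+$ and $q^--p^+$ are strictly positive, so the bracket tends to $c_0/p^+>0$ as $\delta\to 0^+$; hence one can choose $\delta\in(0,1)$ small so that the bracket is at least $c_0/(2p^+)$, and then set $\rho:=\frac{c_0}{2p^+}\delta^{p^+}>0$. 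This proves $J_1(u)\geq\rho$ whenever $\|u\|=\delta$.

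The only delicate point — and the step I would write most carefully — is the lower bound on the principal part, because the norm $\|u\|$ is the \emph{sum} $\big||\nabla u|\big|_{L^{p(x)}}+|u|_{L^{p(x)}(V)}$ while the modular splits additively; one must be careful that when $\|u\|=\delta<1$ each summand is also $<1$, apply Proposition~\ref{2.prop2}(iii) to each summand separately, and then use $a^{p^+}+b^{p^+}\geq 2^{1-p^+}(a+b)^{p^+}$ to recombine, which is where the constant $c_0=2^{1-p^+}$ enters. Everything else is a routine application of the modular–norm inequalities and the two imbedding lemmas, so no further obstacle is expected; as the authors note, this mirrors \cite[Proof of Lemma 4.2]{Ky2}.
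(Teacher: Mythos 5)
Your proposal is correct and follows essentially the same route the paper intends: it omits the proof precisely because it is the standard argument of \cite[Lemma 4.2]{Ky2} combined with Lemmas~\ref{III.critical.imb} and \ref{IV.compact}, namely a lower bound $c_0\|u\|^{p^+}/p^+$ for the principal part via Proposition~\ref{2.prop2} and upper bounds $C\|u\|^{r^-}$, $C\|u\|^{q^-}$ for the two nonlinear terms, with $p^+<\min\{r^-,q^-\}$ closing the argument. The only cosmetic caveat is that your final choice of $\delta$ should also be taken below $\min\{1,1/C_1,1/C_2\}$ so that the norms $|u|_{L^{q(x)}(b,\mathbb{R}^N)}$ and $|u|_{L^{r(x)}(a,\mathbb{R}^N)}$ are indeed less than $1$ when the small-exponent modular inequalities are invoked.
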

	
The information about the compactness condition for $J_1$ is given in the next lemma.
\begin{lemma} \label{IV.PS1}
	Assume that $(\mathcal{P}),$ $(\mathcal{C}),$ $(\mathcal{E}_\infty)$,  and $(\mathcal{S})$ hold such that $p^+<\min\{r^-,q^-\}.$ For each given $\lambda>0,\theta>0,$ $J_1$ satisfies the $(PS)_c$ condition for all
	\begin{equation}\label{4.condition1.c}
	c<(\frac{1}{p^+}-\frac{1}{q^-})\frac{1}{2^{(qh)^+}}\min\{S^{(qh)^+},S^{(qh)^-}\}\min\{\theta^{-h^+},\theta^{-h^-}\},
	\end{equation}
	where $h(x):=\frac{p(x)}{q(x)-p(x)}$ for all $x\in\mathbb{R}^N,$  and $S$ is defined as in \eqref{3.best.const}.
\end{lemma}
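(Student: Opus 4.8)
The plan is to prove the Palais–Smale condition by first showing that any $(PS)_c$ sequence $\{u_n\}$ is bounded in $X$, then using the two concentration-compactness principles (Theorems~\ref{III.ccp} and \ref{III.ccp.infinity}) to rule out concentration of mass both at finite points $x_i\in\mathcal{A}$ and at infinity, provided the level $c$ lies below the stated threshold. Once no mass is lost, the weak limit $u$ will be a strong limit via the $(S_+)$-property (Lemma~\ref{IV.S+}).

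First I would establish boundedness: from $J_1(u_n)\to c$ and $J_1'(u_n)\to 0$ in $X^*$, form the combination $J_1(u_n)-\frac{1}{q^-}\langle J_1'(u_n),u_n\rangle$. Because $p^+<\min\{r^-,q^-\}$, the leading modular terms $\int(\frac{1}{p(x)}-\frac{1}{q^-})[|\nabla u_n|^{p(x)}+V|u_n|^{p(x)}]\diff x$ are coercive in the modular of $\|\cdot\|$, while the concave term $\lambda\int a(x)(u_n^+)^{r(x)}$ is controlled using Lemma~\ref{IV.compact} (the compact imbedding $X\hookrightarrow\hookrightarrow L^{r(x)}(a,\mathbb{R}^N)$) and Young's inequality, with exponent gap $q^->r^-\ge r^+$ handled in the usual way; the critical term is absorbed exactly by the choice of multiplier $\frac{1}{q^-}$. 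Proposition~\ref{2.prop2} then converts modular bounds into norm bounds, giving $\sup_n\|u_n\|<\infty$.

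Next, passing to a subsequence, $u_n\rightharpoonup u$ in $X$, $u_n\to u$ in $L^{r(x)}(a,\mathbb{R}^N)$ and a.e., and $|\nabla u_n|^{p(x)}+V|u_n|^{p(x)}\overset{\ast}{\rightharpoonup}\mu$, $b|u_n|^{q(x)}\overset{\ast}{\rightharpoonup}\nu$ in $\mathcal{M}(\mathbb{R}^N)$. Theorem~\ref{III.ccp} gives the decompositions \eqref{3.ccp.form.nu}–\eqref{3.ccp.nu_mu} with atoms at points $x_i\in\mathcal{A}$, and Theorem~\ref{III.ccp.infinity} gives the mass-at-infinity relation \eqref{3.ccp.infinity.nu_mu}. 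The heart of the argument is to show $I=\emptyset$ and $\nu_\infty=0$. Testing $J_1'(u_n)$ against $\phi_{i,\epsilon}u_n$ (the cutoff used in the proof of Theorem~\ref{III.ccp}) and letting $n\to\infty$ then $\epsilon\to0^+$, the subcritical and concave terms vanish in the limit, yielding $\mu_i=\theta\nu_i$ at each atom; combined with $S\nu_i^{1/q(x_i)}\le\mu_i^{1/p(x_i)}$ this forces, if $\nu_i>0$, a lower bound $\nu_i\ge\min\{(\theta^{-1}S^{q(x_i)})^{h(x_i)\cdot\text{something}}\}$ — more precisely $\nu_i\ge S^{\frac{q(x_i)p(x_i)}{q(x_i)-p(x_i)}}\theta^{-h(x_i)}$, whence $\mu_i=\theta\nu_i\ge S^{(qh)}\theta^{-h}$ evaluated appropriately and bounded below by $\frac{1}{2^{(qh)^+}}\min\{S^{(qh)^+},S^{(qh)^-}\}\min\{\theta^{-h^+},\theta^{-h^-}\}$. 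The analogous computation at infinity using $\phi_R u_n$ and \eqref{3.ccp.infinity.nu_mu} (which carries the factor $\frac12$) produces the same bound with the $2^{(qh)^+}$ in the denominator. But then $c=\lim J_1(u_n)=\lim[J_1(u_n)-\frac{1}{q^-}\langle J_1'(u_n),u_n\rangle]\ge(\frac{1}{p^+}-\frac{1}{q^-})(\mu_i\text{ or }\mu_\infty)$, contradicting \eqref{4.condition1.c}. Hence all $\nu_i=0$, $I=\emptyset$, and $\nu_\infty=0$, so also $\mu_\infty=0$.

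Finally, with no concentration, $\int_{\mathbb{R}^N}b(x)|u_n|^{q(x)}\diff x\to\int_{\mathbb{R}^N}b(x)|u|^{q(x)}\diff x$ by \eqref{3.ccp.infinity.nu} with $\nu_\infty=0$ together with \eqref{3.ccp.form.nu} with $I=\emptyset$, and similarly $\int a(x)(u_n^+)^{r(x)}\to\int a(x)(u^+)^{r(x)}$ by Lemma~\ref{IV.compact}. Writing out $\langle J_1'(u_n),u_n-u\rangle\to0$ and using these convergences to discard the lower-order and critical contributions, we are left with $\limsup_n\int[|\nabla u_n|^{p(x)-2}\nabla u_n\cdot(\nabla u_n-\nabla u)+V|u_n|^{p(x)-2}u_n(u_n-u)]\diff x\le0$, so Lemma~\ref{IV.S+} yields $u_n\to u$ in $X$. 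I expect the main obstacle to be the bookkeeping of variable exponents in the concentration estimates: because $p,q$ are functions, the relation $S\nu_i^{1/q(x_i)}\le\mu_i^{1/p(x_i)}$ must be manipulated pointwise at each $x_i$ and then uniformized over $\mathcal{A}$ and over the possible values of $h$, which is exactly where the somewhat awkward constant $\frac{1}{2^{(qh)^+}}\min\{S^{(qh)^+},S^{(qh)^-}\}\min\{\theta^{-h^+},\theta^{-h^-}\}$ in \eqref{4.condition1.c} comes from; care is also needed at infinity because the CCP there only gives the inequality with the factor $\frac12$, which is why the exponent $(qh)^+$ (rather than a pointwise exponent) appears in $2^{(qh)^+}$.
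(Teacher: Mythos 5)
Your overall strategy is exactly the paper's: boundedness of the $(PS)_c$ sequence, the two concentration--compactness principles, testing $J_1'(u_n)$ against $\phi_{i,\epsilon}u_n$ and $\phi_R u_n$ to get $\mu_i=\theta\nu_i$ and $\mu_\infty=\theta\nu_\infty$, the lower bounds forcing a contradiction with \eqref{4.condition1.c} (with the factor $\tfrac12$ at infinity producing the $2^{(qh)^+}$), and finally the $(S_+)$-property. The one step that fails as written is your choice of the single multiplier $\tfrac{1}{q^-}$ in the combination $J_1(u_n)-\tfrac{1}{q^-}\langle J_1'(u_n),u_n\rangle$. The hypotheses only give $p^+<\min\{r^-,q^-\}$; no order between $r(\cdot)$ and $q^-$ is assumed, so the coefficient $\tfrac{1}{q^-}-\tfrac{1}{r(x)}$ of the $a$-term can be negative on a set where $a\,(u_n^+)^{r(x)}$ is positive. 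For boundedness this negative contribution grows like $\|u_n\|^{r^+}$ with $r^+\ge r^->p^+\ge p^-$, so it cannot be absorbed by the coercive part (which only grows like $\|u_n\|^{p^-}$) via Young's inequality as you claim; and in the level estimate it leaves an uncontrolled $\lambda$-dependent deficit $-\lambda\int(\tfrac{1}{r(x)}-\tfrac{1}{q^-})_+\,a\,v^{r(x)}\diff x$ on the right-hand side, which destroys the contradiction with \eqref{4.condition1.c} since that threshold contains no $\lambda$.

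The repair is the paper's own choice of multipliers: for boundedness use $\tfrac{1}{\kappa}$ with $\kappa=\min\{r^-,q^-\}>p^+$, so that both nonlinear terms have nonnegative coefficients and $c+1+\|u_n\|\ge(\tfrac{1}{p^+}-\tfrac{1}{\kappa})\int[|\nabla u_n|^{p(x)}+V|u_n|^{p(x)}]\diff x$; for the level estimate use $\tfrac{1}{p^+}$, so that $\tfrac{1}{p^+}-\tfrac{1}{r(x)}\ge 0$ (because $r^->p^+$) and $\tfrac{1}{p^+}-\tfrac{1}{q(x)}\ge\tfrac{1}{p^+}-\tfrac{1}{q^-}$, giving cleanly $c\ge(\tfrac{1}{p^+}-\tfrac{1}{q^-})\,\theta[\nu(\mathbb{R}^N)+\nu_\infty]\ge(\tfrac{1}{p^+}-\tfrac{1}{q^-})\,\theta\nu_i$. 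With these substitutions the rest of your argument goes through and coincides with the paper's proof; note also that one should pass to $v_n=u_n^+$ (after checking $u_n^-\to0$) before testing, since $J_1$ only sees $u^+$ in its nonlinear terms.
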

\begin{proof}
	Let $c$ satisfy \eqref{4.condition1.c} and let $\{u_n\}$ be a $\textup{(PS)}_c$ sequence for $J_1$. As in \cite[Proof of Lemma 4.3]{Ky2}, we obtain that $\{u_n\}$ is bounded in $X$ and $\{u_n^+\}$ is also a $\textup{(PS)}_c$ sequence for $J_1$. Set $v_n:=u_n^+\geq 0$ ($n=1,2,\cdots$). Then, from Theorem~\ref{III.ccp}, Theorem~\ref{III.ccp.infinity} and Lemma~\ref{IV.compact}, up to a subsequence, we have
	\begin{gather}
	v_n(x) \to v(x)\geq 0  \quad \text{for a.e.} \ \ x\in\mathbb{R}^N,\label{4.a.e1}\\
	v_n \rightharpoonup v  \quad \text{in} \  X,\label{4.weak1}\\
	|\nabla v_n|^{p(x)}+ V(x)|v_n|^{p(x)}\overset{\ast }{\rightharpoonup }\mu \geq |\nabla v|^{p(x)} +V(x)|v|^{p(x)}+ \sum_{i\in I} \mu_i \delta_{x_i} \ \text{in}\  \mathcal{M}(\mathbb{R}^N),\label{4.mu1}\\
	b|v_n|^{q(x)}\overset{\ast }{\rightharpoonup }\nu=b|v|^{q(x)} + \sum_{i\in I}\nu_i\delta_{x_i} \ \text{in}\ \mathcal{M}(\mathbb{R}^N),\label{4.nu1}\\
	S \nu_i^{1/q(x_i)} \leq \mu_i^{1/p(x_i)}, \quad \forall i\in I,\label{4.mu-nu1}
		\end{gather}
	and
		\begin{gather}
	\underset{n\to\infty}{\lim\sup}\int_{\mathbb{R}^N}\left[|\nabla v_n|^{p(x)}+V(x)|v_n|^{p(x)}\right]\diff x=\mu(\mathbb{R}^N)+\mu_\infty,\label{4.mu_infinity1}\\
	\underset{n\to\infty}{\lim\sup}\int_{\mathbb{R}^N}b(x)|v_n|^{q(x)}\diff x=\nu(\mathbb{R}^N)+\nu_\infty,\label{4.nu_infinity1}\\
	\frac{1}{2}S \nu_\infty^{1/q_\infty} \leq \mu_\infty^{1/p_\infty}.\label{4.mu_inf-nu_inf}
	\end{gather}

We claim that $I=\emptyset$ and $\nu_\infty=0.$ To this aim, we first estimate
	\begin{align}\label{4.PS1.est.c}
	c&=\lim_{n\to\infty}[J_1(v_n)-\frac{1}{p^+}\langle J'_1(v_n) ,v_n\rangle]\notag\\
	&\geq \left(\frac{1}{p^+}-\frac{1}{q^-}\right)\theta\underset{n\to\infty}{\lim\sup}\int_{\mathbb{R}^N}b(x)v_n^{q(x)}\diff x=\left(\frac{1}{p^+}-\frac{1}{q^-}\right)\theta\left[\nu(\mathbb{R}^N)+\nu_\infty\right].
	\end{align}
	We now prove that $I=\emptyset.$ Suppose by contradiction that there exists $i\in I$. Let $\epsilon>0$ define $\phi_{i,\epsilon}$ as in the proof of Theorem~\ref{III.ccp}. We have
	\begin{align*}
	\int_{\mathbb{R}^N}|\nabla v_n|^{p(x)-2}&v_n\nabla v_n\cdot\nabla \phi_{i,\epsilon}\diff x=\langle J'_1(v_n) ,\phi_{i,\epsilon}v_n \rangle+\lambda\int_{\mathbb{R}^N}\phi_{i,\epsilon}a(x)v_n^{r(x)}\diff x\\
	&+\theta\int_{\mathbb{R}^N}\phi_{i,\epsilon}b(x)v_n^{q(x)}\diff x-\int_{\mathbb{R}^N}\phi_{i,\epsilon}[|\nabla v_n|^{p(x)}+V(x)|v_n|^{p(x)}]\diff x.
	\end{align*}
Arguing as in \cite[Proof of Lemma 4.3]{Ky2} and taking into account the boundedness of $\{v_n\}$ in $X$ as well as the fact that $v_n\to v$ in both $L^{r(x)}(a,\mathbb{R}^N)$ and $L^{p(x)}(B_\epsilon(x_i))$ (see Lemmas~\ref{III.critical.imb.loc} and \ref{IV.compact}), we deduce from the last equality (after taking the limit as $n\to\infty$ and then $\epsilon\to 0^+$) that
	$\mu_i=\theta\nu_i.$ Thus, \eqref{4.mu-nu1} gives
	$$\mu_i\geq S^{\frac{q(x_i)p(x_i)}{q(x_i)-p(x_i)}}\theta^{-\frac{p(x_i)}{q(x_i)-p(x_i)}}.$$
	Hence,
	\begin{equation*}
	\theta\nu_i=\mu_i\geq \min\{S^{(qh)^+},S^{(qh)^-}\}\min\{\theta^{-h^+},\theta^{-h^-}\}.
	\end{equation*}
	From this and \eqref{4.PS1.est.c}, we get
	$$c\geq \left(\frac{1}{p^+}-\frac{1}{q^-}\right)\theta\nu_i\geq  \left(\frac{1}{p^+}-\frac{1}{q^-}\right)\min\{S^{(qh)^+},S^{(qh)^-}\}\min\{\theta^{-h^+},\theta^{-h^-}\}.$$
	This contradicts \eqref{4.condition1.c}, and hence; $I=\emptyset.$ We next show that $\nu_\infty=0.$ Let $\phi_R$ be the same as in the proof of Theorem~\ref{III.ccp.infinity}. By using $\phi_R$ in place of $\phi_R^{p(x)}$ and $\phi_R^{q(x)}$ as well as  using $v_n$ in place of $u_n$ in \eqref{3.ccp.infinity.decompose} and \eqref{3.ccp.infinity.decompose2}, we easily obtain
	\begin{equation}\label{3.ccp.infinity.mu_infinity2}
	\mu_\infty=\lim_{R\to\infty}\underset{n\to\infty}{\lim\sup}\int_{\mathbb{R}^N}\left[|\nabla v_n|^{p(x)}+V(x)|v_n|^{p(x)}\right]\phi_R\diff x,
	\end{equation}
	and
	\begin{equation}\label{3.ccp.infinity.nu_infinity2}
	\nu_\infty=\lim_{R\to\infty}\underset{n\to\infty}{\lim\sup}\int_{\mathbb{R}^N}b(x)|v_n|^{q(x)}\phi_R\diff x.
	\end{equation}
	 Since $\{\phi_Rv_n\}$ is bounded in $X$, we have
	\begin{align}\label{4.PS1.nu_infinity}
	o_n(1)&=\langle J'_1(v_n) ,\phi_Rv_n \rangle\notag\\
	&=\int_{\mathbb{R}^N}\left[|\nabla v_n|^{p(x)}+V(x)|v_n|^{p(x)}\right]\phi_R\diff x+\int_{\mathbb{R}^N}|\nabla v_n|^{p(x)-2}v_n\nabla v_n\cdot\nabla \phi_R\diff x\notag\\
	&\quad\quad-\theta\int_{\mathbb{R}^N}b(x)v_n^{q(x)}\phi_R\diff x-\lambda\int_{\mathbb{R}^N}a(x)v_n^{r(x)}\phi_R\diff x.
	\end{align}
	Noticing that $v_n\to v$ in $L^{r(x)}(a,\mathbb{R}^N),$ we have
	\begin{equation}\label{4.PS1.ets.a}
	\lim_{n\to \infty }\int_{\mathbb{R}^N}a(x)v_n^{r(x)}\phi_R\diff x=\int_{\mathbb{R}^N}a(x)v^{r(x)}\phi_R\diff x.
	\end{equation}
	Next, we shall prove that
	\begin{equation}\label{4.mu-nu_infinity}
	\lim_{R\to\infty}\underset{n\to\infty}{\lim\sup}\bigg|\int_{\mathbb{R}^N}|\nabla v_n|^{p(x)-2}v_n\nabla v_n\cdot\nabla \phi_R\diff x\bigg|=0.
	\end{equation}
	Indeed, using Young's inequality, for each $\delta>0$, we have
	$$\bigg|\int_{\mathbb{R}^N}|\nabla v_n|^{p(x)-2}v_n\nabla v_n\cdot\nabla \phi_Rdx\bigg|\leq \delta \int_{\mathbb{R}^N}|\nabla v_n|^{p(x)}\diff x+C(\delta)\int_{\mathbb{R}^N}|v_n\nabla \phi_R|^{p(x)}\diff x.$$
	Taking into account the boundedness of $\{v_n\}$ in $X$ and $v_n\to v$ in $L^{p(x)}(B_{2R}\setminus \overline{B_{R}})$ (see Lemma~\ref{III.critical.imb.loc}), we obtain from the last inequality that
	\begin{equation}\label{4.mu-nu.infinity.est1}
\underset{n\to\infty}{\lim\sup}\bigg|\int_{\mathbb{R}^N}|\nabla v_n|^{p(x)-2}v_n\nabla v_n\cdot\nabla \phi_R\diff x\bigg|
\leq C\delta +C(\delta)\int_{\mathbb{R}^N}|v\nabla \phi_R|^{p(x)}\diff x,
	\end{equation}
where $C$ is a positive constant independent of $\delta.$ Using a similar argument to that obtained \eqref{3.ccp.infinity.est6}, we have
	$$\int_{\mathbb{R}^N}|v\nabla \phi_R|^{p(x)}dx\to 0\ \ \text{as}\ \ R \to \infty.$$
From this and \eqref{4.mu-nu.infinity.est1}, we infer
	$$\lim_{R\to\infty}\underset{n\to\infty}{\lim\sup}\bigg|\int_{\mathbb{R}^N}|\nabla v_n|^{p(x)-2}v_n\nabla v_n\cdot\nabla \phi_R\diff x\bigg|\leq C\delta .$$
	Since $\delta>0$ was arbitrarily chosen, we hence obtain \eqref{4.mu-nu_infinity}. 
Combining \eqref{4.PS1.nu_infinity} with \eqref{4.PS1.ets.a} and \eqref{4.mu-nu_infinity} we deduce
	\begin{align*}
	\underset{n\to\infty}{\lim\sup}\int_{\mathbb{R}^N}&\left[|\nabla v_n|^{p(x)}+V(x)|v_n|^{p(x)}\right]\phi_R\diff x\leq \theta\underset{n\to\infty}{\lim\sup}\int_{\mathbb{R}^N}b(x)v_n^{q(x)}\phi_R\diff x\notag\\
	&+ \lambda \int_{\mathbb{R}^N}a(x)v^{r(x)}\phi_R\diff x+\underset{n\to\infty}{\lim\sup}\bigg|\int_{\mathbb{R}^N}|\nabla v_n|^{p(x)-2}v_n\nabla v_n\cdot\nabla \phi_R\diff x\bigg|.
	\end{align*}
Letting $R\to\infty$ in the last inequality and noting \eqref{3.ccp.infinity.mu_infinity2} and \eqref{3.ccp.infinity.nu_infinity2}, we get
	$$\mu_\infty\leq\theta\nu_\infty.$$
Similarly we obtain from \eqref{4.PS1.nu_infinity}-\eqref{4.mu-nu_infinity} that
	\begin{align*}
\theta\underset{n\to\infty}{\lim\sup}\int_{\mathbb{R}^N}&b(x)v_n^{q(x)}\phi_R\diff x\leq \underset{n\to\infty}{\lim\sup}\int_{\mathbb{R}^N}\left[|\nabla v_n|^{p(x)}+V(x)|v_n|^{p(x)}\right]\phi_R\diff x\notag\\
&+ \lambda \int_{\mathbb{R}^N}a(x)v^{r(x)}\phi_R\diff x+\underset{n\to\infty}{\lim\sup}\bigg|\int_{\mathbb{R}^N}|\nabla v_n|^{p(x)-2}v_n\nabla v_n\cdot\nabla \phi_R\diff x\bigg|.
\end{align*}	
From this, \eqref{3.ccp.infinity.mu_infinity2} and \eqref{3.ccp.infinity.nu_infinity2} we get
$$\theta\nu_\infty\leq \mu_\infty.$$	
Then, we obtain 	$\mu_\infty=\theta\nu_\infty.$ Thus, if $\nu_\infty>0$, the relation \eqref{4.mu_inf-nu_inf} gives
\begin{equation}\label{4.PS1.est.mu_infinity}
\theta\nu_\infty=\mu_\infty\geq (\frac{1}{2}S)^{\frac{q_\infty p_\infty}{q_\infty -p_\infty}}\theta^{-\frac{p_\infty}{q_\infty -p_\infty}}.
\end{equation}
Note that $\frac{q_\infty p_\infty}{q_\infty -p_\infty}=\lim_{|x|\to\infty} q(x)h(x)$ and $\frac{p_\infty}{q_\infty -p_\infty}=\lim_{|x|\to\infty} h(x)$. So we have
$$(qh)^-\leq \frac{q_\infty p_\infty}{q_\infty -p_\infty}\leq (qh)^+,\quad h^-\leq \frac{p_\infty}{q_\infty -p_\infty}\leq h^+.$$
Using this, we infer from \eqref{4.PS1.est.mu_infinity} that
$$\theta\nu_\infty\geq \min\big\{(\frac{1}{2}S)^{(qh)^+},(\frac{1}{2}S)^{(qh)^-}\big\}\min\{\theta^{-h^+},\theta^{-h^-}\}.$$	
From this and \eqref{4.PS1.est.c}, we obtain	
$$c\geq\left(\frac{1}{p^+}-\frac{1}{q^-}\right)\theta\nu_\infty\geq \left(\frac{1}{p^+}-\frac{1}{q^-}\right)\frac{1}{2^{(qh)^+}}\min\{S^{(qh)^+},S^{(qh)^-}\}\min\{\theta^{-h^+},\theta^{-h^-}\}.$$		
This contradicts \eqref{4.condition1.c}, and hence; $\nu_\infty=0.$	

By this and the fact that $I=\emptyset$, we deduce from \eqref{4.nu1} and \eqref{4.nu_infinity1} that
$$\underset{n\to\infty}{\lim\sup}\int_{\mathbb{R}^N}b(x)v_n^{q(x)}\diff x=\int_{\mathbb{R}^N}b(x)v^{q(x)}\diff x.$$
By \eqref{4.a.e1} we obtain
$$\int_{\mathbb{R}^N}b(x)v^{q(x)}\diff x\leq \underset{n\to\infty}{\lim\inf}\int_{\mathbb{R}^N}b(x)v_n^{q(x)}\diff x$$
in view of Fatou's lemma. Thus,
$$\lim_{n\to\infty}\int_{\mathbb{R}^N}b(x)v_n^{q(x)}\diff x=\int_{\mathbb{R}^N}b(x)v^{q(x)}\diff x.$$
From this, \eqref{4.a.e1} and Lemma~\ref{III.brezis-lieb}, we obtain
	$$\int_{\mathbb{R}^N}b(x)|v_n-v|^{q(x)}\diff x\to 0,\ \ \text{i.e.,}\ \ v_n\to v\ \ \text{in}\ \ L^{q(x)}(b,\mathbb{R}^N).$$
This fact and Proposition~\ref{2.prop1} yield $\int_{\mathbb{R}^N}b(x)v_n^{q(x)-1}(v_n-v)\diff x\to 0$. On the other hand, $\int_{\mathbb{R}^N}a(x)v_n^{r(x)-1}(v_n-v)\diff x\to 0$ due to Lemma~\ref{IV.compact} and \eqref{4.weak1}. We therefore deduce
	\begin{align*}
	\int_{\mathbb{R}^N}&|\nabla v_n|^{p(x)-2}\nabla v_n\cdot\nabla(v_n-v)\diff x+\int_{\mathbb{R}^N}V(x)|v_n|^{p(x)-2}v_n(v_n-v)\diff x\\
	&=\langle J'_1(v_n) ,v_n-v\rangle+\lambda\int_{\mathbb{R}^N}a(x)v_n^{r(x)-1}(v_n-v)\diff x +\theta\int_{\mathbb{R}^N}b(x)v_n^{q(x)-1}(v_n-v)\diff x\\
	&\quad\to\quad 0.
	\end{align*}
Then, by Lemma~\ref{IV.S+}, we obtain $v_n\to v$ in $X$, and hence $u_n=v_n-u_n^-\to v$ in $X$. This completes the proof.
\end{proof}
We are now in a position to give a proof of Theorem~\ref{IV.main1}.
The proof is similar to that of \cite[Theorem 4.1]{Ky2} by invoking Lemmas~\ref{IV.S+}-\ref{IV.PS1} above and we omit it.

\subsubsection{Proofs of Theorems \ref{IV.main2} and \ref{IV.main3}}

In the rest of this section, we provide proofs of Theorems \ref{IV.main2} and \ref{IV.main3} using the Ekeland variational principle and genus theory. To do this, we need several auxiliary results. The next two lemmas provide some geometric results for the energy functional of the concave-convex type problem, and their proofs can be found in \cite[Proofs of Lemmas 3.5 and 3.6]{Ky1}.

\begin{lemma} \label{IV.geometry2.1}
	Assume that $(\mathcal{P}),$ $(\mathcal{C}),$ and $(\mathcal{S})$ hold.
	\begin{itemize}
		\item[(i)] If $p^+ < q^-$, then for each given $\theta>0$, there exists $\lambda_1=\lambda_1(\theta)>0$ such that, for any $\lambda \in(0,\lambda_1)$, there exist $\delta, \rho>0$ such that $J_1(u)\geq \rho$ if $\|u\|=\delta$.
		\item[(ii)] If $r^+<p^-$, then for each given $\lambda>0$, there exists $\theta_1=\theta_1(\lambda)>0$ such that, for any $\theta\in(0,\theta_1)$, there exist $\overline{\delta}, \overline{\rho}>0$ such that $J_1(u)\geq \overline{\rho}$ if $\|u\|=\overline{\delta}$.
	\end{itemize}
\end{lemma}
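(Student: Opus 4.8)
The plan is to bound $J_1$ from below on a sphere by an explicit function of the radius, and then to choose the radius and the parameter threshold in the correct order. The starting estimate is elementary: using $p(x)\le p^+$ and $r(x),q(x)\ge r^-,q^-$ in the definition of $J_1$, the imbedding $X\hookrightarrow L^{q(x)}(b,\mathbb{R}^N)$ with constant $1/S$ ($S$ as in \eqref{3.best.const}), the compact imbedding $X\hookrightarrow\hookrightarrow L^{r(x)}(a,\mathbb{R}^N)$ of Lemma~\ref{IV.compact} with some constant $C_a$, and the modular--norm relations of Propositions~\ref{2.prop2} and \ref{2.Sim-Kim}, one obtains constants $A,B_\lambda,D_\theta,\widetilde A,\widetilde B_\lambda,\widetilde D_\theta>0$, with $B_\lambda,\widetilde B_\lambda$ (resp. $D_\theta,\widetilde D_\theta$) depending linearly on $\lambda$ (resp. $\theta$), such that
\[
J_1(u)\ \ge\ A\|u\|^{p^+}-B_\lambda\|u\|^{r^-}-D_\theta\|u\|^{q^-}=:g(\|u\|)\qquad\text{for }\|u\|\le\delta_0,
\]
for some fixed $\delta_0\in(0,1)$, and
\[
J_1(u)\ \ge\ \widetilde A\|u\|^{p^-}-\widetilde B_\lambda\|u\|^{r^+}-\widetilde D_\theta\|u\|^{q^+}=:\widetilde g(\|u\|)\qquad\text{for }\|u\|\ge R_0,
\]
for some fixed $R_0>1$. (For $\|u\|$ small, both $\big||\nabla u|\big|_{L^{p(x)}(\mathbb{R}^N)}$ and $|u|_{L^{p(x)}(V,\mathbb{R}^N)}$ are $\le1$, so by Proposition~\ref{2.prop2} the modular $\int_{\mathbb{R}^N}[|\nabla u|^{p(x)}+V|u|^{p(x)}]\diff x$ dominates $\|u\|^{p^+}$ up to the convexity constant $2^{1-p^+}$, giving the first display; the second is symmetric, since for $\|u\|$ large the larger of the two norm pieces is $\ge\|u\|/2\ge1$.)

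For part (i) we are given $\theta>0$ and $p^+<q^-$. If $r^-\ge p^+$, the good term $A\|u\|^{p^+}$ already dominates $g$ near the origin and any sufficiently small fixed radius works, so assume $r^-<p^+<q^-$. First I would fix $\delta=\delta(\theta)\in(0,\delta_0)$ small enough that $D_\theta\,\delta^{q^--p^+}\le\frac14 A$ — possible since $q^--p^+>0$ — so that $g(\delta)\ge\frac34 A\delta^{p^+}-B_\lambda\delta^{r^-}$. With $\delta$ now frozen, I would choose $\lambda_1=\lambda_1(\theta)>0$ so small that $B_\lambda\delta^{r^-}\le\frac12 A\delta^{p^+}$ for every $\lambda<\lambda_1$ (possible because $\delta^{p^+-r^-}$ is a fixed positive number). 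Then $J_1(u)\ge g(\delta)\ge\frac14 A\delta^{p^+}=:\rho>0$ on $\|u\|=\delta$. The essential point is the order: $\delta$ is selected first, depending on $\theta$, and only then is the admissible range of $\lambda$ fixed in terms of $\delta$ — which is exactly why $\lambda_1$ must depend on $\theta$.

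For part (ii) we are given $\lambda>0$ and $r^+<p^-$, hence $r^-\le r^+<p^-\le p^+$; the concave term now dominates the gradient term for small $\|u\|$, so the working sphere must have large radius, where the bound $J_1\ge\widetilde g$ applies. For the given $\lambda$ I would fix $\overline{\delta}=\overline{\delta}(\lambda)\ge R_0$ large enough that $\widetilde A\,\overline{\delta}^{\,p^--r^+}\ge2\widetilde B_\lambda$ — possible since $p^--r^+>0$ — so that $\widetilde g(\overline{\delta})\ge\frac12\widetilde A\overline{\delta}^{\,p^-}-\widetilde D_\theta\overline{\delta}^{\,q^+}$. With $\overline{\delta}$ frozen, I would then choose $\theta_1=\theta_1(\lambda)>0$ so small that $\widetilde D_\theta\overline{\delta}^{\,q^+}\le\frac14\widetilde A\overline{\delta}^{\,p^-}$ for every $\theta<\theta_1$ (again $\overline{\delta}^{\,p^--q^+}$ is a fixed positive number). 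Then $J_1(u)\ge\frac14\widetilde A\overline{\delta}^{\,p^-}=:\overline{\rho}>0$ on $\|u\|=\overline{\delta}$. Here the order is again crucial: $\overline{\delta}$ depends on $\lambda$, and then $\theta_1$ on $\overline{\delta}$.

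The main obstacle is bookkeeping rather than conceptual: one must keep careful track of the modular--norm dichotomy ($\|u\|\lessgtr1$), so that the correct exponents $p^{\pm},r^{\pm},q^{\pm}$ enter the two lower bounds, and one must respect the order in which the radius and the parameter threshold are chosen. Inverting that order would (incorrectly) suggest a range of $\lambda$ (resp. $\theta$) independent of the other parameter; in fact, for part (ii) the presence of the concave term with $r^+<p^-$ forces the admissible sphere to have large radius, and hence $\theta_1$ genuinely depends on $\lambda$.
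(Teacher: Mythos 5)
Your argument is correct and is essentially the same small-sphere/large-sphere estimate, with the same order of choices (radius first, then the parameter threshold), that the paper relies on by citing \cite[Lemmas 3.5 and 3.6]{Ky1}. One cosmetic slip: your aside that for $r^-\ge p^+$ ``any sufficiently small fixed radius works'' fails in the borderline case $r^-=p^+$ with $\lambda$ large, but this is harmless since your main choice of $\delta$ and then $\lambda_1$ covers that case verbatim.
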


\begin{lemma} \label{IV.geometry2.2}
	Assume that $(\mathcal{P}),$ $(\mathcal{C}),$ and $(\mathcal{S})$ hold. If $\{x\in \mathbb{R}^N: r(x)<p(x)\}\ne \emptyset$, then for any $\lambda >0,\theta>0$, there exists $\phi \in X, \phi \geq 0$ such that $J_1(t\phi)<0$ for all $t>0$ small.
\end{lemma}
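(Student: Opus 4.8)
The plan is to exploit the gap between the concave exponent $r$ and the exponent $p$ on the set where $r(x)<p(x)$, producing a test direction along which the concave term dominates for small~$t$. First I would use the hypothesis $\{x\in\mathbb{R}^N: r(x)<p(x)\}\neq\emptyset$ together with the continuity of $r-p$ to fix a point $x_0$ and a small ball $B:=B_\epsilon(x_0)$ on which $r(x)<p(x)-\sigma$ for some $\sigma>0$; since $a\in P_+(\mathbb{R}^N)$ and $V$ is locally bounded above and below away from zero on bounded sets by $(\mathcal{P})$, the ball $B$ carries positive $a$-mass. Next I would choose a fixed nonnegative cut-off $\phi\in C_c^\infty(\mathbb{R}^N)\subset X$ with $\phi\geq 0$, $\phi\not\equiv 0$, $\operatorname{supp}\phi\subset B$, so that all four integrals defining $J_1(t\phi)$ are over the bounded set $B$ and are finite by Lemmas~\ref{III.critical.imb} and~\ref{IV.compact}.

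The key computation is then an upper bound on $J_1(t\phi)$ for $t\in(0,1)$. Splitting the $p(x)$-integrals by whether $t|\nabla\phi(x)|\le 1$ or $>1$ (and similarly $t|\phi|$), and using $0<t<1$, one gets
\[
\int_{\mathbb{R}^N}\frac{1}{p(x)}|\nabla(t\phi)|^{p(x)}\diff x+\int_{\mathbb{R}^N}\frac{V(x)}{p(x)}|t\phi|^{p(x)}\diff x\le C_1\,t^{p^-},
\]
where $C_1$ depends only on $\phi$, $p$, and $\|V\|_{L^\infty(B)}$. For the concave term, on $B$ we have $r(x)\le p^--\sigma'$ for a suitable $\sigma'>0$ (shrinking $B$ if necessary so that $r(x)<p^-$ there, which is possible since $r(x_0)<p(x_0)$ forces, after shrinking, $r(x)<p^-\le p(x)$), hence for $0<t<1$,
\[
\lambda\int_{\mathbb{R}^N}\frac{a(x)}{r(x)}(t\phi)^{r(x)}\diff x\ge \frac{\lambda}{r^+}\,t^{p^--\sigma'}\int_{B}a(x)\phi^{r(x)}\diff x=:C_2\,t^{p^--\sigma'},
\]
with $C_2>0$. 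Discarding the nonnegative $b$-term (which only helps), we obtain
\[
J_1(t\phi)\le C_1 t^{p^-}-C_2 t^{p^--\sigma'}=t^{p^--\sigma'}\bigl(C_1 t^{\sigma'}-C_2\bigr),
\]
which is strictly negative for all $t>0$ small enough, as desired.

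The main obstacle is organizing the exponent bookkeeping cleanly: one must ensure that on the chosen ball the inequality $r(x)<p(x)$ can be upgraded to $r(x)\le p^--\sigma'$ (so that the $t$-power of the concave term genuinely beats $t^{p^-}$), which requires shrinking $B$ and invoking uniform continuity of $p$ and $r$ on the compact closure $\overline{B}$; and one must handle the passage between modular and norm (Proposition~\ref{2.prop2}) carefully since $\|t\phi\|$ may cross $1$. Once $B$ is fixed with these properties, the estimate above is routine. I would also remark that this is exactly the argument in \cite[Proof of Lemma 3.6]{Ky1}, now verified to go through verbatim in the weighted whole-space setting because all the relevant integrals localize to the bounded set $B$, where $W_V^{1,p(x)}(B)=W^{1,p(x)}(B)$ and the weight $a$ is merely positive.
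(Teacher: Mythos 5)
Your overall strategy (localize on a small ball where $r<p$, take a nonnegative cut-off supported there, and compare the $t$-powers of the modular terms) is exactly the intended one — the paper simply refers to \cite[Proof of Lemma 3.6]{Ky1}, which runs along these lines. However, there is a genuine error in your exponent bookkeeping. You claim that since $r(x_0)<p(x_0)$, one can shrink $B$ so that $r(x)<p^-$ on $B$, where $p^-$ is the \emph{global} infimum of $p$. This implication is false: $r(x_0)<p(x_0)$ does not force $r(x_0)<p^-$. A concrete counterexample is $r\equiv 2$, $p^-=2$, with $p(x_0)=3$; then $\{x:r(x)<p(x)\}\neq\emptyset$ but $r(x)=p^-$ everywhere, so no shrinking of $B$ produces $r<p^-$ and your final display degenerates to $J_1(t\phi)\le C_1t^{2}-C_2t^{2}$, which is inconclusive. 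This is not a marginal case: the lemma is invoked in Theorem~\ref{IV.main2}(i) precisely under $r^+=p^-$ together with $\{x:r(x)<p(x)\}\neq\emptyset$, where $r(x_0)=p^-<p(x_0)$ is exactly what one expects.

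The repair is to compare the \emph{local} bounds of the exponents on $B$ rather than mixing local and global ones. Since $\operatorname{supp}\phi\subset B$ and $t\in(0,1)$, one has $t^{p(x)}\le t^{(p|_B)^-}$ and $t^{r(x)}\ge t^{(r|_B)^+}$ for $x\in B$, so
\[
J_1(t\phi)\le C_1\,t^{(p|_B)^-}-\frac{\lambda}{r^+}\,t^{(r|_B)^+}\int_B a(x)\phi^{r(x)}\diff x ,
\]
and by continuity of $p$ and $r$ one can choose $\epsilon$ so small that $(r|_{B_\epsilon(x_0)})^+<(p|_{B_\epsilon(x_0)})^-$, which follows from $r(x_0)<p(x_0)$ alone. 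With that single change the conclusion $J_1(t\phi)<0$ for small $t>0$ follows as you wrote; the rest of your argument (finiteness of the integrals, positivity of the $a$-mass on $B$, discarding the $b$-term) is correct.
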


We have the following local compactness condition, which is essential for our argument to obtain the existence of solutions.
\begin{lemma} \label{IV.PS2}
	Assume that $(\mathcal{P}),$ $(\mathcal{C}),$ $(\mathcal{E}_\infty),$ and $(\mathcal{S})$ hold such that $r^+\leq p^-\leq p^+<q^-$ and  $a^{\frac{q}{q-r}}b^{-\frac{r}{q-r}}\in L^1(\mathbb{R}^N).$ If $r^+<p^-\leq p^+<q^-$, then  for each $\lambda>0,\ \theta>0$, there exists a positive constant $K=K(p,q,r,a,b)$ such that $J_1$ satisfies the $(PS)_c$ condition for all
	\begin{align*}
	c<(\frac{1}{p^+}-\frac{1}{q^-})&\frac{1}{2^{(qh)^+}}\min\{S^{(qh)^+},S^{(hq)^-}\}\min\{\theta^{-h^+},\theta^{-h^-}\}\\
	&-K\max\{\theta^{-\frac{1}{l^+-1}},\theta^{-\frac{1}{l^--1}}\}\max\{\lambda^{\frac{l^+}{l^+-1}},\lambda^{\frac{l^-}{l^--1}}\},
	\end{align*}
	where $S$ is defined as in \eqref{3.best.const} and $h(x):=\frac{p(x)}{q(x)-p(x)},\ l(x):=\frac{q(x)}{r(x)}$ for all $x\in\mathbb{R}^N$. If $r^+= p^-\leq p^+<q^-$ and $r^-<p^+,$ the conclusion remains valid for $0<\lambda<\lambda_2:=\frac{\frac{1}{p^+}-\frac{1}{q^-}}{(\frac{1}{r^-}-\frac{1}{q^-})C_r^{r^+}},$ where $C_r$ is an imbedding constant for $X\hookrightarrow L^{r(x)}(a,\mathbb{R}^N)$ and $\theta>0.$
\end{lemma}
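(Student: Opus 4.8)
The plan is to run the concentration-compactness scheme of the proof of Lemma~\ref{IV.PS1}, the new point being the use of Young's inequality together with the hypothesis $a^{\frac{q}{q-r}}b^{-\frac{r}{q-r}}\in L^1(\mathbb{R}^N)$ to absorb the concave term. First I would check that any $\textup{(PS)}_c$ sequence $\{u_n\}$ for $J_1$ is bounded in $X$: writing $\rho(u):=\int_{\mathbb{R}^N}[|\nabla u|^{p(x)}+V(x)|u|^{p(x)}]\diff x$ and discarding the nonnegative term $\theta\int_{\mathbb{R}^N}(\frac{1}{q^-}-\frac{1}{q(x)})b(u_n^+)^{q(x)}\diff x$ in $J_1(u_n)-\frac{1}{q^-}\langle J_1'(u_n),u_n\rangle$, then using Lemma~\ref{IV.compact}, one gets a bound of the form $\rho(u_n)\lesssim 1+\|u_n\|^{r^+}$; this forces boundedness when $r^+<p^-$, while when $r^+=p^-$ the same inequality reads $(\frac{1}{p^+}-\frac{1}{q^-})\rho(u_n)\lesssim 1+\lambda(\frac{1}{r^-}-\frac{1}{q^-})C_r^{r^+}\|u_n\|^{p^-}$, so boundedness holds precisely because $\lambda<\lambda_2$. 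As in \cite[Proof of Lemma 4.3]{Ky2}, testing $J_1'(u_n)$ with $-u_n^-$ shows $u_n^-\to 0$ in $X$, so $v_n:=u_n^+\ge 0$ is again a $\textup{(PS)}_c$ sequence and it suffices to show that a subsequence of $\{v_n\}$ converges in $X$.

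Next I would invoke Theorems~\ref{III.ccp} and \ref{III.ccp.infinity} and Lemma~\ref{IV.compact}: along a subsequence, $v_n\to v\ge 0$ a.e.\ and in $L^{r(x)}(a,\mathbb{R}^N)$, $v_n\rightharpoonup v$ in $X$, $|\nabla v_n|^{p(x)}+V|v_n|^{p(x)}\overset{\ast}{\rightharpoonup}\mu\ge |\nabla v|^{p(x)}+V|v|^{p(x)}+\sum_{i\in I}\mu_i\delta_{x_i}$, $b|v_n|^{q(x)}\overset{\ast}{\rightharpoonup}\nu=b|v|^{q(x)}+\sum_{i\in I}\nu_i\delta_{x_i}$, $S\nu_i^{1/q(x_i)}\le\mu_i^{1/p(x_i)}$, $\frac12 S\nu_\infty^{1/q_\infty}\le\mu_\infty^{1/p_\infty}$, and (after a further subsequence) $\int_{\mathbb{R}^N}b|v_n|^{q(x)}\diff x\to\nu(\mathbb{R}^N)+\nu_\infty$. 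Arguing exactly as in the proof of Lemma~\ref{IV.PS1} — testing $J_1'(v_n)$ with $\phi_{i,\epsilon}v_n$ and with $\phi_R v_n$, using the boundedness of $\{v_n\}$, the convergences $v_n\to v$ in $L^{r(x)}(a,\mathbb{R}^N)$ and in $L_{\loc}^{p(x)}(\mathbb{R}^N)$, and Young's inequality on the gradient cross terms — one obtains $\mu_i=\theta\nu_i$ for each $i\in I$ and $\mu_\infty=\theta\nu_\infty$; hence $\theta\nu_i\ge \min\{S^{(qh)^+},S^{(qh)^-}\}\min\{\theta^{-h^+},\theta^{-h^-}\}$, and $\theta\nu_\infty\ge 2^{-(qh)^+}\min\{S^{(qh)^+},S^{(qh)^-}\}\min\{\theta^{-h^+},\theta^{-h^-}\}$ whenever $\nu_\infty>0$.

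The crucial new estimate is on the level $c$. Since $r(x)=q(x)/l(x)$, I would write $a\,v^{r(x)}=\big(a(\theta b)^{-1/l(x)}\big)\big(\theta b\,v^{q(x)}\big)^{1/l(x)}$ and apply Young's inequality with the conjugate exponents $l(x)$ and $l'(x):=l(x)/(l(x)-1)$ and a small multiplicative parameter; since $a^{l'(x)}b^{-l'(x)/l(x)}=a^{\frac{q}{q-r}}b^{-\frac{r}{q-r}}\in L^1(\mathbb{R}^N)$, this gives for every $\epsilon>0$
\[
\lambda\Big(\tfrac{1}{r^-}-\tfrac{1}{q^-}\Big)\int_{\mathbb{R}^N}a\,v^{r(x)}\diff x\le \epsilon\,\theta\int_{\mathbb{R}^N}b\,v^{q(x)}\diff x+C_\epsilon\max\{\theta^{-\frac{1}{l^+-1}},\theta^{-\frac{1}{l^--1}}\}\max\{\lambda^{\frac{l^+}{l^+-1}},\lambda^{\frac{l^-}{l^--1}}\}\big\|a^{\frac{q}{q-r}}b^{-\frac{r}{q-r}}\big\|_{L^1}.
\]
On the other hand, $\langle J_1'(v_n),v_n\rangle=o(1)$ gives $\rho(v_n)\ge\theta\int_{\mathbb{R}^N}b\,v_n^{q(x)}\diff x+o(1)$, so letting $n\to\infty$ in $J_1(v_n)-\frac{1}{q^-}\langle J_1'(v_n),v_n\rangle$ and using $\int_{\mathbb{R}^N}b\,v^{q(x)}\diff x\le\nu(\mathbb{R}^N)$,
\[
c\ \ge\ \Big(\tfrac{1}{p^+}-\tfrac{1}{q^-}\Big)\theta\,[\nu(\mathbb{R}^N)+\nu_\infty]-\lambda\Big(\tfrac{1}{r^-}-\tfrac{1}{q^-}\Big)\int_{\mathbb{R}^N}a\,v^{r(x)}\diff x.
\]
Taking $\epsilon=\frac12(\frac{1}{p^+}-\frac{1}{q^-})$ and combining the two displays yields, with $K=K(p,q,r,a,b)$ absorbing $C_\epsilon$ and $\|a^{\frac{q}{q-r}}b^{-\frac{r}{q-r}}\|_{L^1}$,
\[
c\ \ge\ \tfrac12\Big(\tfrac{1}{p^+}-\tfrac{1}{q^-}\Big)\theta\,\nu(\mathbb{R}^N)+\Big(\tfrac{1}{p^+}-\tfrac{1}{q^-}\Big)\theta\,\nu_\infty-K\max\{\theta^{-\frac{1}{l^+-1}},\theta^{-\frac{1}{l^--1}}\}\max\{\lambda^{\frac{l^+}{l^+-1}},\lambda^{\frac{l^-}{l^--1}}\}.
\]
If $I\ne\emptyset$ then $\nu(\mathbb{R}^N)\ge\nu_i$ for some $i$, and since $2^{(qh)^+}\ge 2$ the lower bound on $\theta\nu_i$ makes $c$ exceed the threshold in the statement, a contradiction; similarly $\nu_\infty>0$ contradicts the threshold via the lower bound on $\theta\nu_\infty$. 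Hence $I=\emptyset$ and $\nu_\infty=0$.

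Finally, with $I=\emptyset$ and $\nu_\infty=0$ one has $\nu=b|v|^{q(x)}$, so $\int_{\mathbb{R}^N}b\,v_n^{q(x)}\diff x\to\int_{\mathbb{R}^N}b\,v^{q(x)}\diff x$; Lemma~\ref{III.brezis-lieb} then gives $v_n\to v$ in $L^{q(x)}(b,\mathbb{R}^N)$, hence $\int_{\mathbb{R}^N}b\,v_n^{q(x)-1}(v_n-v)\diff x\to 0$ by Proposition~\ref{2.prop1}, while $\int_{\mathbb{R}^N}a\,v_n^{r(x)-1}(v_n-v)\diff x\to 0$ by Lemma~\ref{IV.compact}. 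Since $\langle J_1'(v_n),v_n-v\rangle\to 0$, these imply $\limsup_n\int_{\mathbb{R}^N}[|\nabla v_n|^{p(x)-2}\nabla v_n\cdot\nabla(v_n-v)+V|v_n|^{p(x)-2}v_n(v_n-v)]\diff x\le 0$, and Lemma~\ref{IV.S+} gives $v_n\to v$ in $X$, whence $u_n\to v$ in $X$. The main obstacle I anticipate is the Young-inequality bookkeeping of the third paragraph: producing exactly the powers of $\lambda$ and $\theta$ that appear in the threshold and reconciling the choice $\epsilon=\frac12(\frac{1}{p^+}-\frac{1}{q^-})$ with the constant $2^{-(qh)^+}$ in the statement; the remainder is a routine transcription of the proof of Lemma~\ref{IV.PS1}.
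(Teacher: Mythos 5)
Your proposal is correct and follows essentially the same route as the paper: boundedness of the (PS)$_c$ sequence (with $\lambda<\lambda_2$ handling the borderline case $r^+=p^-$), the two concentration-compactness principles, the identities $\mu_i=\theta\nu_i$ and $\mu_\infty=\theta\nu_\infty$, and a level estimate combining $J_1(v_n)-\text{const}\cdot\langle J_1'(v_n),v_n\rangle$ with the lower bounds on $\theta\nu_i$ and $\theta\nu_\infty$ to rule out atoms and loss of mass at infinity. The only difference is cosmetic: you use the multiplier $1/q^-$ instead of the paper's $1/p^+$ (changing $\frac{1}{r^-}-\frac{1}{p^+}$ to $\frac{1}{r^-}-\frac{1}{q^-}$ in front of the concave term), and you write out explicitly the pointwise Young's inequality with exponents $l(x)$, $l'(x)=\frac{q(x)}{q(x)-r(x)}$ that produces the $K\max\{\theta^{-\frac{1}{l^+-1}},\theta^{-\frac{1}{l^--1}}\}\max\{\lambda^{\frac{l^+}{l^+-1}},\lambda^{\frac{l^-}{l^--1}}\}$ correction, a step the paper delegates to the proof of \cite[Lemma 4.8]{Ky2}.
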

\begin{proof}
	Let $\lambda>0,\ \theta>0$ and let $\{u_n\}$ be a $\textup{(PS)}_c$ sequence for $J_1$. As in \cite[Proof of Lemma 4.8]{Ky2}, we easily deduce the boundedness of $\{u_n\}$ in both cases $r^+<p^-\leq p^+<q^-$ with any $\lambda>0,\ \theta>0$ and $r^+= p^-\leq p^+<q^-$ with $0<\lambda<\lambda_2,\ \theta>0$. Moreover, we have that $u_n^-\to 0$ in $X$ and up to a subseqence of $v_n:=u_n^+\ (n=1,2,\cdots),$
	\begin{gather*}
	v_n(x) \to v(x)\geq 0  \quad \text{for a.e.} \ \ x\in\mathbb{R}^N,\label{4.a.e2}\\
	v_n \rightharpoonup v  \quad \text{in} \  X,\label{4.weak2}\\
	|\nabla v_n|^{p(x)} +V|v_n|^{p(x)}\overset{\ast }{\rightharpoonup }\mu \geq |\nabla v|^{p(x)}+V|v|^{p(x)} + \sum_{i\in I} \mu_i \delta_{x_i} \ \text{in}\  \mathcal{M}(\mathbb{R}^N),\label{4.mu2}\\
	b|v_n|^{q(x)}\overset{\ast }{\rightharpoonup }\nu=b|v|^{q(x)} + \sum_{i\in I}\nu_i\delta_{x_i} \ \text{in}\ \mathcal{M}(\mathbb{R}^N),\label{4.nu2}\\
	S \nu_i^{1/q(x_i)} \leq \mu_i^{1/p(x_i)}, \ \forall i\in I,\label{4.mu-nu2}
	\end{gather*}
	and
	\begin{gather*}
	\underset{n\to\infty}{\lim\sup}\int_{\mathbb{R}^N}\left[|\nabla v_n|^{p(x)}+V(x)|v_n|^{p(x)}\right]\diff x=\mu(\mathbb{R}^N)+\mu_\infty,\label{4.mu_infinity2}\\
	\underset{n\to\infty}{\lim\sup}\int_{\mathbb{R}^N}b(x)|v_n|^{q(x)}\diff x=\nu(\mathbb{R}^N)+\nu_\infty,\label{4.nu_infinity2}\\
	\frac{1}{2}S \nu_\infty^{1/q_\infty} \leq \mu_\infty^{1/p_\infty}.\label{4.mu_inf-nu_inf2}
	\end{gather*}
	As before, we want to show $I = \emptyset$ and $\nu_\infty=0.$ Suppose by contradiction that $I \ne \emptyset$ or $\nu_\infty>0.$ We have
	\begin{align}\label{4.PS2.est.c}
	c&=\lim_{n\to\infty}[J_1(v_n)-\frac{1}{p^+}\langle J'_1(v_n) ,v_n\rangle]\notag\\
	&\geq \underset{n\to\infty}{\lim\sup}\left[\theta\left(\frac{1}{p^+}-\frac{1}{q^-}\right)\int_{\mathbb{R}^N}b(x)v_n^{q(x)}\diff x-\lambda \left(\frac{1}{r^-}-\frac{1}{p^+}\right)\int_{\mathbb{R}^N}a(x)v_n^{r(x)}\diff x\right]\notag\\
	&=\theta\left(\frac{1}{p^+}-\frac{1}{q^-}\right)\left[\nu(\mathbb{R}^N)+\nu_\infty\right]-\lambda \left(\frac{1}{r^-}-\frac{1}{p^+}\right)\int_{\mathbb{R}^N}a(x)v^{r(x)}\diff x\notag\\
	&=\theta\left(\frac{1}{p^+}-\frac{1}{q^-}\right)\left[\int_{\mathbb{R}^N}b(x)v^{q(x)}\diff x+{\sum_{i\in I\cup\{\infty\}}\nu_i}\right]-\lambda \left(\frac{1}{r^-}-\frac{1}{p^+}\right)\int_{\mathbb{R}^N}a(x)v^{r(x)}\diff x.
	\end{align}
	If there exists $i\in I$, then as in the proof of Lemma~\ref{IV.PS1}, we deduce
$$	\theta \nu_i=\mu_i\geq \min\{S^{(qh)^+},S^{(qh)^-}\}\min\{\theta^{-h^+},\theta^{-h^-}\}.$$
	If $\nu_\infty>0,$ then as in the proof of Lemma~\ref{IV.PS1} we also deduce
	\begin{align*}
\theta \nu_\infty=\mu_\infty&\geq \min\big\{(\frac{1}{2}S)^{(qh)^+},(\frac{1}{2}S)^{(qh)^-}\big\}\min\{\theta^{-h^+},\theta^{-h^-}\}\\
&\geq \frac{1}{2^{(qh)^+}}\min\{S^{(qh)^+},S^{(qh)^-}\}\min\{\theta^{-h^+},\theta^{-h^-}\}=:k(\theta).
	\end{align*}
From these facts together with \eqref{4.PS2.est.c}, we obtain
	$$c\geq \theta\left(\frac{1}{p^+}-\frac{1}{q^-}\right)\int_{\mathbb{R}^N}b(x)v^{q(x)}\diff x+ \left(\frac{1}{p^+}-\frac{1}{q^-}\right)k(\theta)-\lambda \left(\frac{1}{r^-}-\frac{1}{p^+}\right)\int_{\mathbb{R}^N}a(x)v^{r(x)}\diff x.$$
	The rest of the proof is similar to the proof of \cite[Lemma 4.8]{Ky2}.
\end{proof}
\begin{remark}\label{IV.J}
	It is worth noting that Lemmas~\ref{IV.geometry2.1}-\ref{IV.PS2} remain valid for $J$ instead of $J_1$ with similar proofs (to obtain the conclusion of Lemma~\ref{IV.PS2} for $J$ in place of $J_1,$ we directly argue with $\{u_n\}$ in place of $\{v_n\}$).
\end{remark}

The proofs of Theorems \ref{IV.main2} and \ref{IV.main3} are almost identical with that of \cite[Theorems 4.4 and 4.5]{Ky2} if we replace their lemmas by   Lemmas ~\ref{IV.geometry2.1}-\ref{IV.PS2} and notice Remark~\ref{IV.J}.


\appendix
\section{Proofs of Lemmas~\ref{III.critical.imb.loc} and \ref{III.critical.imb}} \label{AppendixA}
\begin{proof}[Proof of Lemma~\ref{III.critical.imb.loc}]
	The conclusion is obvious since $W_V^{1,p(x)}\left(B_R\right)=W^{1,p(x)}\left(B_R\right)$ for any $R>0$ due to the assumption of the lemma.
\end{proof}
\begin{proof}[Proof of Lemma~\ref{III.critical.imb}]
	
	We shall prove that there exists a positive constant $C$ such that
\begin{equation}\label{1imbedding.ineq}
|u|_{L^{q(x) }\left(b,\mathbb{R}^N \right)}\leq C\|u\|,\quad \forall u\in   X.
\end{equation}	
We only prove \eqref{1imbedding.ineq} for the case $\Omega\ne\emptyset$ since the proof for the case $\Omega=\emptyset$ is similar but also simpler.  By definition of $X$, it suffices to prove \eqref{1imbedding.ineq} for $u\in C_c^\infty(\mathbb{R}^N).$ Let $u\in C_c^\infty(\mathbb{R}^N)$ be arbitrary. In the rest of the proof, $C_i$ ($i\in\mathbb{N}$) is a positive constant independent of $u$. Utilizing assumptions $(\mathcal{C})$, $(\mathcal{P})$, and Propositions~\ref{2.prop1} and \ref{2.Sim-Kim} 
we estimate
\begin{align}\label{est.Int1}
\int_{\Omega}b(x)|u|^{q(x)}\diff x
&\leq 2|b|_{L^{\beta(x)}(\Omega)}\big||u|^{q}\big|_{L^{\beta'(x)}(\Omega)}\notag\\
&\leq 2|b|_{L^{\beta(x)}(\Omega)}\left[1+|u|^{q^+}_{L^{q(x)\beta'(x)}(\Omega)}\right]\notag\\
&\leq 2|b|_{L^{\beta(x)}(\Omega)}\left[1+C_1\|u\|^{q^+}_{W^{1,p(x)}\left(\Omega\right)}\right]\notag\\
&\leq 2|b|_{L^{\beta(x)}(\Omega)}\left(1+C_2\|u\|^{q^+}\right),
\end{align}
where $\beta'(x):=\frac{\beta(x)}{\beta(x)-1}.$ Next, we estimate $\int_{\Omega^c}b(x)|u|^{q(x)}\diff x$. For the case $\underset{x\in\mathbb{R}^N}{\essinf}\ V(x)>0$, using assumption $(\mathcal{C})$, and Propositions~\ref{2.prop2} and \ref{2.prop.unbounded}, we have
	\begin{align*}
	\int_{\Omega^c}b(x)|u|^{q(x)}\diff x\leq b_0\int_{\Omega^c}|u|^{q(x)}\diff x&\leq b_0\left[1+|u|^{q^+}_{L^{q(x)}(\Omega^c)}\right]\notag\\
	&\leq b_0\left[1+C_3\|u\|_{W^{1,p(x)}\left(\Omega^c\right)}^{q^+}\right]\notag\\
	&\leq b_0\left(1+C_4\|u\|^{q^+}\right).\notag
	\end{align*}
	For the case $\underset{x\in\mathbb{R}^N}{\essinf}\ V(x)=0$, then by $(\mathcal{P} )$, $p$ satisfies the log-H\"older decay condition and $|E|<\infty,$ where  $E:=\{x\in\mathbb{R}^N:q(x)\ne p^\ast(x)\}$. We first notice that
	\begin{align}\label{Est.I2}
	\int_{\Omega^c}b(x)|u|^{q(x)}\diff x\leq b_0\int_{\Omega^c}|u|^{q(x)}\diff x&
	= b_0\int_{\Omega^c\setminus E}|u|^{p^\ast(x)}\diff x+b_0\int_{\Omega^c\cap E}|u|^{q(x)}\diff x\notag\\
	&
	\leq b_0\int_{\Omega^c\setminus E}|u|^{p^\ast(x)}\diff x+b_0\int_{\Omega^c\cap E}\left[1+|u|^{p^\ast(x)}\right]\diff x\notag\\
	&\leq b_0\int_{\Omega^c}|u|^{p^\ast(x)}\diff x+b_0|E|.
	\end{align}
	Then, fix $R>0$ such that $\overline{\Omega}\subset B_R$ and let $\phi\in C^\infty(\mathbb{R}^N)$ satisfy $\chi_{B_{R+1}^c}\leq \phi\leq \chi_{B_R^c}.$ Clearly, $\phi u\in C_c^\infty(\Omega^c).$ Invoking Proposition~\ref{2.prop2}, \cite[Theorem 8.3.1]{Diening} and Lemma~\ref{III.critical.imb.loc} we have
	\begin{align*}
	\int_{\Omega^c}|u|^{p^\ast(x)}\diff x&\leq \int_{B_{R+1}\setminus\overline{\Omega}}|u|^{p^\ast(x)}\diff x +\int_{\Omega^c}|\phi u|^{p^\ast(x)}\diff x\notag\\
&
\leq 2+|u|^{(p^\ast)^+}_{L^{p^\ast(x)}(B_{R+1}\setminus\overline{\Omega})}+|\phi u|^{(p^\ast)^+}_{L^{p^\ast(x)}(\Omega^c)}\notag\\
&\leq 2+C_5\|u\|^{(p^\ast)^+}_{W^{1,p(x)}(B_{R+1}\setminus\overline{\Omega})}+C_6\big||\nabla(\phi u)|\big|^{(p^\ast)^+}_{L^{p(x)}(\Omega^c)}\notag\\
&	\leq 2+C_7\|u\|^{(p^\ast)^+}_{W_V^{1,p(x)}(B_{R+1}\setminus\overline{\Omega})}+C_6\left[\big||(\nabla\phi) u|\big|_{L^{p(x)}(\Omega^c)}+\big||\phi\nabla u|\big|_{L^{p(x)}(\Omega^c)}\right]^{(p^\ast)^+}\notag\\
&	\leq 2+C_7\|u\|^{(p^\ast)^+}_{W_V^{1,p(x)}(B_{R+1}\setminus\overline{\Omega})}+C_8\left[|u|_{L^{p(x)}(B_{R+1}\setminus\overline{B_R})}+\big||\nabla u|\big|_{L^{p(x)}(\Omega^c)}\right]^{(p^\ast)^+}\notag\\
&	\leq C_9\left[1+ \|u\|^{(p^\ast)^+}\right].
	\end{align*}
Combining this and \eqref{Est.I2}, we obtain that for the case $\underset{x\in\mathbb{R}^N}{\essinf}\ V(x)=0$,	
$$\int_{\Omega^c}|u|^{q(x)}\diff x\leq C_{10}\left[1+ \|u\|^{(p^\ast)^+}\right].$$
That is, in any case we have 
$$\int_{\Omega^c}b(x)|u|^{q(x)}\diff x\leq C_{11}\left[1+ \|u\|^{(p^\ast)^+}\right].$$
Combining this and \eqref{est.Int1}, we obtain
	$$\int_{\mathbb{R}^N}b(x)|u|^{q(x)}\diff x\leq C_{12}\left[1+\|u\|^{(p^\ast)^+}\right].$$
From this, we deduce
	$$|u|_{L^{q(x)}(b,\mathbb{R}^N)}\leq 1+\left(\int_{\mathbb{R}^N}b(x)|u|^{q(x)}\diff x\right)^{\frac{1}{q^-}}\leq 1+C_{12}^{\frac{1}{q^-}}\left[1+\|u\|^{(p^\ast)^+}\right]^{\frac{1}{q^-}}$$
	in view of Proposition~\ref{2.prop2}. Thus, we obtain \eqref{1imbedding.ineq} with $C:=1+(2C_{12})^{\frac{1}{q^-}}$ and the proof is complete.
\end{proof}

\section{Proof of Lemma~\ref{III.le.est.measure}} \label{AppendixB}
\begin{proof}[Proof of Lemma~\ref{III.le.est.measure}]
	We first prove that there exists $\delta_0>0$ such that for all open set $O$ of $\mathbb{R}^N,$ we have $\nu(O)=0$ or $\nu(O)>\delta_0.$
	Indeed, by an approximation argument it is not difficult to see that \eqref{3.reverse.ineq} holds for all $\phi=\chi_A$ with any compact $A$ of $\mathbb{R}^N.$ Fix a compact $A$ of $\mathbb{R}^N$. Since $p,q\in C(\mathbb{R}^N),$ for each $x\in\mathbb{R}^N$ there exists $\epsilon=\epsilon(x)>0$ such that
	$$|p(y)-p(x)|<\frac{\delta}{4},\ |r(y)-r(x)|<\frac{\delta}{4}\quad \text{for all}\ y\in \overline{B_\epsilon(x)},$$
	where $\delta:=\underset{x\in\mathbb{R}^N}{\inf}(r(x)-p(x)).$ Thus, we have
	
	$$\left(p|_{B_\epsilon(x)}\right)^+<p(x)+\frac{\delta}{4}<r(x)-\frac{\delta}{4}<\left(r|_{B_\epsilon(x)}\right)^-,$$
	and hence,
	\begin{equation*}
	\left(r|_{B_\epsilon(x)}\right)^--\left(p|_{B_\epsilon(x)}\right)^+>\frac{\delta}{2}.
	\end{equation*}
	Since $A$ is compact, we have a finite covering of $A$ of balls $\{B_{\epsilon_i}(x_i)\}_{i=1}^n$ with
	\begin{equation}\label{3.r+p-}
	\left(r|_{B_{\epsilon_i}(x_i)}\right)^--\left(p|_{B_{\epsilon_i}(x_i)}\right)^+>\frac{\delta}{2},\quad i=1,\cdots,n.
	\end{equation}
	Fix any $i\in\{1,\cdots,n\}.$ Applying \eqref{3.reverse.ineq}, we get
	\begin{equation}\label{3.est.le.3.4}
	|\chi_{A\cap \overline{B_{\epsilon_i}(x_i)}}|_{L_\nu^{r(x)}(\mathbb{R}^N)}\leq C|\chi_{A\cap \overline{B_{\epsilon_i}(x_i)}}|_{L_\nu^{p(x)}(\mathbb{R}^N)}.
	\end{equation}
	If $\nu \left(A\cap \overline{B_{\epsilon_i}(x_i)}\right)\leq 1,$ then by invoking Proposition~\ref{2.prop2} we have
	\begin{align*}
	|\chi_{A\cap \overline{B_{\epsilon_i}(x_i)}}|_{L_\nu^{p(x)}(\mathbb{R}^N)}^{\left(p|_{B_{\epsilon_i}(x_i)}\right)^+}\leq \int_{\mathbb{R}^N}|&\chi_{A\cap \overline{B_{\epsilon_i}(x_i)}}|^{p(x)}\diff\nu=\nu \left(A\cap \overline{B_{\epsilon_i}(x_i)}\right)\\
	&= \int_{\mathbb{R}^N}|\chi_{A\cap \overline{B_{\epsilon_i}(x_i)}}|^{r(x)}\diff\nu\leq |\chi_{A\cap \overline{B_{\epsilon_i}(x_i)}}|_{L_\nu^{r(x)}(\mathbb{R}^N)}^{\left(r|_{B_{\epsilon_i}(x_i)}\right)^-}.
	\end{align*}
	Combining this with \eqref{3.est.le.3.4}, we have
	\begin{align*}
	\nu \left(A\cap \overline{B_{\epsilon_i}(x_i)}\right)^{\frac{1}{\left(r|_{B_{\epsilon_i}(x_i)}\right)^-}}&\leq |\chi_{A\cap \overline{B_{\epsilon_i}(x_i)}}|_{L_\nu^{r(x)}(\mathbb{R}^N)}\leq (C+1)|\chi_{A\cap \overline{B_{\epsilon_i}(x_i)}}|_{L_\nu^{p(x)}(\mathbb{R}^N)}\\
	&\leq (C+1) \nu \left(A\cap \overline{B_{\epsilon_i}(x_i)}\right)^{\frac{1}{\left(p|_{B_{\epsilon_i}(x_i)}\right)^+}}.
	\end{align*}
	By this and \eqref{3.r+p-} we obtain
	$\nu \left(A\cap \overline{B_{\epsilon_i}(x_i)}\right)=0$ or
	$$\nu \left(A\cap \overline{B_{\epsilon_i}(x_i)}\right)\geq \left(\frac{1}{C+1}\right)^{\frac{\left(p|_{B_{\epsilon_i}(x_i)}\right)^+\left(r|_{B_{\epsilon_i}(x_i)}\right)^-}{\left(r|_{B_{\epsilon_i}(x_i)}\right)^--\left(p|_{B_{\epsilon_i}(x_i)}\right)^+}}\geq \left(\frac{1}{C+1}\right)^{\frac{2p^+r^+}{\delta}}=:\delta_0>0.$$
	We deduce that $\nu(A)=0$ or $\nu(A)>\delta_0$ for all compact $A$ of $\mathbb{R}^N.$ Since $\nu$ is regular, we have $\nu(O)=0$ or $\nu(O)>\delta_0$ for any open $O$ of $\mathbb{R}^N.$ The conclusion of the lemma now follows from \cite[Lemma 3.3]{Bonder}.
\end{proof}


\section{Proof of Lemma~\ref{IV.compact}} \label{AppendixC}
\begin{proof}[Proof of Lemma~\ref{IV.compact}]
Let $u\in X$. In the rest of the proof, $C,C_i$ ($i\in\mathbb{N}$) are positive constants independent of $u$. By Lemma~\ref{III.critical.imb}, we have $u\in L^{p^\ast(x)}(\mathbb{R}^N)$ and 
$$|u|_{L^{p^\ast(x)}(\mathbb{R}^N)}\leq C\|u\|.$$
Invoking this inequality and Propositions~\ref{2.prop1}, \ref{2.Sim-Kim} and \ref{2.prop.unbounded}, we estimate
	\begin{align*}
	\int_{\mathbb{R}^N}a(x)|u|^{r(x)}\diff x&\leq 2|a|_{L^{\frac{p^\ast(x)}{p^\ast(x)-r(x)}}(\mathbb{R}^N)}\big||u|^{r}\big|_{L^{\frac{p^\ast(x)}{r(x)}}(\mathbb{R}^N)}\notag\\
	&\leq 2|a|_{L^{\frac{p^\ast(x)}{p^\ast(x)-r(x)}}(\mathbb{R}^N)}\left[1+|u|^{r^+}_{L^{p^\ast(x)}(\mathbb{R}^N)}\right]\\
	&\leq 2|a|_{L^{\frac{p^\ast(x)}{p^\ast(x)-r(x)}}(\mathbb{R}^N)}\left(1+C^{r^+}\|u\|^{r^+}\right).
	\end{align*}
	From this, we deduce
	$$|u|_{L^{r(x)}(a,\mathbb{R}^N)}\leq 1+\left(\int_{\mathbb{R}^N}a(x)|u|^{r(x)}\diff x\right)^{\frac{1}{r^-}}\leq 1+C_1^{\frac{1}{r^-}}\left(1+\|u\|^{r^+}\right)^{\frac{1}{r^-}}$$
	in view of Proposition~\ref{2.prop2}. Thus, we obtain
	$$|u|_{L^{r(x)}(a,\mathbb{R}^N)}\leq \left[1+(2C_1)^{\frac{1}{r^-}}\right]\|u\|,\ \forall u\in X,$$
	i.e., $X\hookrightarrow L^{r(x)}(a,\mathbb{R}^N).$ Next, we show the compactness of this imbedding. Let $u_n\rightharpoonup 0$ in $X.$ We shall show that $u_n\to 0$ in $ L^{r(x)}(a,\mathbb{R}^N)$, equivalently, $\int_{ \mathbb{R}^N}a(x)|u_n|^{r(x)}\diff x\to 0$. By Lemma~\ref{III.critical.imb.loc}, up to a subsequence we have $u_n\to 0$ a.e. in $\mathbb{R}^N.$ Next, we show that for a given $\epsilon\in (0,1)$,
	\begin{itemize}
		\item[$(1^0)$] there exists $\delta=\delta(\epsilon)>0$ such that for any measurable subset $Q\subset\mathbb{R}^N$ with $|Q|<\delta,$ we have
		$\int_{Q}a(x)|u_n|^{r(x)}\diff x<\epsilon$ for all $n\in\mathbb{N};$
		\item[$(2^0)$] there exists $R>0$ such that 	$\int_{B_R^c}a(x)|u_n|^{r(x)}\diff x<\epsilon$ for all $n\in\mathbb{N}.$
	\end{itemize}
Indeed, by the boundedness of $\{u_n\}$ in $X,$ we may assume that
	\begin{equation}\label{Proof4.1.boundedness}
	\|u_n\|\leq 1,\quad \forall n\in\mathbb{N}.
	\end{equation}
On the other hand, the assumption $a\in L^{\frac{p^\ast(x)}{p^\ast(x)-r(x)}}(\mathbb{R}^N)$ implies that there exist $\delta=\delta(\epsilon)>0$ and $R=R(\epsilon)>0$ such that 
$$\int_{B_R^c}|a(x)|^{\frac{p^\ast(x)}{p^\ast(x)-r(x)}}\diff x<\left(\frac{\epsilon}{2(1+C^{r^+})}\right)^{\left(\frac{p^\ast}{p^\ast-r}\right)^+}<1,$$
and for any measurable subset $Q\subset\mathbb{R}^N$ with $|Q|<\delta$
we have
$$\int_{Q}|a(x)|^{\frac{p^\ast(x)}{p^\ast(x)-r(x)}}\diff x<\left(\frac{\epsilon}{2(1+C^{r^+})}\right)^{\left(\frac{p^\ast}{p^\ast-r}\right)^+}.$$
Let $Q$ be a measurable subset $Q\subset\mathbb{R}^N$ with $|Q|<\delta$. We have that 
	\begin{align*}
	\int_{Q}a(x)|u_n|^{r(x)}\diff x&\leq 2|a|_{L^{\frac{p^\ast(x)}{p^\ast(x)-r(x)}}(Q)}\big||u_n|^{r}\big|_{L^{\frac{p^\ast(x)}{r(x)}}(Q)}\notag\\
	&\leq 2|a|_{L^{\frac{p^\ast(x)}{p^\ast(x)-r(x)}}(Q)}\left[1+|u_n|^{r^+}_{L^{p^\ast(x)}(Q)}\right]\notag\\
	&\leq 2\left(\int_{Q}|a|^{\frac{p^\ast(x)}{p^\ast(x)-r(x)}}\diff x\right)^{\frac{1}{\left(\frac{p^\ast}{p^\ast-r}\right)^+}}\left(1+C^{r^+}\|u_n\|^{r^+}\right)\notag\\
	&<\epsilon,\ \forall n\in\mathbb{N}. 
	\end{align*}
Thus, we have proved $(1^0).$ Similarly, we have that	
	\begin{align*}
	\int_{B_R^c}a(x)|u_n|^{r(x)}\diff x&\leq 2|a|_{L^{\frac{p^\ast(x)}{p^\ast(x)-r(x)}}(B_R^c)}\big||u_n|^{r}\big|_{L^{\frac{p^\ast(x)}{r(x)}}(B_R^c)}\notag\\
	&\leq 2|a|_{L^{\frac{p^\ast(x)}{p^\ast(x)-r(x)}}(B_R^c)}\left[1+|u_n|^{r^+}_{L^{p^\ast(x)}(B_R^c)}\right]\notag\\
	&\leq 2\left(\int_{B_R^c}|a|^{\frac{p^\ast(x)}{p^\ast(x)-r(x)}}\diff x\right)^{\frac{1}{\left(\frac{p^\ast}{p^\ast-r}\right)^+}}\left(1+C^{r^+}\|u_n\|^{r^+}\right)\notag\\
	&<\epsilon,\ \forall n\in\mathbb{N}. 
	\end{align*}
That is, we have proved $(2^0)$. We now apply the Vitali convergence theorem to get that $\int_{ \mathbb{R}^N}a(x)|u_n|^{r(x)}\diff x\to 0$ and hence, the proof is complete. 
\end{proof}

\subsection*{Acknowledgements}
The first author was supported by Nong Lam University, Ho Chi Minh City (University project, 2018), the second author was supported by the Basic Science Research Program through the National Research Foundation of Korea (NRF) funded by the Ministry of Education (NRF-2016R1D1A1B03935866), and the third author was supported by a National Research Foundation of Korea Grant funded by
the Korean Government (MEST) (NRF-2018R1D1A3A03000678).

\end{document}